\newtheorem{Theorem}{Theorem}
\newtheorem{theorem}[Theorem]{Theorem}
\newtheorem{proposition}[Theorem]{Proposition}
\newtheorem{remark}[Theorem]{Remark}
\newtheorem{definition}[Theorem]{Definition}
\newtheorem{example}[Theorem]{Example}
\newcommand{\R}{{\mathbb R}}
\newcommand{\N}{{\mathbb N}}
\newcommand{\eps}{\varepsilon}
\newcommand{\vp}{\varphi}
\newcommand{\refcol}{\color[rgb]{0,0.35,0}}
\newcommand{\ecol}{\color[rgb]{0.5,0,0.5}}
\newcommand{\blue}{\color[rgb]{0,0,0.7}}
\newcommand{\red}{\color[rgb]{1,0,0}}
\newcommand{\black}{\color[rgb]{0,0,0}}
\def\c#1{{\mathcal #1}}
\def\ID{I^{\c{D}}}
\def\IN{I^{\c{N}}}
\def\IDr{\tilde I^{\c{D}}}
\def\INr{\tilde I^{\c{N}}}
\def\m#1{{\mathfrak #1}}
\font\basic=cmr10
\begin{document}

\title[Necessary and sufficient conditions]{Necessary and sufficient conditions
    \\ for one-dimensional variational problems 
    \\ with applications to elasticity}

\author{Pavol Quittner} 
\address{Department of Applied Mathematics and Statistics, Comenius
University, Mlynsk\'a dolina, 84248 Bratislava, Slovakia} 
\email{quittner@fmph.uniba.sk}
\keywords{Minimizer; natural boundary conditions; conjugate points; 
   field of extremals; elastica}
\subjclass{49K05, 74K10, 74G65, 34B15}{}

\date{}

\begin{abstract}
This paper deals with necessary and sufficient conditions
for weak and strong minimizers of functionals 
$\Phi(u)=\int_a^b f(x,u(x),u'(x))\,dx$, where $u\in C^1([a,b],{\mathbb R}^N)$.
We first derive conditions which are simpler than the known ones,
and then apply them to several particular problems,
including stability problems in the elasticity theory.
In particular, we solve some open problems in [A. Majumdar, A. Raisch:
Stability of twisted rods, helices and buckling solutions in three
dimensions, Nonlinearity 27 (2014), 2841--2867]
by finding optimal conditions for the stability of a naturally straight 
Kirchhoff rod under various types of endpoint constraints.
\end{abstract}

\maketitle

%-----------

\section{Introduction}
\label{intro}

This paper deals with necessary and sufficient conditions for 
local minimizers of one-di\-men\-sion\-al variational problems
for vector-valued functions. We consider the functional 
\begin{equation} \label{Phi}
 \Phi:C^1([a,b],\R^N)\to\R:u\mapsto\int_a^b f(x,u(x),u'(x))\,dx, 
\end{equation}
where $-\infty<a<b<\infty$, $u=(u_1,u_2,\dots,u_N)$, and the Lagrangian%
\footnote{As in \cite[pp.~11--12]{GH},   
by $u$ we denote both the functions $[a,b]\to\R^N$ and the independent variable in $\R^N$,
and by $p$ we denote the last argument of $f$;
see also similar notation $L(t,x(t),\dot x(t))$ vs.~$L(t,x,v)$ in \cite{Zei}, for example.}
$$f:[a,b]\times\R^N\times\R^N\to\R:(x,u,p)\mapsto f(x,u,p)$$
is sufficiently smooth
($f\in C^3$ or $f\in C^2$).
We also fix a function $u^0\in C^1([a,b],\R^N)$ and
(possibly empty) subsets $\ID_a,\ID_b$ of the index set $I:=\{1,2,\dots,N\}$,
and we look for conditions guaranteeing that $u^0$ is a local minimizer of $\Phi$
in the set
\begin{equation} \label{M}
 \c{M}:=\{u\in C^1([a,b],\R^N): (u_i-u_i^0)(a)=0 \hbox{ for }i\in \ID_a,\
(u_i-u_i^0)(b)=0 \hbox{ for }i\in \ID_b\}.
\end{equation}
This means that at $x=a$ we consider Dirichlet endpoint constraints for the components $u_i$ with $i\in\ID_a$,
while the endpoints of the remaining components $u_j$ with $j\in I\setminus\ID_a$ are free;
similarly for $x=b$.
It is well known (see Proposition~\ref{prop-NBC}) 
that if $u^0$ is a local minimizer of this problem, then $u^0$ has to satisfy
the natural boundary conditions
$$
\frac{\partial f}{\partial p_j}(a,u^0(a),(u^0)'(a))=0\ \hbox{ for }\ j\notin \ID_a\quad\hbox{ and }\quad
\frac{\partial f}{\partial p_j}(b,u^0(b),(u^0)'(b))=0\ \hbox{ for }\ j\notin \ID_b.
$$

We say that $u^0$ is a weak (or strong, resp.) local minimizer
if there exists $\eps>0$ such that $\Phi(u^0)\leq\Phi(u)$ for any
$u\in\c{M}$ satisfying $\|u-u^0\|_{C^1}<\eps$ (or $\|u-u^0\|_{C}<\eps$, resp.),
where $\|\cdot\|_{C^1}$ and $\|\cdot\|_C$ are the usual norms in $C^1$ and $C$,
respectively (see Definition~\ref{def-ws} and the subsequent comments for more
details).
If $u^0$ is a steady state of a mechanical system with potential energy $\Phi$,
and $u^0$ is a weak (or strong) local minimizer of $\Phi$,
then $u^0$ is stable with respect to perturbations which are small
in $C^1$ (or $C$), respectively. On the other hand, if $u^0$ is not a minimizer,
then $u^0$ is unstable.

If $\ID_a=\ID_b=I$, i.e.~if one considers the 
Dirichlet endpoint constraints for all components and both ends, then necessary
and sufficient conditions for $u^0$ to be a minimizer
belong to the classical results in the calculus of variations, see \cite{GF,Ces,GH}, for example.
They are based on the Jacobi theory (conjugate points)
or the Weierstrass theory (field of extremals and excess function).
In the general case such conditions are also known
(see \cite{Zei, Zez93} and the references therein, and cf.~also \cite{Zez97});
however, they use the notion of a \emph{coupled point}
which is more complicated than the classical notion of a \emph{conjugate point}.
This might be the reason why --- as far as the author is aware --- 
that general theory has not yet been
applied in the elasticity theory, for example.
In the scalar case,
another approach to problems with variable endpoints 
(and a special class of Lagrangians) can be found in \cite{Man}
but the conditions there are even more complicated than those in \cite{Zei, Zez93}.
Reference \cite{Man} has been cited by several papers dealing with problems
in the elasticity theory:
Some of those papers use the complicated theory in \cite{Man}
for scalar problems with special Lagrangians  
(see \cite{LG}, for example), some use various ad-hoc estimates
to obtain at least partial results in the vector-valued case
(when the theory in \cite{Man} does not seem to apply,
see \cite{MR}, for example)
and some refrain from considering variable endpoints
because of the complexity of the theory in \cite{Man},
see \cite{BB}, for example, where the authors write:
``\dots the application of the conjugate point test with nonclamped 
ends is a delicate issue \dots''.
Difficulties arising in a scalar problem with variable endpoints
have also been analyzed in \cite{OP}, for example.

The main purpose of this paper is to derive simple conditions 
for $u^0$ to be a minimizer, and to show how they can be applied 
to particular problems. 

In Section~\ref{sec-Jacobi} we derive necessary and sufficient conditions 
for weak minimizers by modifying the Jacobi theory (see Theorem~\ref{thm-Jacobi}
and also Remark~\ref{rem-ZZ} for the comparison of our conditions
with those in \cite{Zei, Zez93}).
In Section~\ref{sec-twist} we use the results from Section~\ref{sec-Jacobi}
to find optimal conditions for the stability of a naturally straight Kirchhoff rod
under various types of endpoint constraints.
The reasons for this particular application are the following:

\vskip2mm
$\bullet$ We show that our general results can easily be applied to 
vector-valued problems in the elasticity theory.

$\bullet$ We solve some open problems (and correct an erroneous result) in \cite{MR}.
 
$\bullet$ We show how the choice of endpoint constraints influences the stability of the rod.

\vskip2mm
In Section~\ref{sec-field} we use the Weierstrass theory
to derive conditions for weak, strong and global
minimizers, see Theorem~\ref{thm-field}.
In this case we restrict our applications in Section~\ref{sec-se} to the scalar case
$N=1$.
The reason for this restriction is the following:
If $N=1$ and the Lagragian $f$ is independent of its first variable $x$,
then the phase plane analysis of the corresponding Du Bois-Reymond equation  
yields a very simple and efficient way
to prove (or disprove) the existence of a suitable field of extremals;
hence it is sufficient to verify the nonnegativity of the excess function
in order to check our conditions.
In particular, this approach does not require the verification of
sufficient conditions based on the Jacobi theory 
and it can be used even if we do not know an explicit formula for $u^0$.
In Section~\ref{sec-se} we first determine the stability
of a planar weightless inextensible and unshearable rod (see Example~\ref{ex-LG}).
This problem has already been analyzed in \cite{LG, B20}, for example, 
but our analysis is simpler than that in \cite{LG} and more complete than that in \cite{B20}.
The notions of weak and strong minimizers are equivalent for functionals $\Phi$ 
in Section~\ref{sec-twist} and Example~\ref{ex-LG}
(see Remark~\ref{rem-twist}(vi) and Proposition~\ref{prop-ws}, respectively).
To illustrate various interesting features of minimizers in a more general case
and demonstrate the applicability of our theory,
in Example~\ref{ex-well} we consider Lagrangians of the form
$f(u,p)=u^2+g(p)$, where $g$ is a double-well function.
In particular, the corresponding functional can possess both
strong (even global) minimizers and minimizers which are weak but not strong.

Some of our results in the scalar case $N=1$ have been obtained 
in the Master thesis \cite{Bed}.

%--------------------------------------------
\section{Preliminaries} 
\label{sec-prelim}

Throughout this paper we will use the symbols
$\Phi,f,u^0,a,b,N,I,\ID_a$ and $\ID_b$ introduced in the Introduction.
The partial derivatives of $f$ will be denoted by
$f_x,f_{u_i},f_{p_i},f_{p_ip_j},\dots$.

Given $\m{f}\in\{f,f_x,f_{u_i},f_{p_i},f_{p_ip_j},\dots\}$,
we will use the notation%
\footnote{The superscript $0$ in $\m{f}^0$ denotes evaluation of $\m{f}$ along the reference arc $u^0$;
cf.~similar notation $\hat L(t)=L(t,\hat x(t),\dot{\hat x}(t))$ in \cite{Zei}
or $\overline{\m{f}}(x)=\m{f}(x,u(x),u'(x))$ in \cite[formulas (30), (39) in
Section~2.3, pp.~114--116]{GH}. 
The advantages of our notation will become evident in Section~\ref{sec-se}:
See the notation introduced in Theorem~\ref{thm-field1}.}
$$\m{f}^0(x):=\m{f}(x,u^0(x),(u^0)'(x)).$$

If $x\in\{a,b\}$ and $W$ is a space of functions $[a,b]\to\R^N$, then we set 
$$\begin{aligned}
\IN_x &:=I\setminus \ID_x, \\
\R^N_{\c{D},x} &:=\{\xi\in\R^N:\xi_i=0\hbox{ for }i\in\ID_x\}, \\
\R^N_{\c{N},x} &:=\{\xi\in\R^N:\xi_i=0\hbox{ for }i\in\IN_x\}, \\
W_{\c{D},x} &:=\{v\in W:v(x)\in\R^N_{\c{D},x}\}, \\
W_\c{D} &:=W_{\c{D},a}\cap W_{\c{D},b}.
\end{aligned}$$
In particular, if $W=C^1=C^1([a,b],\R^N)$, then
\begin{equation} \label{C1D}
C^1_\c{D} = \{v\in C^1([a,b],\R^N): v_i(a)=0 \hbox{ for }i\in \ID_a,\ v_i(b)=0 \hbox{ for }i\in \ID_b\}
\end{equation}
is the space of $C^1$-test functions.
(Notice that the set $\c{M}$ in \eqref{M} satisfies $\c{M}=u^0+C^1_\c{D}$.)

The norm in a general Banach space $X$ will be denoted by $\|\cdot\|_X$;
the norm in $W^{1,2}$ will also be denoted by $\|\cdot\|_{1,2}$.
In particular, if $X=C^1=C^1([a,b],\R^N)$ or $X=C=C([a,b],\R^N)$, 
then $\|u\|_{C^1}=\max_{x\in[a,b]}|u(x)|+\max_{x\in[a,b]}|u'(x)|$
or $\|u\|_C=\max_{x\in[a,b]}|u(x)|$,
respectively, where $|u(x)|$ denotes the Euclidean norm of $u(x)\in\R^N$.
We also set $B_\eps:=\{\xi\in\R^N:|\xi|<\eps\}$.

We will assume that $u^0$ is a critical point of $\Phi$
in the set $u^0+C^1_\c{D}$, 
i.e.~$\Phi'(u^0)h=0$ for any test function $h\in C^1_\c{D}$, 
where $\Phi'$ denotes the Fr\'echet derivative of $\Phi$.
The following proposition is well known, but for the reader's convenience we
explain the idea of its proof in the Appendix.

\begin{proposition} \label{prop-NBC}
Let $f\in C^1$ and let $u^0$ be a critical point of $\Phi$ in $u^0+C^1_\c{D}$.
Then $u^0$
is an extremal (i.e.~it satisfies the Euler
equations $\frac{d}{dx}(f^0_{p_i})=f^0_{u_i}$, $i=1,2,\dots,N$), 
and $u^0$ also has to satisfy the natural boundary conditions
\begin{equation} \label{NBC}
f^0_{p_j}(a)=0\ \hbox{ for }\ j\in \IN_a\quad\hbox{ and }\quad
f^0_{p_j}(b)=0\ \hbox{ for }\ j\in \IN_b.
\end{equation}
If $f_{p_i}\in C^1$ for $i=1,2,\dots,N$, and the strengthened Legendre condition 
\begin{equation} \label{f-conv}
(\exists c^0>0)\qquad
 \sum_{i,j=1}^N f^0_{p_ip_j}(x)\xi_i\xi_j\geq c^0|\xi|^2,\quad \xi\in\R^N,\ x\in[a,b],
\end{equation}
is true, then $u^0\in C^2$.
\end{proposition}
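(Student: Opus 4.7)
The plan is to extract, from the vanishing of the Fréchet derivative
\begin{equation*}
\Phi'(u^0)h = \int_a^b \sum_{i=1}^N \bigl(f^0_{u_i}(x)\,h_i(x) + f^0_{p_i}(x)\,h_i'(x)\bigr)\,dx = 0, \qquad h\in C^1_\c{D},
\end{equation*}
first the Euler equations, then the natural boundary conditions, and finally to bootstrap the regularity of $u^0$ via the implicit function theorem.

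First I would restrict to test functions $h\in C^1([a,b],\R^N)$ that vanish at both endpoints (these lie in $C^1_\c{D}$ regardless of what $\ID_a,\ID_b$ are). Setting $F_i(x):=\int_a^x f^0_{u_i}(t)\,dt$ and integrating by parts in the $f^0_{u_i}h_i$-term (whose boundary contribution now vanishes) turns the identity, componentwise, into $\int_a^b (f^0_{p_i}-F_i)\,h_i'\,dx = 0$ for every scalar test $h_i\in C^1_0([a,b])$. The classical Du Bois-Reymond lemma then gives $f^0_{p_i}-F_i\equiv$ const on $[a,b]$; since $F_i\in C^1$, this says that $f^0_{p_i}$ is continuously differentiable with $\frac{d}{dx}f^0_{p_i}=f^0_{u_i}$, i.e.\ the Euler equations.

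Second, armed with the Euler equations I would revisit the full variational identity for arbitrary $h\in C^1_\c{D}$ and integrate the $f^0_{p_i}h_i'$-term by parts; the interior contribution cancels against $f^0_{u_i}h_i$ thanks to the Euler equations, leaving only the boundary term
\begin{equation*}
0 = \sum_{i=1}^N \Bigl(f^0_{p_i}(b)\,h_i(b) - f^0_{p_i}(a)\,h_i(a)\Bigr).
\end{equation*}
For each fixed $j\in\IN_a$ I would then pick an $h\in C^1_\c{D}$ localised near $a$ with $h_j(a)\neq 0$ and all remaining endpoint values zero; this is admissible precisely because $j\notin\ID_a$, and it isolates $f^0_{p_j}(a)=0$. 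The mirror-image choice at $b$ yields the second half of \eqref{NBC}.

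For the $C^2$-regularity I would apply the implicit function theorem to the map $p\mapsto f_p(x,u,p)$: by \eqref{f-conv} together with continuity of $f_{p_ip_j}$, its Jacobian $(f_{p_ip_j})$ is positive definite (hence invertible) in a neighbourhood of $\{(x,u^0(x),(u^0)'(x)):x\in[a,b]\}$, so locally one can solve uniquely for $p$ as a $C^1$-function of $(x,u,f_p)$. Applied to the now-$C^1$ curve $x\mapsto (x,u^0(x),f^0_p(x))$, this representation exhibits $(u^0)'$ as a composition of $C^1$-maps, upgrading $u^0$ to $C^2$. The main subtlety in the whole argument is the first step: since only $f\in C^1$ is assumed, $(u^0)'$ is a priori merely continuous, so one cannot integrate the $f^0_{p_i}h_i'$-term by parts directly --- the Du Bois-Reymond detour (moving the derivative off $h_i$ via $F_i$ instead) is what earns us the pointwise Euler equations in the first place; once they are secured, Steps~2 and~3 are routine.
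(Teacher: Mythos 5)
Your proposal is correct and follows essentially the same route as the paper's own proof: integrating by parts against the antiderivative of $f^0_{u_i}$, invoking the Du Bois-Reymond lemma to get $f^0_{p_i}=F_i+\mathrm{const}$ (hence the Euler equations and the $C^1$-regularity of $f^0_{p_i}$), extracting the natural boundary conditions from the surviving boundary terms with suitably chosen test functions, and applying the Implicit Function Theorem to $p\mapsto f_p(x,u,p)$ for the $C^2$-upgrade. The only difference is cosmetic: the paper writes out the scalar case $N=1$ with $\ID_a=\emptyset$, $\IN_b=\emptyset$ and remarks that the argument generalizes, whereas you carry out the general vector-valued case directly.
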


It is known that the Legendre condition (i.e.~condition~\eqref{f-conv}
with $c^0=0$) is necessary for $u^0$ to be a minimizer, but even the strengthened Legendre
condition is not sufficient, in general.
Assuming that 
\begin{equation} \label{ass1}
\hbox{$f\in C^3$ satifies \eqref{f-conv}, where
$u^0\in C^1([a,b],\R^N)$ is an extremal satisfying \eqref{NBC},} 
\end{equation}
and denoting $\sum_k=\sum_{k=1}^N$, we set
\begin{equation} \label{psi}
\Psi(h):=\int_a^b\m{F}(x,h(x),h'(x))\,dx, \quad h\in W^{1,2}([a,b],\R^N),
\end{equation}
where
\begin{equation} \label{gLagr}
 \m{F}=\m{F}(x,u,p):=\sum_{i,j}\Bigl(f^0_{p_ip_j}(x)p_ip_j+f^0_{p_iu_j}(x)p_iu_j
+f^0_{u_ip_j}(x)u_ip_j+f^0_{u_iu_j}(x)u_iu_j\Bigr).
\end{equation}
If $h\in C^1$,  
then $\Psi(h)=\Phi''(u^0)(h,h)$, i.e.~$\Psi$ is the second variation of
$\Phi$ at $u^0$.
In addition, if $h\in C^2$, then integration by parts yields
\begin{equation} \label{PsiAB}
 \Psi(h)=\int_a^b\sum_i (\c{A}_ih)h_i\,dx +\sum_i (\c{B}_ih)h_i\Big|_a^b,
\end{equation}
where
\begin{equation} \label{BA}
\c{A}_ih:=-\frac{d}{dx}(\c{B}_ih)+\c{C}_ih,\quad
 \c{B}_ih:=\sum_j\Bigl(f^0_{p_ip_j}h_j'+f^0_{p_iu_j}h_j\Bigr),\quad 
\c{C}_ih:=\sum_j\Bigl(f^0_{u_ip_j}h_j'+f^0_{u_iu_j}h_j\Bigr).
\end{equation}
Set also
$$\c{A}h:=(\c{A}_1h,\dots,\c{A}_Nh),
\ \c{B}h:=(\c{B}_1h,\dots\c{B}_Nh),
\ f_p:=(f_{p_1},\dots,f_{p_N}),\ f_u=(f_{u_1},\dots,f_{u_N}).$$

The (vector-valued) second-order linear differential equation $\c{A}h=0$ 
is called the {\it Jacobi} equation (for $\Phi$ and $u^0$):
It will play a fundamental role 
in the study of positive definiteness of $\Psi$.
Notice also that the Jacobi equation is the Euler equation for functional
$\Psi$.
More precisely,
by using the symmetry relations
$f_{p_ip_j}=f_{p_jp_i}$, $f_{p_iu_j}=f_{u_jp_i}$ and
$f_{u_iu_j}=f_{u_ju_i}$ we obtain
\begin{equation} \label{gBC}
\m{F}_{p_i}(x,h(x),h'(x))=2\c{B}_ih(x), \quad
\m{F}_{u_i}(x,h(x),h'(x))=2\c{C}_ih(x),
\end{equation}
hence
\begin{equation} \label{Jacobi-Euler}
2\c{A}_ih(x)=-\frac{d}{dx}\m{F}_{p_i}(x,h(x),h'(x))+\m{F}_{u_i}(x,h(x),h'(x)).
\end{equation}
Notice also that, given $h,w\in W^{1,2}$,
\eqref{gBC} and the symmetry of the second-order derivatives of $f$
mentioned above imply
\begin{equation} \label{Psiprime}
\begin{aligned}
\Psi'(h)w &=\int_a^b\sum_i\bigl(\m{F}_{p_i}(x,h(x),h'(x))w_i'(x)+\m{F}_{u_i}(x,h(x),h'(x))w_i(x)\bigr)\,dx \\
&=2\int_a^b\sum_i\bigl(\c{B}_ih\cdot w_i'+\c{C}_ih\cdot w_i\bigr)\,dx 
 =2\int_a^b\sum_i\bigl(\c{B}_iw\cdot h_i'+\c{C}_iw\cdot h_i\bigr)\,dx=\Psi'(w)h.
\end{aligned}
\end{equation}

\begin{definition} \label{def-ws} \rm
Let $w\in \c{M}$, where $\c{M}$ is a subset of $C^1([a,b],\R^N)$.
The function $w$ is called
a {\it weak} 
or {\it strong local} {\it minimizer} in $\c{M}$
if there exists $\eps>0$ such that $\Phi(v)\geq\Phi(w)$
for any $v\in \c{M}$ satisfying
$\|v-w\|_{C^1}<\eps$ or $\|v-w\|_C<\eps$, respectively.

Let $w\in \c{N}$, where $\c{N}$ is a subset of $W^{1,2}([a,b],\R^N)$.
The function $w$ is called
a {\it local minimizer} in $\c{N}$
if there exists $\eps>0$ such that $\Phi(v)\geq\Phi(w)$
for any $v\in \c{N}$ satisfying
$\|v\|_{1,2}<\eps$.

If the inequalities $\Phi(v)\geq\Phi(w)$ in the definitions above
are strict for $v\ne w$, then the minimizer $w$ is called {\it strict}.
\end{definition}

Since the adjectives {\it weak} and {\it strong} are not meaningful
in the case of global minimizers, 
we often omit the word ``local'' in the notions of weak and strong local minimizers.
Each strong minimizer is a weak minimizer but the opposite is not true, in
general. For example, if $N=1$ and $f(x,u,p)=p^2+p^3$, then $u^0\equiv0$
is a weak but not strong minimizer of $\Phi$ in $u^0+C^1_\c{D}$
for any choice of $a,b,\ID_a$ and $\ID_b$
(see also Example~\ref{ex-well} for a less trivial example).
On the other hand, the following Proposition~\ref{prop-ws} 
and Remark~\ref{rem-twist}(vi)
show that in some cases the notions
of weak and strong minimizers are equivalent. 
The choice of the class of Lagrangians 
in Proposition~\ref{prop-ws}
is motivated by Example~\ref{ex-LG}, where we consider the stability
of a planar rod. Proposition~\ref{prop-ws} is true for any choice of
$a,b,\ID_a$ and $\ID_b$; its proof is postponed to the Appendix.

\begin{proposition} \label{prop-ws}
Let $N=1$ 
and $f(x,u,p)=(p-K)^2+g(u)$, where $K\in\R$ and $g\in C^1(\R)$. 
If $u^0\in C^1$ is a weak minimizer, 
then it is a strong minimizer. 
\end{proposition}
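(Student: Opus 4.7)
My plan has three main steps: a key identity, the direct method, and regularity of the minimizer.

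\textbf{Step 1.} For $v\in C^1_{\c D}$, integrating $\int_a^b 2((u^0)'-K)v'\,dx$ by parts and using the Euler equation $2(u^0)''=g'(u^0)$ together with the Dirichlet conditions $v=0$ on $\c D$ and the natural boundary conditions $(u^0)'=K$ on $\c N$ kills the boundary contribution; one obtains
\begin{equation*}
\tilde F(v):=\Phi(u^0+v)-\Phi(u^0)=\int_a^b (v'(x))^2\,dx+\int_a^b G(u^0(x),v(x))\,dx,
\end{equation*}
where $G(u,v):=g(u+v)-g(u)-g'(u)v=v\int_0^1\bigl[g'(u+tv)-g'(u)\bigr]\,dt$. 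Continuity of $g'$ on the compact image of $u^0$ provides a modulus $\omega$ with $\omega(0^+)=0$ such that $|G(u^0(x),v(x))|\le\omega(\|v\|_C)|v(x)|$. Hence $\tilde F$ extends continuously to $W^{1,2}_{\c D}$ and one has the budget inequality $\|v'\|_{L^2}^2\le\tilde F(v)+\omega(\|v\|_C)\|v\|_C(b-a)$.

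\textbf{Step 2.} I argue by contradiction: assume $u^0$ is not a strong minimizer, so $M_\delta:=\inf\{\tilde F(v):v\in W^{1,2}_{\c D},\ \|v\|_C\le\delta\}<0$ for every small $\delta>0$. The budget inequality keeps minimising sequences bounded in $W^{1,2}$, so by Rellich compactness, weak lower semicontinuity of $\int(v')^2$, and continuity of $v\mapsto\int G(u^0,v)$ under uniform convergence, $M_\delta$ is attained by some $\bar v=\bar v_\delta\in W^{1,2}_{\c D}$.

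\textbf{Step 3.} I next show $\bar v\in C^2([a,b])$ with $\|\bar v\|_{C^1}\to 0$ as $\delta\to 0$. Variations with support in $\{|\bar v|<\delta\}$ yield the Euler equation $2\bar v''=g'(u^0+\bar v)-g'(u^0)$ there. At a contact point $x^*$ where $|\bar v(x^*)|=\delta$ (including possibly $x^*\in\{a,b\}$ when that endpoint is in $\c N$), admissibility forces $\operatorname{sgn}(\bar v(x^*))h(x^*)\le0$, and the resulting one-sided stationarity $\tilde F'(\bar v)h\ge0$ combines with the one-sided derivative inequalities that $|\bar v|$ attaining its maximum $\delta$ at $x^*$ imposes on $\bar v'(x^{*-})$ and $\bar v'(x^{*+})$ (these two sets of sign constraints go in opposite directions) to force $\bar v'(x^{*-})=\bar v'(x^{*+})=0$. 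Hence $\bar v$ is $C^1$ at every contact point, and by the Euler equation it is $C^2$ globally; the natural condition $\bar v'=0$ holds at free endpoints (either automatically when no contact, or via the same one-sided argument when contact occurs there). Since $\|\bar v''\|_\infty\le\omega(\delta)$ and at least one point $x_0\in[a,b]$ has $\bar v'(x_0)=0$ (a contact point, a free endpoint, or by Rolle when both ends are Dirichlet), integration gives $\|\bar v'\|_\infty\le(b-a)\omega(\delta)\to 0$. Thus $\bar v\in C^1_{\c D}$ with $\|\bar v\|_{C^1}\to 0$, and for $\delta$ small the weak minimizer hypothesis gives $\tilde F(\bar v)\ge0$, contradicting $\tilde F(\bar v)=M_\delta<0$.

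The main anticipated obstacle is the contact-point analysis in Step 3: the minimizer must be shown to develop no corners at interior contact points, which I handle by observing that the Lagrange-multiplier-type one-sided stationarity and the one-sided extremality of $|\bar v|$ impose opposite sign constraints on $\bar v'(x^{*+})-\bar v'(x^{*-})$, forcing $\bar v'(x^*)=0$. Once this is established, the remaining $\bar v'$-estimates reduce to elementary ODE bounds from $\|\bar v''\|_\infty\le\omega(\delta)$.
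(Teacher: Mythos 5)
Your argument is correct, but it takes a genuinely different route from the paper's. The paper follows the Brezis--Nirenberg ``$H^1$ versus $C^1$'' scheme: it first minimizes $\Phi$ on small \emph{$W^{1,2}$-balls} $\{\|u-u^0\|_{1,2}\le r_k\}$, obtains a Lagrange multiplier $\lambda_k\le0$ and the perturbed Euler equation $2(1-\lambda_k)(u^k)''=g'(u^k)-2\lambda_k\bigl((u^0)''+u^k-u^0\bigr)$, and reads off a uniform $C^2$ bound that upgrades $u^k\to u^0$ in $W^{1,2}$ to convergence in $C^1$ (so $u^0$ is a $W^{1,2}$-local minimizer); a second step, essentially your Step~1 identity, then shows that $C$-small negative directions would have to be $W^{1,2}$-small. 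You instead minimize directly on the \emph{$C$-ball} $\{\|v\|_C\le\delta\}$, which turns the problem into a one-dimensional bilateral obstacle problem, and you get the needed $C^1$-smallness of the constrained minimizer from obstacle-problem regularity. Your central observation is exactly right: the complementarity sign of the measure $T:=-2\bar v''+\phi$ on the contact set forces $\bar v'(x^{*+})-\bar v'(x^{*-})\ge0$ at an upper contact point, while extremality of $\bar v$ there forces $\bar v'(x^{*-})\ge0\ge\bar v'(x^{*+})$, so both one-sided derivatives vanish; this collapses the paper's two-step contradiction into one. The price is some free-boundary bookkeeping that your sketch compresses: you must first establish that $T$ is a locally finite signed measure (so that $\bar v'\in BV$ and the one-sided limits $\bar v'(x^{*\pm})$ exist), treat free endpoints in contact separately (there a difference-quotient form of the one-sided stationarity, $n\bigl(\bar v(a+1/n)-\bar v(a)\bigr)\to0$ together with the semiconvexity of $\bar v$ near $a$, is the cleanest way to get $\bar v'(a^+)=0$), and note that $\bar v$ is in general only $C^{1,1}$, not $C^2$, across the free boundary --- though only the bound $\|\bar v'\|_\infty\le(b-a)\,\omega(\delta)$, obtained by integrating $|\bar v''|\le\omega(\delta)$ from a zero of $\bar v'$ on each component of the non-contact set, is actually used. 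In exchange, the paper's smooth spherical constraint buys classical Lagrange multipliers and plain ODE regularity with no contact-set analysis; your constraint buys directness and dispenses with the intermediate $W^{1,2}$-minimality step.
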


The following proposition is a consequence of well known facts (see \cite{GH, Ces}, for example). 
The assumptions in that proposition are much stronger than necessary,
but the proposition will be sufficient for our purposes
(see Remark~\ref{rem-twist}(vi), Section~\ref{sec-se} and the proof of Proposition~\ref{prop-lambda}).

\begin{proposition} \label{thm-Euler}
\strut\hbox{\rm(i)} 
Let $f\in C^k$, $k\geq2$.

If $u^0\in C^1$ is a critical point of $\Phi$ 
in $u^0+C^1_\c{D}$ and \eqref{f-conv} is true,
then $u^0\in C^k$  and $u^0$ satisfies the Du Bois-Reymond equation
\begin{equation} \label{eq-dBR}
\frac d{dx}(f^0-(u^0)'\cdot f^0_p) = f^0_x \quad\hbox{ in }[a,b]. 
\end{equation}

Conversely,
if $u^0\in C^2$ satisfies \eqref{eq-dBR} and $(u^0)'\ne0$ a.e., then $u^0$ is an extremal.

\strut\hbox{\rm(ii)}
Let $f\in C^1$ satisfy the growth condition 
$(1+|p|)|f_p|+|f_u|\leq M(|u|)(1+|p|)^2$,
where $M:[0,\infty)\to[0,\infty)$ is nondecreasing.
Then $\Phi\in C^1(W^{1,2})$. In addition,
if $u^0\in W^{1,2}$ is a local minimizer of $\Phi$ in $u^0+W^{1,2}_\c{D}$,
then there exists $C\in\R^N$ such that 
$$ 
 f^0_p(x) =\int_a^x f^0_u(\xi)\,d\xi+C \quad\hbox{for a.e. }x\in[a,b].
$$ 
\end{proposition}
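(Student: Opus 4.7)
The plan is to handle the three separate claims one at a time, leaning on the fundamental lemma of the calculus of variations and the implicit function theorem to upgrade regularity.

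For the regularity statement in (i), I would start by invoking Proposition~\ref{prop-NBC}, which already gives $u^0\in C^2$ together with the Euler equations $\frac{d}{dx}(f^0_{p_i})=f^0_{u_i}$. To obtain $u^0\in C^k$, I would bootstrap as follows. The strengthened Legendre condition~\eqref{f-conv} makes the matrix $(f_{p_ip_j}(x,u^0(x),p))_{i,j}$ uniformly positive definite in a neighbourhood of $p=(u^0)'(x)$; hence the map $p\mapsto f_p(x,u^0(x),p)$ is a local $C^{k-1}$ diffeomorphism by the implicit function theorem, and its inverse $\Psi(x,q)$ is $C^{k-1}$ in both variables. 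Since $(u^0)'(x)=\Psi(x,f^0_p(x))$ and, by the integrated Euler equation, $f^0_p(x)=\int_a^x f^0_u(\xi)\,d\xi+C$, an induction on the regularity order of the right-hand side yields $(u^0)'\in C^{k-1}$ and thus $u^0\in C^k$.

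Still in (i), the Du Bois-Reymond identity~\eqref{eq-dBR} is obtained by a direct computation: differentiating $f^0-(u^0)'\cdot f^0_p$ gives
\begin{equation*}
\frac{d}{dx}\bigl(f^0-(u^0)'\cdot f^0_p\bigr)
=f^0_x+(u^0)'\cdot f^0_u+(u^0)''\cdot f^0_p-(u^0)''\cdot f^0_p-(u^0)'\cdot\frac{d}{dx}(f^0_p)
=f^0_x,
\end{equation*}
where the Euler equations are used in the last step. Conversely, if $u^0\in C^2$ satisfies \eqref{eq-dBR}, the same computation run backwards produces the identity $(u^0)'\cdot\bigl(\frac{d}{dx}(f^0_p)-f^0_u\bigr)=0$ on $[a,b]$. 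In the scalar case this scalar relation, combined with $(u^0)'\ne0$ a.e. and the continuity of $\frac{d}{dx}(f^0_p)-f^0_u$, forces the Euler equation to hold everywhere.

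For (ii), the $C^1$-regularity of $\Phi$ on $W^{1,2}$ follows from standard Nemytskii-operator estimates: the prescribed quadratic growth of $f_p,f_u$ in $p$ implies that $v\mapsto f_p(\cdot,v,v')$ and $v\mapsto f_u(\cdot,v,v')$ are continuous $W^{1,2}\to L^2$ (via Sobolev embedding $W^{1,2}\hookrightarrow C$ controlling $M(|u|)$ and via $v'\in L^2$), and the standard derivative formula $\Phi'(v)h=\int_a^b\bigl(f_p(\cdot,v,v')\cdot h'+f_u(\cdot,v,v')\cdot h\bigr)\,dx$ holds. If $u^0$ is a local minimizer in $u^0+W^{1,2}_\c{D}$, then $\Phi'(u^0)h=0$ for every $h\in W^{1,2}_\c{D}$, and in particular for every $h\in C_c^\infty((a,b),\R^N)\subset W^{1,2}_\c{D}$. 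Hence
\begin{equation*}
\int_a^b\bigl(f^0_p\cdot h'+f^0_u\cdot h\bigr)\,dx=0\qquad\text{for all } h\in C_c^\infty((a,b),\R^N).
\end{equation*}
Writing $F(x):=\int_a^x f^0_u(\xi)\,d\xi$ and integrating by parts in the second summand, this becomes $\int_a^b(f^0_p-F)\cdot h'\,dx=0$, whence the classical Du Bois-Reymond lemma (applied componentwise) yields $f^0_p(x)-F(x)=C$ a.e. for some constant $C\in\R^N$, which is exactly the claimed equation.

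The main technical obstacle is the bootstrap argument for (i): making the implicit function theorem apply uniformly on $[a,b]$ requires combining~\eqref{f-conv} with the $C^2$-regularity furnished by Proposition~\ref{prop-NBC} to obtain a single global $C^{k-1}$ inverse. Once this step is in place, the remaining computations and the Du Bois-Reymond lemma in (ii) are routine.
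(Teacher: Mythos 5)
The paper does not actually prove this proposition; it is stated as ``a consequence of well known facts'' with references to \cite{GH,Ces}, so there is nothing to compare line by line. Your argument is the standard one and is essentially correct: Proposition~\ref{prop-NBC} plus the implicit-function-theorem bootstrap for the $C^k$ regularity, a direct differentiation for \eqref{eq-dBR}, and the Du Bois-Reymond lemma applied to $\Phi'(u^0)h=0$ for part (ii). Two small remarks on (ii): with the stated growth condition $f_u(\cdot,v,v')$ has quadratic growth in $v'$ and so lands only in $L^1$, not $L^2$; this is harmless because it is paired with $h\in W^{1,2}\hookrightarrow C$, but the map you should call continuous into $L^2$ is only $v\mapsto f_p(\cdot,v,v')$. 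Also avoid reusing the symbol $\Psi$ for the inverse map, since the paper reserves it for the second variation.

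The one substantive point is the converse in (i). As you correctly observe, differentiating backwards yields only the single scalar identity $(u^0)'\cdot\bigl(\frac{d}{dx}(f^0_p)-f^0_u\bigr)=0$, which forces the Euler equations only when $N=1$. For $N\ge2$ the statement as literally written is false: with $f(x,u,p)=|p|^2$ and $u^0(x)=(\cos x,\sin x)$ one has $|(u^0)'|\equiv1$, so \eqref{eq-dBR} holds and $(u^0)'\ne0$ everywhere, yet $u^0$ is not an extremal. So your restriction to the scalar case is not a gap in your proof but a necessary one; the paper only ever invokes this converse in Section~\ref{sec-se}, where $N=1$, so the intended scope is the scalar case and your proof covers everything that is actually used.
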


%-------------------------------
\section{Jacobi theory}
\label{sec-Jacobi}

In this section we will prove necessary and sufficient conditions
for weak minimizers by modifying the classical Jacobi theory.
Throughout this section we assume \eqref{ass1}. 

The following proposition is well known,
but for the reader's convenience we provide its proof in the Appendix.

\begin{proposition} \label{prop-psi}
Assume \eqref{ass1} and let $\Psi$ be defined by \eqref{psi}.

(i) If $\Psi$ is positive definite in $W^{1,2}_\c{D}$, then
$u^0$ is a strict weak minimizer in $u^0+C^1_\c{D}$.

(ii) If $\Psi(h)<0$ for some $h\in W^{1,2}_\c{D}$, then
$u^0$ is not a weak minimizer in $u^0+C^1_\c{D}$.
\end{proposition}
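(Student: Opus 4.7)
The plan is to base both claims on the second-order Taylor expansion of $\Phi$ around the critical point $u^0$. Since $\Phi'(u^0)h=0$ for every $h\in C^1_\c{D}$ by \eqref{ass1} and since $f\in C^3$, Taylor's theorem applied pointwise in $x$ to $f(x,u^0+h,(u^0)'+h')$ yields, for $h\in C^1_\c{D}$,
\begin{equation*}
\Phi(u^0+h)-\Phi(u^0)=\tfrac12\Psi(h)+R(h),\qquad |R(h)|\le \omega(\|h\|_{C^1})\,\|h\|_{1,2}^2,
\end{equation*}
where $\omega(s)\to 0$ as $s\to 0^+$: the pointwise remainder is $O((|h|+|h'|)^3)$, and factoring out one $\|h\|_{C^1}$ leaves an integrand controlled by $\|h\|_{1,2}^2$.

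For (i), I read positive definiteness of $\Psi$ on $W^{1,2}_\c{D}$ as the coercivity estimate $\Psi(h)\ge c\|h\|_{1,2}^2$ for some $c>0$. Choosing $\eps>0$ so that $\omega(\eps)<c/4$, the display above gives
\begin{equation*}
\Phi(u^0+h)-\Phi(u^0)\ge \tfrac c4\|h\|_{1,2}^2>0\quad\text{for every }h\in C^1_\c{D}\setminus\{0\}\text{ with }\|h\|_{C^1}<\eps,
\end{equation*}
so $u^0$ is a strict weak minimizer. For (ii), starting from $h\in W^{1,2}_\c{D}$ with $\Psi(h)<0$, I first select $h_\ast\in C^1_\c{D}$ with $\Psi(h_\ast)<0$; this is possible by density of $C^1_\c{D}$ in $W^{1,2}_\c{D}$ (standard, since the Dirichlet constraints are only pointwise at two points) together with continuity of the quadratic form $\Psi$ on $W^{1,2}$. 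Evaluating the same Taylor expansion along $u^0+t h_\ast$ as $t\to 0^+$ produces
\begin{equation*}
\Phi(u^0+t h_\ast)-\Phi(u^0)=\tfrac{t^2}{2}\Psi(h_\ast)+o(t^2)<0
\end{equation*}
for all sufficiently small $t>0$, while $\|t h_\ast\|_{C^1}\to 0$; hence $u^0$ is not a weak minimizer.

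The main delicate point is the interpretation of ``positive definite'' in (i) as the coercivity estimate $\Psi(h)\ge c\|h\|_{1,2}^2$; without this upgrade the remainder $R(h)$ cannot be absorbed into $\Psi(h)$. In the present setting this is justified by a G\aa rding-type inequality coming from the strengthened Legendre condition \eqref{f-conv} combined with the compact embedding $W^{1,2}_\c{D}\hookrightarrow L^2$: a standard contradiction argument applied to a minimizing sequence of $\Psi(h)/\|h\|_{1,2}^2$ promotes pointwise positivity of $\Psi$ on $W^{1,2}_\c{D}$ to coercivity. The density statement used in (ii) is routine, via mollification combined with a small affine correction to restore the pointwise boundary values.
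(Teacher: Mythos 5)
Your proof is correct and follows essentially the same route as the paper's: a second-order Taylor expansion of $\Phi$ at the critical point $u^0$, with the remainder controlled by $\omega(\|h\|_{C^1})\,\|h\|_{1,2}^2$ (the paper phrases this via the Mean Value Theorem and the $C^1$-continuity of $u\mapsto\Phi''(u)$) for part (i), and density of $C^1_{\c{D}}$ in $W^{1,2}_{\c{D}}$ together with continuity of $\Psi$ for part (ii). Your only addition is the G\aa rding-type argument upgrading pointwise positivity of $\Psi$ to coercivity; the paper simply takes the coercive estimate $\Psi(h)\ge c\|h\|_{1,2}^2$ as the meaning of positive definiteness, exactly as you do.
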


We will consider the scalar case first.
Assume that
\begin{equation} \label{ass2}
\hbox{$h$ is a nontrivial solution of the Jacobi equation $\c{A}h=0$.}
\end{equation} 
Then the following classical result for problems with Dirichlet endpoint
constraints is well known.

\begin{theorem} \label{thm-Jacobi0}
Assume \eqref{ass1} with $N=1$ and \eqref{ass2}. 
Let $\IN_a=\IN_b=\emptyset$ and $h(a)=0$.

\strut\hbox{\rm(i)} If $h(y)=0$ for some $y\in(a,b)$, then $u^0$
is not a weak minimizer.

\strut\hbox{\rm(ii)} If $h(y)\ne0$ for any $y\in(a,b]$, then $u^0$
is a strict weak minimizer.
\end{theorem}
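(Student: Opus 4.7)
The overall strategy is to apply Proposition~\ref{prop-psi}: for part~(i) I will construct a test function in $W^{1,2}_\c{D}$ on which $\Psi<0$, while for part~(ii) I will establish positive definiteness of $\Psi$ on $W^{1,2}_\c{D}$ via the classical Jacobi substitution $v=hw$.

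For part~(i) the candidate is the truncation
$$
\tilde h(x):=\begin{cases} h(x), & x\in[a,y],\\ 0, & x\in[y,b], \end{cases}
$$
which belongs to $W^{1,2}_\c{D}$ since $h(a)=h(y)=0$. Splitting $\Psi(\tilde h)$ at $y$, the contribution from $[y,b]$ vanishes identically, while on $[a,y]$ the identity \eqref{PsiAB}, the Jacobi equation $\c{A}h=0$, and the boundary vanishing $h(a)=h(y)=0$ give $\Psi(\tilde h)=0$. The key point is that $\tilde h$ has a corner at $y$ with $h'(y)\ne 0$: a nontrivial solution of $\c{A}h=0$ cannot satisfy $h(y)=h'(y)=0$ simultaneously, by uniqueness of the Cauchy problem. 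Hence for a variation $\eta\in W^{1,2}_\c{D}$ with $\eta(y)\ne 0$, integrating \eqref{Psiprime} by parts on each subinterval and using $\c{A}h=0$ leaves only the single boundary contribution
$$
\Psi'(\tilde h)\eta = 2f^0_{p_1p_1}(y)\,h'(y)\,\eta(y)\ne 0,
$$
where nonvanishing of the coefficient uses \eqref{f-conv}. Choosing the sign of $\eta$ so that $\Psi'(\tilde h)\eta<0$, a Taylor expansion yields $\Psi(\tilde h+s\eta)<0$ for small $s>0$, and Proposition~\ref{prop-psi}(ii) concludes.

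For part~(ii), after replacing $h$ by $-h$ if necessary I may assume $h>0$ on $(a,b]$; by uniqueness $h'(a)\ne 0$ also. For $v\in C^1_\c{D}$ I set $w:=v/h$ on $(a,b]$ and verify the pointwise Jacobi identity
$$
 \m{F}(x,v,v')= f^0_{p_1p_1}\,h^2(w')^2+\frac{d}{dx}\bigl[(f^0_{p_1p_1}h'+f^0_{p_1u_1}h)\,h\,w^2\bigr],
$$
which is a direct consequence of $\c{A}h=0$. Integrating over $[a+\varepsilon,b]$ and letting $\varepsilon\to 0^+$---the boundary term at $b$ vanishes because $v(b)=0$ with $h(b)\ne0$, and the one at $a+\varepsilon$ vanishes in the limit since $v$ and $h$ both vanish linearly at $a$---produces
$$
 \Psi(v)=\int_a^b f^0_{p_1p_1}\,h^2(w')^2\,dx \geq c^0\int_a^b h^2(w')^2\,dx\geq 0,
$$
strictly positive unless $w'\equiv0$, which combined with $w(b)=0$ forces $v\equiv 0$. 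Combining this with \eqref{f-conv} via a G\aa rding-type estimate upgrades pointwise positivity to the coercive bound $\Psi(v)\geq c\|v\|_{1,2}^2$, which extends from $C^1_\c{D}$ to $W^{1,2}_\c{D}$ by density; Proposition~\ref{prop-psi}(i) then yields the strict weak minimizer property.

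The main technical obstacle in part~(ii) is the singular behaviour of $w=v/h$ at $x=a$: one must verify carefully, using $h'(a)\ne0$ and $v(a)=0$, that $v^2/h\to 0$ so that the boundary contribution at $a+\varepsilon$ disappears in the limit, and that $w\in W^{1,2}((a,b))$ for all admissible $v$. A secondary hurdle is the passage from pointwise positivity to the coercive bound on $W^{1,2}_\c{D}$, for which the strengthened Legendre condition~\eqref{f-conv} is indispensable.
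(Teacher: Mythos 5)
Your argument is correct in substance, but it follows a genuinely different route from the paper. The paper does not prove Theorem~\ref{thm-Jacobi0} directly: it deduces it as the special case $N=1$, $\ID_a=\ID_b=\{1\}$ of the general Theorem~\ref{thm-Jacobi}, whose proof runs through the eigenvalue function $\lambda_1(y)=\inf_{S_y}\Psi$ and Proposition~\ref{prop-lambda} (a conjugate point forces $\lambda_1(y)\leq0$, strict monotonicity then forces $\lambda_1(b)<0$; absence of conjugate points forces $\lambda_1(b)>0$, which is exactly the coercive bound $\Psi(v)\geq\lambda_1(b)\|v\|_{X_b}^2$). You instead give the classical scalar argument: for (i) the truncation $\tilde h$ with $\Psi(\tilde h)=0$ followed by a corner perturbation, and for (ii) the Jacobi substitution $v=hw$. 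Your perturbation step in (i) is sound --- the computation $\Psi'(\tilde h)\eta=2\c{B}h(y)\eta(y)=2f^0_{p_1p_1}(y)h'(y)\eta(y)$ via \eqref{Psiprime}, the nonvanishing of $h'(y)$ by uniqueness, and the expansion $\Psi(\tilde h+s\eta)=s\Psi'(\tilde h)\eta+s^2\Psi(\eta)$ are all correct; this is in fact the same device the paper uses in its proof of Theorem~\ref{thm-Jacobi}(iii). The pointwise identity in (ii) is also correct (it is equivalent to $\c{A}h=0$), and your treatment of the boundary term at $a$ via $h'(a)\neq0$ is the right care to take. What the paper's $\lambda_1$-machinery buys is uniformity over the endpoint $y$ and immediate extension to the vector-valued and variable-endpoint cases; what your route buys is elementariness and explicitness in the scalar Dirichlet case.

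The one place your write-up is genuinely incomplete is the final step of (ii): the passage from ``$\Psi(v)>0$ for all $v\in C^1_\c{D}\setminus\{0\}$'' to ``$\Psi(v)\geq c\|v\|_{1,2}^2$ on $W^{1,2}_\c{D}$''. This is not automatic: a quadratic form on an infinite-dimensional space can be strictly positive without being coercive, and \eqref{f-conv} alone does not repair this, so the phrase ``G\aa rding-type estimate'' is naming the conclusion rather than supplying the argument. You need either the classical device of showing that $\Psi(v)-\eps\int_a^b(v')^2\,dx$ is still positive for small $\eps>0$ (because the Jacobi equation of the perturbed functional still has no zero in $(a,b]$, by continuous dependence), or a weak-compactness argument: a normalized minimizing sequence for $\inf_{S_b}\Psi$ converges weakly, $\hat\Psi$ is weakly sequentially continuous, and a nonzero weak limit would violate strict positivity while a zero weak limit would violate normalization. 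The latter is precisely what the paper packages into the statement $\lambda_1(b)>0$ in Proposition~\ref{prop-lambda}. Once this step is filled in, Proposition~\ref{prop-psi}(i) applies and your proof is complete.
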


Our analogue in the case of variable endpoints is the following theorem.

\begin{theorem} \label{thm-Jacobi1}
Assume \eqref{ass1} with $N=1$ and \eqref{ass2}.
Let $\IN_a=\IN_b=\{1\}$ and $\c{B}h(a)=0$.

\strut\hbox{\rm(i)} If $h(y)=0$ for some $y\in(a,b]$ or $\c{B}h(b)h(b)<0$, then $u^0$
is not a weak minimizer.

\strut\hbox{\rm(ii)} If $h(y)\ne0$ for any $y\in(a,b]$ and $\c{B}h(b)h(b)>0$, then $u^0$
is a strict weak minimizer.
\end{theorem}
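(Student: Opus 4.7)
The plan is to apply Proposition~\ref{prop-psi}, which reduces matters to positive or negative definiteness of $\Psi$ on $W^{1,2}_\c{D}=W^{1,2}$ (note that $\IN_a=\IN_b=\{1\}$ leaves no Dirichlet constraints). The algebraic engine, as in the classical Jacobi theory, is the Legendre-type identity
\begin{equation*}
\m{F}(x,v,v') = f^0_{pp}(x)\,h(x)^2 \bigl((v/h)'\bigr)^2
+ \frac{d}{dx}\left(\frac{\c{B}h(x)}{h(x)}\,v(x)^2\right),
\end{equation*}
valid on any subinterval of $[a,b]$ on which $h$ does not vanish, and verified directly from $\c{A}h=0$ (equivalently $(\c{B}h)'=\c{C}h$) together with the symmetry $f^0_{pu}=f^0_{up}$. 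A preliminary remark: $h(a)\neq 0$, because otherwise $\c{B}h(a)=f^0_{pp}(a)h'(a)=0$ combined with $f^0_{pp}(a)\geq c^0>0$ forces $h(a)=h'(a)=0$, so $h\equiv0$ by ODE uniqueness, contradicting \eqref{ass2}.

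\emph{Part~(ii).} Since $h(a)\neq0$ and $h(y)\neq0$ for $y\in(a,b]$, $h$ never vanishes on $[a,b]$. Integrating the identity and using $\c{B}h(a)=0$,
\begin{equation*}
\Psi(v) = \int_a^b f^0_{pp}\,h^2\bigl((v/h)'\bigr)^2\,dx + \frac{\c{B}h(b)}{h(b)}\,v(b)^2 \geq 0,
\end{equation*}
with equality forcing $v/h$ constant and $v(b)=0$, hence $v\equiv 0$. Upgrading this to coercivity $\Psi(v)\geq c\|v\|_{1,2}^2$ is the main technical step: I would substitute $w:=v/h$ (a $W^{1,2}$-bi-Lipschitz change of variable, since $h\in C^2$ is bounded away from $0$), use $f^0_{pp}(x)h(x)^2\geq c_1>0$ on $[a,b]$, and invoke the standard one-sided Poincar\'e inequality $\|w\|_{1,2}^2\leq C\bigl(\|w'\|_{L^2}^2+w(b)^2\bigr)$. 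Proposition~\ref{prop-psi}(i) then yields the conclusion.

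\emph{Part~(i).} If $h$ does not vanish on $(a,b]$ but $\c{B}h(b)h(b)<0$, set $v:=h$; formula \eqref{PsiAB} together with $\c{A}h=0$ and $\c{B}h(a)=0$ gives $\Psi(h)=\c{B}h(b)h(b)<0$, so Proposition~\ref{prop-psi}(ii) applies. Otherwise fix any $y_0\in(a,b]$ with $h(y_0)=0$ and define
\begin{equation*}
v(x)=\begin{cases} h(x), & a\leq x\leq y_0,\\ 0, & y_0\leq x\leq b.\end{cases}
\end{equation*}
Continuity at $y_0$ gives $v\in W^{1,2}$, and applying \eqref{PsiAB} on $[a,y_0]$ together with $\c{A}h=0$, $\c{B}h(a)=0$, $h(y_0)=0$ yields $\Psi(v)=0$. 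To conclude I would perturb by $\eps w$ for suitable $w\in C^1([a,b])$: the same integration by parts shows that the symmetric polarization $Q$ of $\Psi$ satisfies $Q(v,w)=\c{B}h(y_0)w(y_0)$; ODE uniqueness rules out $h(y_0)=h'(y_0)=0$, so $\c{B}h(y_0)=f^0_{pp}(y_0)h'(y_0)\neq 0$, and choosing the sign of $w(y_0)$ so that $Q(v,w)<0$ gives $\Psi(v+\eps w)=2\eps Q(v,w)+\eps^2\Psi(w)<0$ for all sufficiently small $\eps>0$.

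The main obstacle is the coercivity upgrade in part~(ii) (nonnegativity alone does not suffice for Proposition~\ref{prop-psi}), but this reduces to a routine Poincar\'e/Rellich argument once the Legendre identity is in hand. The perturbation construction in part~(i) is conceptually simple yet requires careful bookkeeping of the boundary terms at the kink $y_0$.
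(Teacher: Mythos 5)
Your proof is correct, but it takes a genuinely different route from the paper. The paper does not prove Theorem~\ref{thm-Jacobi1} directly: it derives it as the special case $N=1$, $\ID_a=\ID_b=\emptyset$ of the vector-valued Theorem~\ref{thm-Jacobi}, whose proof runs through the monotone, continuous eigenvalue function $\lambda_1(y)=\inf_{S_y}\Psi$ on the nested spaces $X_y$, the regularity of constrained minimizers of the quadratic functional (Proposition~\ref{prop-lambda}), and the decomposition $h=\tilde h+\hat h$ with $\tilde h$ a Jacobi field matching $h(b)$ and $\hat h\in X_b$, the cross term $\Psi'(\tilde h)\hat h$ vanishing. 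You instead use the classical scalar Legendre--Picone identity, writing $\m{F}(x,v,v')$ as $f^0_{pp}h^2((v/h)')^2$ plus an exact derivative wherever $h\ne0$; this is more elementary and entirely self-contained (no weak compactness or regularity theory), and your handling of the negative-direction cases by explicit test functions plus polarization at the zero $y_0$ is clean and matches the spirit of the paper's computation $\Psi(h)=\c{B}h\cdot h\big|_a^b$. You are also right that the coercivity upgrade via the one-sided Poincar\'e inequality is a necessary extra step, since Proposition~\ref{prop-psi}(i) requires $\Psi(v)\ge c\|v\|_{1,2}^2$ and not mere positivity; your argument for it is sound. What the paper's heavier machinery buys is generality: the Picone trick requires dividing by the scalar solution $h$ and does not carry over to $N>1$ (where one must work with a nonsingular matrix of Jacobi fields), and the function $\lambda_1$ is reused in Theorem~\ref{thm-Jacobi}(iii) and Remark~\ref{rem-Jacobi}. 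Indeed the paper explicitly remarks that the scalar method of \cite{Bed} --- essentially your approach, which there even yields the slightly stronger hypothesis $\c{B}h(a)h(a)\le0$ --- ``does not seem to be easily extendable to the vector-valued case.''
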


In fact, a slight generalization of Theorem~\ref{thm-Jacobi1}(ii) has been
proved in \cite{Bed}:
The initial condition $\c{B}h(a)=0$ can be replaced with $\c{B}h(a)h(a)\leq0$.
Unfortunately, the method of the proof in \cite{Bed} does not seem to be easily extendable
to the vector-valued case.

Theorems~\ref{thm-Jacobi0} and~\ref{thm-Jacobi1} are special cases of the following general theorem.

%--------------------------------
\begin{theorem} \label{thm-Jacobi}
Assume \eqref{ass1}.
Let $h^{(1)},\dots,h^{(N)}$ be linearly independent solutions
of the Jacobi equation $\c{A}h=0$
satisfying the initial conditions
$h(a)\in\R^N_{\c{D},a}$, $\c{B}h(a)\in\R^N_{\c{N},a}$.
Set 
$$D(x):=\det(h^{(1)}(x),\dots,h^{(N)}(x)),\
H:=\hbox{\rm span}(h^{(1)},\dots,h^{(N)}),\ 
H_0:=\{h\in H: h(b)=0\}.$$

\strut\hbox{\rm(i)}
If $D(x)=0$ for some $x\in(a,b)$ or 
$$\hbox{$\IN_b\ne\emptyset$ \ and \ $\c{B}h(b)\cdot h(b)<0$ 
 for some $h\in H_{\c{D},b}$},$$
then $u^0$ is not a weak minimizer.

\strut\hbox{\rm(ii)} If $D\ne0$ in $(a,b]$ and
$$\hbox{either \ $\IN_b=\emptyset$ \ or \ 
$\c{B}h(b)\cdot h(b)>0$ 
for any $h\in H_{\c{D},b}\setminus\{0\}$},$$
then $u^0$ is a strict weak minimizer.

\strut\hbox{\rm(iii)}
Let $D\ne0$ in $(a,b)$, $D(b)=0$ (hence $H_0\ne\{0\}$), and $\IN_b\ne\emptyset$. 
If 
\begin{equation}\label{cond-not}
\hbox{there exists $h\in H_0$ such that $\c{B}_i h(b)\ne0$ for some $i\in\IN_b$,}
\end{equation}
then $u^0$ is not a weak minimizer.
If $\ID_b=\emptyset$, then \eqref{cond-not} is always true.
\end{theorem}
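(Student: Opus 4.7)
The plan is to invoke Proposition~\ref{prop-psi} throughout, reducing claims about weak minimizers of $\Phi$ to positive/indefinite character of the second variation $\Psi$ on $W^{1,2}_\c{D}$. A key preliminary observation is that for every $h \in H$ the expression $\c{B}h(a) \cdot h(a)$ vanishes: when $i \in \ID_a$ one has $h_i(a) = 0$, and when $i \in \IN_a$ one has $\c{B}_i h(a) = 0$. Combining this with $\m{F} = \c{B}h \cdot h' + \c{C}h \cdot h$ and integrating by parts against the Jacobi equation $\c{A}h = 0$ yields the master identity $\Psi(h) = \c{B}h(b) \cdot h(b)$ for every $h \in H$. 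Applied to pairs of columns of $M$, the same observation forces $M^T \c{B}M$ to be symmetric on $[a,b]$, which is precisely what makes the Jacobi--Clebsch substitution used below go through.

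For part (i), I handle the two subcases separately. If $\IN_b \neq \emptyset$ and some $h \in H_{\c{D},b}$ satisfies $\c{B}h(b) \cdot h(b) < 0$, then $h \in C^1_\c{D}$ and the master identity directly gives $\Psi(h) < 0$, so Proposition~\ref{prop-psi}(ii) applies. Otherwise, $D(x_0) = 0$ for some interior $x_0$, and I pick a nontrivial $h \in H$ with $h(x_0) = 0$ and truncate it to $\tilde h := h\chi_{[a,x_0]} \in W^{1,2}_\c{D}$; the master identity on $[a, x_0]$ gives $\Psi(\tilde h) = 0$. If $u^0$ were a weak minimizer, then $\Psi \geq 0$ on $W^{1,2}_\c{D}$ and $\tilde h$ would minimize $\Psi$, so $\Psi'(\tilde h)\phi = 0$ for every $\phi \in W^{1,2}_\c{D}$. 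Piecewise integration by parts (using $\c{A}h = 0$ on $[a, x_0]$ and $\tilde h \equiv 0$ on $[x_0, b]$, and killing the endpoint-$a$ term via the preliminary observation) collapses $\Psi'(\tilde h)\phi$ to $2 \c{B}h(x_0) \cdot \phi(x_0)$, so unrestricted variation of $\phi(x_0)$ would force $\c{B}h(x_0) = 0$. But $h(x_0) = 0$ gives $\c{B}h(x_0) = (f^0_{p_ip_j}(x_0))_{ij}\, h'(x_0)$, and the strengthened Legendre condition makes this matrix invertible, hence $h'(x_0) = 0$; uniqueness for $\c{A}h = 0$ then forces $h \equiv 0$, a contradiction.

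For part (ii), I perform the matrix Jacobi--Clebsch transformation. On $(a, b]$, where $M$ is invertible, the matrix $U := (\c{B}M) M^{-1}$ is well defined and symmetric (by the preliminary observation), and satisfies a Riccati equation equivalent to $\c{A}M = 0$. Completing the square against this Riccati equation yields the pointwise identity
\[
\m{F}(x, v, v') = \tfrac{d}{dx}\bigl(v^T U v\bigr) + (\c{B}v - Uv) \cdot (f^0_{p_ip_j})^{-1}(\c{B}v - Uv)
\]
for every sufficiently smooth $v$, extending by density to $W^{1,2}$. Integrating, $\Psi(v)$ equals a nonnegative integral plus $v(b)^T U(b) v(b) - \lim_{x \to a^+} v(x)^T U(x) v(x)$; the boundary term at $b$ equals $\c{B}h(b) \cdot h(b)$ for the unique $h \in H$ with $h(b) = v(b) \in \R^N_{\c{D},b}$ (using $D(b) \neq 0$), which is $\geq 0$ by hypothesis. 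If $\Psi(v) = 0$, then the integrand vanishes identically, forcing $v = M\eta$ for a constant $\eta$; combined with the hypothesis at $b$ this gives $v(b) = 0$, and invertibility of $M(b)$ gives $\eta = 0$, so $v \equiv 0$. Positive-definiteness now follows and Proposition~\ref{prop-psi}(i) concludes.

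For part (iii), let $h$ witness \eqref{cond-not}; then $h \in H_0 \subset C^1_\c{D}$ and the master identity gives $\Psi(h) = 0$. If $u^0$ were a weak minimizer, $h$ would minimize $\Psi$ in $C^1_\c{D}$ and integration by parts would yield $\Psi'(h)\phi = 2\sum_{i \in \IN_b} \c{B}_i h(b)\, \phi_i(b) = 0$ for every $\phi \in C^1_\c{D}$, and free variation of the $\phi_i(b)$ with $i \in \IN_b$ would contradict the choice of $h$. When $\ID_b = \emptyset$, a nonzero $h \in H_0$ with $\c{B}_i h(b) = 0$ for every $i$ would give $\c{B}h(b) = (f^0_{p_ip_j}(b))_{ij}\, h'(b) = 0$, forcing $h'(b) = 0$ and thus $h \equiv 0$ by uniqueness, so \eqref{cond-not} is automatic. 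The main obstacle throughout is the treatment of the boundary term at $a$ in part (ii): since $M(a)$ may be singular and $U$ may blow up there, showing $\lim_{x\to a^+} v(x)^T U(x) v(x) = 0$ for every $v \in W^{1,2}_\c{D}$ requires either a continuity argument built from the vanishing $\c{B}h(a) \cdot h(a) = 0$ on $H$, or an approximation on $[a + \eps, b]$ followed by passage to $\eps \to 0^+$.
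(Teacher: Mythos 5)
Your parts (i) and (iii) are sound. The ``master identity'' $\Psi(h)=\c{B}h(b)\cdot h(b)$ for $h\in H$ is exactly the paper's tool, and your treatment of an interior zero of $D$ --- truncate, note $\Psi(\tilde h)=0$, and derive $\c{B}h(x_0)=0$ from the vanishing of the first variation, whence $h'(x_0)=0$ and $h\equiv0$ --- is a legitimate classical alternative to the paper's route, which instead shows that $\lambda_1(y)=\inf_{S_y}\Psi$ is continuous, nonincreasing, and becomes \emph{strictly} negative at $b$ once $D$ vanishes in $(a,b)$ (Proposition~\ref{prop-lambda}), so that a function with $\Psi<0$ exists outright. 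Your part (iii) is essentially the paper's computation $\Psi'(h)\tilde h=2\c{B}_ih(b)\ne0$ recast as a contradiction, and is fine.

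The genuine gap is in part (ii), at two places. First, the boundary term at $x=a$: the matrix $M:=(h^{(1)},\dots,h^{(N)})$ is singular at $a$ whenever $\ID_a\ne\emptyset$, so $U=(\c{B}M)M^{-1}$ blows up there, and you must actually prove $\limsup_{x\to a+}v(x)^{T}U(x)v(x)\le0$ for every $v\in W^{1,2}_\c{D}$. You name two possible strategies but carry out neither; in the mixed case $\emptyset\ne\ID_a\ne I$ the singular part of $U$ has a nontrivial block structure and the cross terms between the $\ID_a$- and $\IN_a$-components of $v$ are of order $(x-a)^{-1}\cdot O(\sqrt{x-a})\cdot O(1)$, so their vanishing is not automatic and needs a genuine argument. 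Second, even granting this, your conclusion is ``$\Psi(v)\ge0$ with equality only for $v=0$'', whereas Proposition~\ref{prop-psi}(i) requires (see its proof) the coercive bound $\Psi(v)\ge c\|v\|_{1,2}^2$; in infinite dimensions strict positivity does not imply coercivity, so a further step is needed (e.g.\ run the same argument for $\Psi(v)-\theta\int_a^b|v'|^2dx$ with $\theta>0$ small, or a direct-method argument on the unit sphere). The paper sidesteps both issues: on $X_b$ the coercive bound comes for free from $\lambda_1(b)>0$ (the infimum over $S_b$ being attained by weak compactness), and the remaining finite-dimensional piece is handled by the exact orthogonal splitting $\Psi(h)=\Psi(\tilde h)+\Psi(\hat h)$ with $\tilde h\in H_{\c{D},b}$, without ever forming $U$.
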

%-----------------------------------

The proof of Theorem~\ref{thm-Jacobi} is based
on a modification of the classical Jacobi theory,
and this is also true in the case of the
corresponding proof in \cite{Zez93}. However, our 
conditions in Theorem~\ref{thm-Jacobi} are simpler
than those in \cite{Zei, Zez93}, see Remark~\ref{rem-ZZ} in the Appendix.

In order to prove Theorem~\ref{thm-Jacobi}, we need some preparation.
Given $y\in(a,b]$, let
$$X_y:=\{h\in W^{1,2}([a,b],\R^N):h(a)\in\R^N_{\c{D},a}, 
\ h(x)=0\hbox{ for }x\geq y\}$$
be endowed with the norm
$\|h\|_{X_y}:=(\int_a^b\sum_{i,j}f^0_{p_ip_j}h_i'h_j'\,dx)^{1/2}$
(which is equivalent to the standard norm in $W^{1,2}$ for $h\in X_y$ due to \eqref{f-conv}
and the boundary condition $h(b)=0$),
and let $S_y$ denote the unit sphere in $X_y$.
If $\tilde y\in(y,b]$,
then $X_y\subset X_{\tilde y}$, hence 
$S_y\subset S_{\tilde y}$.
Set also
\begin{equation} \label{lambda1}
  \lambda_1=\lambda_1(y):=\inf_{h\in S_y}\Psi(h)=1+\inf_{h\in S_y}\hat\Psi(h),
\end{equation}
where
$$
\hat\Psi(h):=\int_a^b\sum_{i,j}\Bigl(f^0_{p_iu_j}h_i'h_j
+f^0_{u_ip_j}h_ih_j'+f^0_{u_iu_j}h_ih_j\Bigr)\,dx.
$$
Since $S_y\subset S_{\tilde y}$ if $y<\tilde y$,
the function $\lambda_1$ is nonincreasing.
In addition, one can easily show that $\lambda_1$ is continuous,
and the estimate
$$ \begin{aligned}
|h(x)|&=\Big|\int_x^yh'(\xi)\,d\xi\Big|
\leq\Bigl(\int_x^y|h'(\xi)|^2\,d\xi\Bigr)^{1/2}\sqrt{y-x} \\
&\leq\frac1{\sqrt{c^0}}\Bigl(\int_a^b\sum_{i,j}f^0_{p_ip_j}h_i'h_j'\,d\xi\Bigr)^{1/2}\sqrt{y-a}
= \frac1{\sqrt{c^0}}\sqrt{y-a} 
\end{aligned}
$$ 
for $h\in S_y$ and $x\in(a,y)$ implies $\lim_{y\to a+}\lambda_1(y)= 1$.

%--------------------
\begin{proposition} \label{prop-lambda}
Let $D$ be as in Theorem~\ref{thm-Jacobi} and $y\in(a,b]$.

{\rm(i)} If $\lambda_1(y)=0$, then $D(y)=0$ and $\lambda_1(z)<0$ for $z\in(y,b]$.
If $D(y)=0$, then $\lambda_1(y)\leq0$.

{\rm(ii)} If $h\in X_b$, then $\Psi(h)\geq\lambda_1(b)\|h\|_{X_b}^2$. 
If $\lambda_1(b)<0$, then there exists $h\in X_b$ such that $\Psi(h)<0$.
\end{proposition}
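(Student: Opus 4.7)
The plan is to use a direct-method argument in the calculus of variations, centered on the decomposition
\[
\Psi(h)=\|h\|_{X_y}^2+\hat\Psi(h),
\]
in which $\|\cdot\|_{X_y}^2$ is weakly lower semicontinuous while $\hat\Psi$ is weakly continuous on $X_y$. The latter is the decisive analytic fact: by the compact embedding $W^{1,2}\hookrightarrow C([a,b],\R^N)$, a weakly convergent sequence $h^n\rightharpoonup h$ in $X_y$ converges uniformly, and every term of $\hat\Psi$ pairs one factor of $h$ (converging strongly in $L^\infty$) with either another $h$ or a single derivative $h'$ (converging weakly in $L^2$). For the attainment of $\lambda_1(y)$, I would take a minimizing sequence $h^n\in S_y$, extract a weak subsequential limit $h^n\rightharpoonup h$, and set $\alpha:=\|h\|_{X_y}^2\leq 1$. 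Passing to the limit gives $\lambda_1(y)=\Psi(h)+(1-\alpha)$. If $\alpha=1$ then $h\in S_y$ realizes the infimum; if $\alpha<1$ then either $h=0$ (forcing $\lambda_1(y)=1$) or rescaling $h/\|h\|_{X_y}\in S_y$ yields $\Psi(h)\geq\alpha\lambda_1(y)$, which combined with the limit identity forces $\lambda_1(y)\geq 1$. In particular, whenever $\lambda_1(y)\leq 0$ the infimum is attained by some $\tilde h\in S_y$.

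For statement (i), assuming $\lambda_1(y)=0$ and letting $\tilde h\in S_y$ be a minimizer from the previous step, the Lagrange multiplier rule (with multiplier equal to $\lambda_1(y)=0$, as seen by testing $\Psi'(\tilde h)$ against $\tilde h$ itself) gives $\Psi'(\tilde h)k=0$ for every $k\in X_y$. Using \eqref{Psiprime}, testing first with $k$ compactly supported in $(a,y)$ shows that $\tilde h$ solves the Jacobi equation $\c{A}\tilde h=0$ weakly on $(a,y)$, and then testing with $k$ permitted to be nonzero at $a$ on indices in $\IN_a$ recovers the natural boundary condition $\c{B}\tilde h(a)\in\R^N_{\c{N},a}$; together with $\tilde h(a)\in\R^N_{\c{D},a}$ (built into $X_y$) these are exactly the initial conditions defining the span of $h^{(1)},\dots,h^{(N)}$. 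Standard linear ODE regularity promotes $\tilde h$ to a classical solution on $(a,y)$, so $\tilde h=\sum_i c_i h^{(i)}$ there with $(c_i)\ne 0$, and $\tilde h(y)=0$ then reads $\sum_i c_i h^{(i)}(y)=0$, i.e.\ $D(y)=0$.

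To prove $\lambda_1(z)<0$ for $z\in(y,b]$, I would extend $\tilde h$ by zero across $[y,z]$: the extension lies in $S_z$ with $\Psi$-value $0$, so $\lambda_1(z)\leq 0$. If equality held, the Euler--Lagrange argument applied on $X_z$ would make $\c{B}\tilde h$ absolutely continuous on $(a,z)$ with weak derivative $\c{C}\tilde h$; since $\tilde h\equiv 0$ on $[y,z]$ we would have $\c{B}\tilde h\equiv 0$ there, hence $\c{B}\tilde h(y)=0$ by continuity. Together with $\tilde h(y)=0$ and invertibility of $(f^0_{p_ip_j}(y))$ from \eqref{f-conv}, this would force $\tilde h'(y^-)=0$; ODE uniqueness for the linear Jacobi initial value problem at $y$ would then give $\tilde h\equiv 0$ on $[a,y]$, contradicting $\|\tilde h\|_{X_y}=1$. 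For the converse direction in (i), if $D(y)=0$ I would pick a nonzero $(c_i)$ with $\sum_i c_i h^{(i)}(y)=0$ and let $h:=\sum_i c_i h^{(i)}$ on $[a,y]$, extended by zero beyond; then $h\in X_y\setminus\{0\}$, and applying \eqref{PsiAB} on $[a,y]$ kills the interior integral (since $\c{A}h=0$ there) and both boundary terms (at $a$ by the initial conditions of the $h^{(i)}$, at $y$ by $h(y)=0$), giving $\Psi(h)=0$ and hence $\lambda_1(y)\leq 0$.

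Statement (ii) is direct: for $h\in X_b\setminus\{0\}$ I would apply the defining infimum to $h/\|h\|_{X_b}\in S_b$ and use the quadratic scaling $\Psi(ch)=c^2\Psi(h)$ to obtain $\Psi(h)\geq\lambda_1(b)\|h\|_{X_b}^2$; the case $h=0$ is trivial, and when $\lambda_1(b)<0$ any element of $S_b$ with $\Psi$-value below $0$ (guaranteed by the definition of infimum) supplies the desired $h$. The principal obstacle is the attainment step in the first paragraph, where the weak-limit dichotomy $\alpha=1$ versus $\alpha<1$ must be resolved cleanly; a secondary technicality is upgrading the weak Euler identity together with $\tilde h(y)=0$ and $\c{B}\tilde h(y)=0$ into the strong endpoint condition $\tilde h'(y)=0$ via the strengthened Legendre condition, which is exactly what makes ODE uniqueness applicable at the crucial step.
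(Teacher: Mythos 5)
Your proposal is correct and follows essentially the same route as the paper: both rest on the decomposition $\Psi=\|\cdot\|_{X_y}^2+\hat\Psi$ with $\hat\Psi$ weakly sequentially continuous, attainment of the infimum when $\lambda_1(y)\leq0$, identification of the minimizer (via the Euler--Lagrange equation, regularity from the invertibility of $(f^0_{p_ip_j})$, and the natural boundary condition at $a$) with an element of $\mathrm{span}(h^{(1)},\dots,h^{(N)})$ vanishing at $y$, the zero-extension plus IVP-uniqueness contradiction for $\lambda_1(z)<0$, and the integration-by-parts computation \eqref{PsiAB} for the converse. The only cosmetic difference is the attainment step, where you resolve the possible norm loss of the weak limit of a minimizing sequence on $S_y$ by a dichotomy on $\|h\|_{X_y}^2$, while the paper minimizes $\hat\Psi$ over the closed ball and pushes the minimizer to the sphere by quadratic scaling; the two are interchangeable.
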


\begin{proof}
Let $\lambda_1(y)=0$ and let $B_y$ denote the closed unit ball in $X_y$. 
Since $\hat\Psi$ is weakly sequentially 
continuous, there exists $h_y\in B_y$
such that $\hat\Psi(h_y)=\inf_{B_y}\hat\Psi=-1$.
We have $h_y\in S_y$
(otherwise $th_y\in B_y$ for some $t>1$, and
$\hat\Psi(th_y)=t^2\hat\Psi(h_y)<\inf_{B_y}\hat\Psi$,
which yields a contradiction).
Since $\Psi(h_y)=\inf_{S_y}\Psi=0$,
$h_y$ is a global minimizer of $\Psi$ in $X_y$.  
Notice that $\m{F}\in C^1$ satisfies the growth condition
$$(1+|p|)|\m{F}_p(x,u,p)|+|\m{F}_u(x,u,p)|\leq 
C(1+|p|)(|u|+|p|)\leq 2C(1+|u|^2)(1+|p|^2),$$
where $C$ depends only on the sup-norm of
$f^0_{p_ip_j},f^0_{p_iu_j},f^0_{u_ip_j},f^0_{u_iu_j}$, 
hence
Proposition~\ref{thm-Euler}(ii) and \eqref{gBC} imply   
\begin{equation} \label{hyC1}
\begin{aligned}
2\c{B}_ih_y(x)=\m{F}_{p_i}(x,h_y(x),h'_y(x))
=\int_a^x\m{F}_{u_i}(\xi,h_y(\xi),h'_y(\xi))\,d\xi+c_i
=\int_a^x2\c{C}_ih_y\,d\xi+c_i
\end{aligned}
\end{equation}
for a.e.~$x\in[a,y]$. 
Since the right-hand side of \eqref{hyC1} is a continuous
function of $x$, $f\in C^3$ and \eqref{f-conv} is true
(hence the matrix $f^0_{p_ip_j}$ is invertible
and the inverse matrix is a continuous function of $x$),   
we see that the restriction of $h_y$ to $[a,y]$ is $C^1$.
Denote this restriction by $\bar h_y$ and
set $C^1_y:=\{w\in C^1([a,y]): w(a)\in \R^N_{\c{D},a},\ w(y)=0\}$,
$\Psi_y(h)=\int_a^y \m{F}(x,h(x),h'(x))\,dx$.
Then $\bar h_y$ is a critical point
of $\Psi_y$ in $\bar h_y+C^1_y=C^1_y$. 
Now Proposition~\ref{prop-NBC}, \eqref{Jacobi-Euler} and \eqref{gBC} imply
that $\bar h_y$ is $C^2$, it satisfies the Jacobi equation $\c{A}h=0$ in $[a,y]$ 
and the natural boundary conditions $\c{B}h(a)\in\R^N_{\c{N},a}$.
Since we also have $h_y(a)\in\R^N_{\c{D},a}$,
there exists $\alpha\in\R^N\setminus\{0\}$ such that
$h_y=\sum_k\alpha_kh^{(k)}$ on $[a,y]$, where $h^{(k)}$ are as in Theorem~\ref{thm-Jacobi}.
Since $h_y(y)=0$, we have $D(y)=0$.

Next assume on the contrary that $\lambda_1(y)=0=\lambda_1(z)$ for some $z\in(y,b]$.
Then the minimizer $h_y$ is a global minimizer of $\Psi$ in $X_z$.
Similarly as above we deduce that $h_y\in C^2([a,z])$
and $h_y$ solves the Jacobi equation in $[a,z]$.
Consequently, $h_y(y)=h_y'(y)=0$, which yields a contradiction with
the uniqueness of solutions of the initial value problem for the Jacobi equation. 

Next assume that $D(y)=0$. Then there exists 
$\alpha=(\alpha_1,\dots,\alpha_N)\in\R^N\setminus\{0\}$ such that
$h:=\sum_k\alpha_kh^{(k)}$ satisfies $h(y)=0$, hence 
if we set $\tilde h(x):=h(x)$ for $x\leq y$ and $\tilde h(x):=0$ otherwise, then
$\tilde h\in X_y$. In addition,
using $\c{A}_ih=0$, $\c{B}_ih(a)\in\R^N_{\c{N},a}$, $h(a)\in\R^N_{\c{D},a}$
and $h(y)=0$ we obtain
$$ \begin{aligned} \Psi(\tilde h) &=
\int_a^b \m{F}(x,\tilde h(x),\tilde h'(x))\,dx 
=\int_a^y \m{F}(x,h(x),h'(x))\,dx \\
&=\int_a^y\sum_i(\c{A}_ih)h_i\,dx+\sum_i(\c{B}_ih)h_i\Big|_a^y=0,
\end{aligned}$$ 
hence $\lambda_1(y)\leq0$.

If $h\in X_b\setminus\{0\}$, then 
 $\Psi(h)=\|h\|_{X_b}^2\Psi(h/\|h\|_{X_b})\geq\lambda_1(b)\|h\|_{X_b}^2$
by the definition of $\lambda_1$.
If $\lambda_1(b)<0$, then the definition of $\lambda_1$ implies the existence of
$h\in S_b$ such that $\Psi(h)<0$.
\end{proof}

\begin{proof}[Proof of Theorem~\ref{thm-Jacobi}]
We will show that 
\begin{equation} \label{ih0}
\hbox{the assumptions in (i) (or (iii)) imply
$\Psi(h)<0$ for some $h\in W^{1,2}_\c{D}$,}
\end{equation}
while 
\begin{equation} \label{iiPsi0}
\hbox{the assumptions in (ii) guarantee 
that $\Psi$ is positive definite in $W^{1,2}_\c{D}$,}
\end{equation}
hence the assertions in Theorem~\ref{thm-Jacobi} will follow from
Proposition~\ref{prop-psi}. 

(i)
If $D(x)=0$ for some $x\in(a,b)$, then Proposition~\ref{prop-lambda}(i) 
implies $\lambda_1(x)\leq0$ and $\lambda_1(b)<0$,
hence Proposition~\ref{prop-lambda}(ii)
implies the existence of $h\in X_b\subset W^{1,2}_\c{D}$ such that $\Psi(h)<0$.

If $\IN_b\ne\emptyset$ and 
$\c{B}h(b)\cdot h(b)<0$ for some $h\in H_{\c{D},b}\subset W^{1,2}_\c{D}$,
then $\c{A}h=0$, $h_i(a)=0$ for $i\in\ID_a$ and $\c{B}_ih(a)=0$ for $i\in\IN_a$,
hence \eqref{PsiAB} implies
$$\Psi(h)=\c{B}h\cdot h\Big|_a^b=\c{B}h(b)\cdot h(b)<0.$$

(ii)
Assume that $D\ne0$ in $(a,b]$. Then Proposition~\ref{prop-lambda} implies
$\lambda_1(b)>0$ and $\Psi(h)\geq\lambda_1(b)\|h\|_{X_b}^2$ for $h\in X_b$.
If $\IN_b=\emptyset$, then $X_b=W^{1,2}_{\c{D}}$, hence we are done.

Next assume that $\IN_b\ne\emptyset$ and 
$\c{B}\tilde h(b)\cdot \tilde h(b)>0$ for any $\tilde h\in H_{\c{D},b}\setminus\{0\}$
(hence $\c{B}\tilde h(b)\cdot \tilde h(b)\geq c_1\|\tilde h\|_{1,2}^2$ for some $c_1>0$
due to $\dim H_{\c{D},b}<\infty$), 
and let $h\in W^{1,2}_{\c{D}}$ be fixed.
Since $D(b)\ne0$, there exists $\alpha\in \R^N$ such that 
$\tilde h:=\sum_k\alpha_kh^{(k)}$ satisfies $\tilde h(b)=h(b)$.
In particular, $\tilde h\in H_{\c{D},b}$.
Set $\hat h:=h-\tilde h$. Then $\hat h\in X_b$, hence
$\Psi(\hat h)\geq\lambda_1(b)\|\hat h\|_{X_b}^2$.
In addition, $\Psi(\tilde h)=\c{B}\tilde h(b)\cdot\tilde h(b)\geq
\ c_1\|\tilde h\|_{1,2}^2$.
Since $\Psi$ is a quadratic functional, we have 
$\Psi''(\tilde h)(\hat h,\hat h)=2\Psi(\hat h)$ and $\Psi'''=0$.
Using \eqref{Psiprime} and integration by parts we also obtain
$$\Psi'(\hat h)\tilde h=\Psi'(\tilde h)\hat h=
2\int_a^b\c{A}\tilde h\cdot\hat h\,dx
+2\c{B}\tilde h\cdot \hat h\Big|_a^b=0,$$
hence there exists $c>0$ such that
$$\Psi(h)=\Psi(\tilde h+\hat h)
=\Psi(\tilde h)+\Psi'(\tilde h)\hat h+\frac12\Psi''(\tilde h)(\hat h,\hat h)  
=\Psi(\tilde h)+\Psi(\hat h)\geq c\|h\|^2_{1,2}.$$

(iii)
Let $h\in H_0$ and $\c{B}_{i}h(b)\ne0$ for some $i\in\IN_b$.
Then $\c{A}h=0$, $h(a)\in\R^N_{\c{D},a}$, $\c{B}h(a)\in\R^N_{\c{N},a}$
and $h(b)=0$, hence
$$
\Psi(h)=\int_a^b\c{A}h\cdot h\,dx+\c{B}h\cdot h\Big|_a^b= 0.$$
Notice also that $h\ne0$ due to $\c{B}_{i}h(b)\ne0$.
Since $D\ne0$ in $(a,b)$, $D(b)=0$,
$\lim_{y\to a+}\lambda_1(y)=1$ and $\lambda_1$ is continuous
and nonincreasing, Proposition~\ref{prop-lambda}(i) implies $\lambda_1(b)=0$,
hence
$h$ is a global minimizer of $\Psi$ in $X_b$.
Choose $\tilde h\in C^1_{\c{D}}$ with $\tilde h(a)=0$, $\tilde
h_j(b)=\delta_{ij}$ for $j=1,2,\dots,N$.
Then
$$ \Psi'(h)\tilde h= 2\int_a^b\c{A}h\cdot\tilde h\,dx+2\c{B}h\cdot\tilde h\Big|_a^b
= 2\c{B}_ih(b)\ne0,$$
hence 
$$ \Psi(h+\eps\tilde h)=\eps\Psi'(h)\tilde h+o(\eps) <0$$
provided $|\eps|$ is small enough and $\eps\c{B}_ih(b)<0$.

If $\ID_b=\emptyset$ and $h\in H_0\setminus\{0\}$,
then $\c{A}h=0$ and $h(b)=0$, hence 
the uniqueness of the initial value problem
for the Jacobi equation implies the existence
of $i\in\IN_b=I$ such that
$\c{B}_ih(b)\ne0$.
\end{proof}

\begin{remark}\label{rem-Jacobi}\rm
(i)  If $\Psi$ is positive semidefinite but not positive definite,
then there exists $h^*\in W^{1,2}_{\c{D}}\setminus\{0\}$
such that $0=\Psi(h^*)=\inf_{W^{1,2}_\c{D}}\Psi$
and $h^*$ can be determined from our analysis.
For example, if $N=1$ and $\ID_a=\ID_b=\emptyset$ (cf.~Theorem~\ref{thm-Jacobi1}),
then $h^*$ is a positive (or negative) solution of the Jacobi equation satisfying $\c{B}h^*(a)=\c{B}h^*(b)=0$. 
If $\Phi$ depends smoothly on a parameter $\theta$, 
$u^0$ is a critical point of $\Phi$ for any $\theta$,
and $u^0$ is (or is not, respectively) a weak minimizer for $\theta>\theta^*$ (or $\theta<\theta^*$, respectively),
then the critical parameter $\theta^*$ corresponds to the case where $h^*$ exists.
(Such situation occurs, for example, in the study of stability of a twisted rod in
Section~\ref{sec-twist}.) 
In this case one can expect bifurcation for the problem $\Phi'(u)=0$ at $\theta=\theta^*$
in the direction of $h^*$, cf. \cite[Theorem 5.6]{CRS}.

(ii) Let $h^{(k)}$, $k=1,2,\dots,N$, be as in Theorem~\ref{thm-Jacobi}, $\xi\in\R^N$
and $h^\xi:=\sum_k\xi_kh^{(k)}$.
Set $\m{A}:=(a_{kl})_{k,l=1}^N$,
where $a_{kl}=\c{B}h^{(k)}(b)\cdot h^{(l)}(b)$,
and 
$$\Xi_{\c{D}}:=\{\xi\in\R^N:h^\xi(b)\in\R^N_{\c{D},b}\}.$$  
Then $\c{B}h^\xi(b)\cdot h^\xi(b)=\m{A}\xi\cdot\xi$,
i.e. the condition  $\c{B}h(b)\cdot h(b)>0$ for any $h\in H_{\c{D},b}\setminus\{0\}$
in Theorem~\ref{thm-Jacobi}(ii), for example, 
is equivalent to $\m{A}\xi\cdot\xi>0$ for any $\xi\in\Xi_\c{D}\setminus\{0\}$. 
In particular, if $\ID_b=\emptyset$ (and $D(b)\ne0$),
then that condition is equivalent to the positive definiteness of the matrix $\m{A}$. 
Notice also that $a_{kl}=a_{lk}$ due to
$2a_{kl}=\Psi'(h^{(k)})h^{(l)}$ and $\Psi'(h^{(k)})h^{(l)}=\Psi'(h^{(l)})h^{(k)}$.

(iii)  
Assertions \eqref{iiPsi0} or \eqref{ih0} show that some of the assumptions in Theorem~\ref{thm-Jacobi}
are sufficient for the positivity or the negativity of $\Psi$, respectively.   
We will show that those assumptions are also necessary, at least in some cases.

Let $\Psi$ be positive definite in $W^{1,2}_\c{D}$. Since $X_b\subset W^{1,2}_\c{D}$,
$\Psi$ is also positive definite in $X_b$ and Proposition~\ref{prop-lambda}(i)
implies $D\ne0$ in $[a,b]$.
If $\IN_b\ne\emptyset$ and $h\in H_{\c{D},b}\setminus\{0\}$,
then $h\in W^{1,2}_\c{D}$, $\c{B}h(a)\in\R^N_{\c{N},a}$, $\c{A}h=0$, hence
$$ 0<\Psi(h)=\int_a^b\c{A}h\cdot h\,dx+\c{B}h\cdot h\Big|_a^b=\c{B}h(b)\cdot h(b),$$
so that the assumptions in  Theorem~\ref{thm-Jacobi}(ii) are satisfied.
This fact and \eqref{iiPsi0} show that the positive definiteness of $\Psi$
in $W^{1,2}_\c{D}$ and the assumptions of Theorem~\ref{thm-Jacobi}(ii) 
are equivalent.

Let $\Psi(\bar h)<0$ for some $\bar h\in W^{1,2}_\c{D}$ and 
\begin{equation} \label{IDbINb}
\ID_b=\emptyset \ \hbox{ or }\ \IN_b=\emptyset.
\end{equation}
Assume that the assumptions of Theorem~\ref{thm-Jacobi}(i) are not satisfied.
Then $D\ne0$ in $(a,b)$ (hence $\lambda_1(b)\geq0$ due to Proposition~\ref{prop-lambda}(i))
and either $\IN_b=\emptyset$ or $\c{B}h(b)\cdot h(b)\geq0$ for any $h\in H_{\c{D},b}$.
If $\IN_b=\emptyset$, then $W^{1,2}_\c{D}=X_b$, hence
$\Psi\geq0$ in $W^{1,2}_\c{D}$, which is a contradiction.
Consequently, $\IN_b\ne\emptyset$, 
$\c{B}h(b)\cdot h(b)\geq0$ for any $h\in H_{\c{D},b}$ and $\ID_b=\emptyset$
(due to \eqref{IDbINb}).
If $D(b)\ne0$, then there exists $\tilde h\in H_{\c{D},b}$ such that 
$\tilde h(b)=\bar h(b)$. 
Set $\hat h:=\bar h-\tilde h\in X_b$. Then
similarly as in the proof of Theorem~\ref{thm-Jacobi}(ii) we obtain
$$ 0>\Psi(\bar h)=\Psi(\tilde h+\hat h)=\Psi(\tilde h)+\Psi(\hat h)
   \geq\c{B}\tilde h(b)\cdot\tilde h(b)+\lambda_1(b)\|\hat h\|_{X_b}^2\geq0,$$
which is a contradiction.
Consequently, $D(b)=0$. Since $\ID_b=\emptyset$ implies \eqref{cond-not},
all assumptions of Theorem~\ref{thm-Jacobi}(iii) are satisfied.
These considerations and \eqref{ih0} show that if \eqref{IDbINb} is true,
then the condition $\Psi(\bar h)<0$ for some $\bar h\in W^{1,2}_\c{D}$
is satisfied if and only if the assumptions of Theorem~\ref{thm-Jacobi}(i)
or the assumptions of Theorem~\ref{thm-Jacobi}(iii) are satisfied.
\qed
\end{remark}

%----------------------------------------
\section{Stability of a twisted rod}
\label{sec-twist}

In this section we use Theorem~\ref{thm-Jacobi} in order to determine
the stability of an unbuckled state 
of an inextensible, unshearable, isotropic Kirchhoff rod.
Under suitable assumptions the strain energy of the rod is given by
$$ \Phi(u)=\int_0^1\Bigl(\frac{A}2\bigl((u_1')^2+(u_2')^2\sin^2u_1\bigr)
    +\frac{C}2(u_3'+u_2'\cos u_1)^2+FL^2\sin u_1\cos u_2\Bigr)\,dx,$$
where $u_1,u_2,u_3$ are so called Euler angles describing the orientation of the director basis,
$A,C>0$ are constants, $L$ is the rod-length and $F\in\R$ is an external
terminal load; the rod is oriented horizontally (along the $x$ axis), see \cite[(9)]{MR}.
The unbuckled state is given by $u^0(x):=(\frac\pi2,0,2\pi Mx)$
where $M$ is a twist parameter. 
Notice that $u^0$ is an extremal satisfying the natural boundary
conditions $f_{p_i}^0(x)=0$ for $i=1,2$ and $x=0,1$.
The stability of $u^0$ was studied in \cite{MR} under
the Dirichlet boundary conditions $u_3(x)=u^0_3(x)$ for $x=0,1$,
and one of the following sets of boundary conditions for $u_1,u_2$:
\begin{equation} \label{MR-D}
 u_1(0)=u_1(1)=\pi/2,\qquad u_2(0)=u_2(1)=0,
\end{equation} 
\begin{equation} \label{MR-ND}
 u_1(0)=u_1(1)=\pi/2,\qquad u'_2(0)=u'_2(1)=0,
\end{equation} 
\begin{equation} \label{MR-N}
 u'_1(0)=u'_1(1)=0,\qquad u'_2(0)=u'_2(1)=0.
\end{equation} 
The results in \cite{MR} are essentially optimal in case \eqref{MR-D},
but the results in cases \eqref{MR-ND} and \eqref{MR-N}
are only partial, leaving several open problems.
Notice that the Neumann boundary conditions are not the same as the natural
boundary conditions in general (see \cite{Olver} for related issues), but
one can easily show (see Proposition~\ref{prop-Neumann} and Remark~\ref{rem-Neumann}
in the Appendix) 
that the problem of stability of $u^0$ considered in
\cite{MR}  in cases \eqref{MR-ND} and \eqref{MR-N} 
is equivalent to the question whether $u^0$ is a weak minimizer
of $\Phi$ in $u^0+C^1_{\c{D}}$ with $\IN_0=\IN_1=\{2\}$ and
$\IN_0=\IN_1=\{1,2\}$, respectively; hence
we can use Theorem~\ref{thm-Jacobi} in order to solve those problems. 
In fact, we will consider all possible subsets $\IN_0,\IN_1$ of $\{1,2\}$,
and in each case we will find the borderline
between the stability and instability (i.e.~between the situations
when $u^0$ is and is not a weak minimizer, respectively).
On the other hand, we will always assume $3\in\ID_0\cap\ID_1$,
i.e. we will always consider the Dirichlet boundary conditions for the third
component $u_3$.
   
In order to have a more graphic notation, given $\IN_0,\IN_1\subset\{1,2\}$,
we denote the corresponding case by 
$\bigl(\strut^{c_0^1c_1^1}_{c_0^2c_1^2}\bigr)$,
where $c_j^i=\c{N}$ if $i\in\IN_j$, $c_j^i=\c{D}$ if $i\in\ID_j$, 
$i=1,2$, $j=0,1$.
For example, $\bigl(\strut^{\c{D}\c{D}}_{\c{N}\c{N}}\bigr)$
corresponds to the case $\IN_0=\IN_1=\{2\}$, i.e.~\eqref{MR-ND},
and $\bigl(\strut^{\c{N}\c{N}}_{\c{N}\c{N}}\bigr)$
corresponds to the case $\IN_0=\IN_1=\{1,2\}$, i.e.~\eqref{MR-N}.
Set also
\begin{equation} \label{alphabeta}
 \alpha:=\frac{2\pi CM}{A},\quad \beta:=-\frac{FL^2}{A},\quad 
\gamma:=\sqrt{\Big|\beta-\frac14\alpha^2\Big|},\quad \delta:=\frac\alpha2,\quad
\theta:=\frac{2\gamma\delta}{\gamma^2+\delta^2}. 
\end{equation}
We will show that we may assume $\alpha>0$, and
for any $\bigl(\strut^{c_0^1c_1^1}_{c_0^2c_1^2}\bigr)$ with $c_j^i\in\{\c{D},\c{N}\}$
we will find a function
$g=g\strut^{c_0^1c_1^1}_{c_0^2c_1^2}:(0,\infty)\to\R:\alpha\mapsto\beta$ 
which describes the borderline between stability and instability.
In the particular cases
\eqref{MR-D}, \eqref{MR-ND} and \eqref{MR-N}
we will also use the notation
$$ g_D:=g^{\c{D}\c{D}}_{\c{D}\c{D}}, \quad 
  g_M:=g^{\c{D}\c{D}}_{\c{N}\c{N}}, \quad\hbox{and}\quad 
  g_N:=g^{\c{N}\c{N}}_{\c{N}\c{N}}, $$
respectively
(the notation $g_M$ reflects the fact that case \eqref{MR-ND}
is called ``Mixed'' in \cite[(13)]{MR}).

\begin{proposition} \label{prop-twist}
Let $u^0$ be as above, $\alpha>0$, and let $\IN_0,\IN_1\subset\{1,2\}$ be fixed.
Then there exists a continuous function $g:(0,\infty)\to\R$ having the properties mentioned above, i.e.
if $\beta>g(\alpha)$ (or $\beta<g(\alpha)$, resp.),
then $u^0$ is a strict weak minimizer (or is not a weak minimizer, resp.).

\strut\hbox{\rm(i)} Let $\ID_0\cap\{1,2\}\ne\emptyset\ne\ID_1\cap\{1,2\}$.
Then
\begin{equation} \label{cases1}
g^{\c{D}\c{D}}_{\c{N}\c{D}}=g^{\c{D}\c{D}}_{\c{D}\c{N}}=g^{\c{D}\c{N}}_{\c{D}\c{D}}=g^{\c{N}\c{D}}_{\c{D}\c{D}},\quad
g^{\c{D}\c{N}}_{\c{N}\c{D}}=g^{\c{N}\c{D}}_{\c{D}\c{N}},\quad
g^{\c{D}\c{D}}_{\c{N}\c{N}}=g^{\c{N}\c{N}}_{\c{D}\c{D}}(=g_M),
\end{equation}
\begin{equation} \label{g4}
 \left.\begin{aligned}
  g_D(\alpha)&=\frac{\alpha^2}{4}-\pi^2,\quad
  g^{\c{D}\c{D}}_{\c{N}\c{D}}(\alpha)=\frac{\alpha^2}4-\frac{\pi^2}4,&&\\
  g^{\c{D}\c{N}}_{\c{N}\c{D}}(\alpha)&=(k+{\scriptstyle\frac12})\pi(\alpha-(k+{\scriptstyle\frac12})\pi) && 
       \kern-15mm\hbox{if }\alpha\in[2k\pi,2(k+1)\pi],\quad k=0,1,2,\dots,  \\ 
 g_M(\alpha)&=k\pi(\alpha-k\pi) && \kern-20mm\hbox{if }\alpha\in[(2k-1)\pi,(2k+1)\pi],\ k=0,1,2,\dots.
\end{aligned}\ \right\}
\end{equation}

\strut\hbox{\rm(ii)} Let either $\ID_0\cap\{1,2\}=\emptyset$ or $\ID_1\cap\{1,2\}=\emptyset$.
Then
\begin{equation} \label{cases2}
\qquad g^{\c{N}\c{D}}_{\c{N}\c{D}}=g^{\c{D}\c{N}}_{\c{D}\c{N}},\quad
g^{\c{N}\c{N}}_{\c{N}\c{D}}=g^{\c{N}\c{N}}_{\c{D}\c{N}},\quad
g^{\c{N}\c{D}}_{\c{N}\c{N}}=g^{\c{D}\c{N}}_{\c{N}\c{N}},
\end{equation}
$$ \begin{aligned}
 g_N(\alpha)&=\inf\{\beta\geq\frac12\alpha^2:
(1-\theta^2)\cosh(2\gamma)+\theta^2\cos(2\delta)=1\}\in[\frac12\alpha^2,\alpha^2],\\
 g^{\c{N}\c{D}}_{\c{N}\c{D}}(\alpha)&=\begin{cases}
  \sup\{\beta\in(\frac14\alpha^2,\frac12\alpha^2):(\alpha^2-2\beta)\cosh(2\gamma)=2\beta\} &\hbox{if }\alpha>2,\\
  \frac14\alpha^2 &\hbox{if }\alpha=2,\\
  \sup\{\beta\in(\frac14(\alpha^2-\pi^2),\frac14\alpha^2):
  (\alpha^2-2\beta)\cos(2\gamma)=2\beta\} & \hbox{if }\alpha\in(0,2),
  \end{cases}\\
 g^{\c{N}\c{N}}_{\c{N}\c{D}}(\alpha)&=
 \inf\{\beta\geq\beta_\alpha:
  (\gamma^2-\delta^2)\sinh(2\gamma)=2\gamma\delta\sin(2\delta)\}, 
\ \ \beta_\alpha:=\begin{cases} \frac12\alpha^2 &\hbox{if }\alpha\leq\pi,\\
                            g^{\c{N}\c{D}}_{\c{N}\c{D}}(\alpha) &\hbox{if }\alpha>\pi, \end{cases} \\
 g^{\c{N}\c{D}}_{\c{N}\c{N}}(\alpha)&=\begin{cases}
 \inf\{\beta\geq g^{\c{N}\c{D}}_{\c{N}\c{D}}(\alpha):
 (\gamma^2-\delta^2)\sinh(2\gamma)=-2\gamma\delta\sin(2\delta)\} &\hbox{if }\alpha\geq\alpha_0,\\
 \inf\{\beta\geq g^{\c{N}\c{D}}_{\c{N}\c{D}}(\alpha):\xi_1^2\sin\xi_2\cos\xi_1=\xi_2^2\sin\xi_1\cos\xi_2\} 
     &\hbox{if }\alpha\in(\frac12\pi,\alpha_0),\\
  0 &\hbox{if }\alpha\in(0,\frac12\pi],
 \end{cases} 
\end{aligned}
$$ 
where $\xi_i:=-\frac12\alpha\pm\gamma$ and $\alpha_0>0$ is defined by $\alpha_0=2\sin\alpha_0$.
\end{proposition}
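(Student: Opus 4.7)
The plan is a direct application of Theorem~\ref{thm-Jacobi} together with Remark~\ref{rem-Jacobi}(ii). First, I compute the Hessian of $f$ at $u^0$: the only nonzero entries are $f^0_{p_1p_1}=f^0_{p_2p_2}=A$, $f^0_{p_3p_3}=C$, $f^0_{p_2u_1}=f^0_{u_1p_2}=-2\pi CM$, and $f^0_{u_1u_1}=f^0_{u_2u_2}=-FL^2$; in particular \eqref{f-conv} holds. Since $3\in\ID_0\cap\ID_1$ and $h_3$ decouples from $(h_1,h_2)$ in the Hessian, the $h_3$-contribution to $\Psi$ is $C\int_0^1(h_3')^2\,dx$, which is non-negative and vanishes only if $h_3\equiv 0$. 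The problem thus reduces (up to the factor $A$) to positive definiteness of
$$\tilde\Psi(h_1,h_2)=\int_0^1\!\bigl[(h_1')^2+(h_2')^2-2\alpha h_1h_2'+\beta(h_1^2+h_2^2)\bigr]\,dx,$$
with reduced Jacobi system $h_1''+\alpha h_2'-\beta h_1=0$, $h_2''-\alpha h_1'-\beta h_2=0$ and reduced natural boundary operators $\tilde{\c{B}}_1 h=h_1'$, $\tilde{\c{B}}_2 h=h_2'-\alpha h_1$.

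Second, I set $z:=h_1+ih_2$ and $\tilde z(x):=e^{-i\delta x}z(x)$. A direct computation shows that the Jacobi system collapses to $\tilde z''=(\beta-\delta^2)\tilde z$, whose real fundamental solutions are $e^{\pm\gamma x}$, $\{1,x\}$, or $\cos(\gamma x),\sin(\gamma x)$ according as $\beta>\delta^2$, $\beta=\delta^2$, or $\beta<\delta^2$. Two symmetries reduce the sixteen a priori distinct boundary configurations $\bigl(\strut^{c_0^1c_1^1}_{c_0^2c_1^2}\bigr)$: the reflection $(h_1,h_2)\mapsto(h_1,-h_2)$ sends $\alpha\mapsto-\alpha$ without changing which components carry $\c{D}$ or $\c{N}$ data, so $g(\alpha)=g(-\alpha)$ and one may assume $\alpha>0$; and the change of variable $x\mapsto 1-x$ combined with that reflection interchanges the two endpoints, giving the identifications in \eqref{cases1}--\eqref{cases2}.

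Third, in each remaining case I construct two linearly independent real solutions $h^{(1)},h^{(2)}$ of the reduced Jacobi system satisfying the initial conditions at $x=0$ dictated by $\IN_0$ (Dirichlet $h_i(0)=0$ for $i\in\ID_0\cap\{1,2\}$ and natural $\tilde{\c{B}}_ih(0)=0$ for $i\in\IN_0$). The borderline $\beta=g(\alpha)$ is then characterized by the largest $\beta$ at which either the Wronskian-type determinant $D(x):=\det(h^{(1)}(x),h^{(2)}(x))$ vanishes at some $x\in(0,1]$, or, when $\IN_1\neq\emptyset$, the $2\times 2$ boundary matrix $\m{A}=(\tilde{\c{B}}h^{(k)}(1)\cdot h^{(l)}(1))_{k,l}$ of Remark~\ref{rem-Jacobi}(ii) first fails to be positive definite on the subspace $\Xi_{\c{D}}\subset\R^2$ determined by $\ID_1$. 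Strict weak minimality for $\beta>g(\alpha)$ then follows from Theorem~\ref{thm-Jacobi}(ii), and instability for $\beta<g(\alpha)$ follows from Theorem~\ref{thm-Jacobi}(i) or (iii); continuity of $g$ is a consequence of the continuous dependence of $D$ and $\m{A}$ on $(\alpha,\beta)$.

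The main obstacle is to identify, case by case, which of these two mechanisms is activated first as $\beta$ decreases from $+\infty$, in particular in the purely-natural and mixed-natural cases $\bigl(\strut^{\c{N}\c{N}}_{\c{N}\c{N}}\bigr)$, $\bigl(\strut^{\c{N}\c{D}}_{\c{N}\c{D}}\bigr)$, $\bigl(\strut^{\c{N}\c{N}}_{\c{N}\c{D}}\bigr)$, $\bigl(\strut^{\c{N}\c{D}}_{\c{N}\c{N}}\bigr)$, where the transition between the hyperbolic regime $\beta>\delta^2$ and the oscillatory regime $\beta<\delta^2$ produces the piecewise definitions, the explicit $\sup$/$\inf$ descriptions, and the transition values $\alpha=\pi$, $\alpha=2$, $\alpha=\alpha_0$ appearing in the formulas for $g_N$, $g^{\c{N}\c{D}}_{\c{N}\c{D}}$, $g^{\c{N}\c{N}}_{\c{N}\c{D}}$, $g^{\c{N}\c{D}}_{\c{N}\c{N}}$. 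In the Dirichlet-dominated cases listed in (i), the conditions reduce to elementary trigonometric equations (for instance $\sin\gamma=0$ yielding $g_D(\alpha)=\alpha^2/4-\pi^2$), so the explicit formulas in \eqref{g4} follow by direct computation.
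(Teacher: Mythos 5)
Your overall strategy coincides with the paper's: decouple $h_3$, reduce to the quadratic form $\tilde\Psi(h_1,h_2)$ with the reduced Jacobi system and boundary operators $\c{B}_1h=h_1'$, $\c{B}_2h=h_2'-\alpha h_1$, solve the Jacobi system explicitly, and read off stability from Theorem~\ref{thm-Jacobi} via $D$ and the boundary matrix of Remark~\ref{rem-Jacobi}(ii). Your complexification $z=h_1+ih_2$, $\tilde z=e^{-i\delta x}z$, reducing the system to $\tilde z''=(\beta-\delta^2)\tilde z$, is correct and is a tidier way to generate the fundamental solutions than the paper's ad hoc formulas (which are exactly the real and imaginary parts of $e^{-i\delta x}e^{\pm\gamma x}$, etc.). Two points, however, prevent this from being a proof.

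First, a small but real gap in the symmetry reduction. The endpoint swap $x\mapsto1-x$ composed with $h_2\mapsto-h_2$ only interchanges the two columns of $\bigl(\strut^{c_0^1c_1^1}_{c_0^2c_1^2}\bigr)$; it yields all of \eqref{cases2} but not all of \eqref{cases1}. The identifications $g^{\c{D}\c{D}}_{\c{N}\c{D}}=g^{\c{N}\c{D}}_{\c{D}\c{D}}$ and $g^{\c{D}\c{D}}_{\c{N}\c{N}}=g^{\c{N}\c{N}}_{\c{D}\c{D}}$ require in addition the component swap $h_1\leftrightarrow h_2$ together with $\alpha\mapsto-\alpha$, which leaves $\tilde\Psi$ invariant only because $\int_0^1h_2'h_1\,dx=-\int_0^1h_1'h_2\,dx$; this integration by parts is legitimate precisely because in case (i) each endpoint carries at least one Dirichlet component. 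You never invoke this second symmetry.

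Second, and more substantially, the proposition's content \emph{is} the explicit formulas, and you defer exactly that part as ``the main obstacle.'' Declaring $g(\alpha)$ to be ``the largest $\beta$ at which one of the two mechanisms is activated'' presupposes that the set of $\beta$ for which $\tilde\Psi$ is positive definite is an interval $(g(\alpha),\infty)$ and that $\tilde\Psi$ fails to be positive semidefinite for every $\beta<g(\alpha)$; this requires the monotonicity of $\tilde\Psi$ in $\beta$ (the paper uses it repeatedly, e.g. to pass from $\beta>g(\alpha)$ to $\beta\geq g(\alpha)$ and to rule out stability below the borderline), and you do not state it. Beyond that, none of the case-by-case computations are performed: in case (i) one must actually compute $D(x)=2-2\cos(2\gamma x)$ or $\sin(2\gamma x)$ and the signs of $\c{B}h(1)\cdot h(1)=(\xi_2-\xi_1)\sin(\xi_2-\xi_1)\sin\xi_1\sin\xi_2$ (and its cosine analogue) to obtain \eqref{g4}; in case (ii) one must work through the four regimes $\beta=\tfrac12\alpha^2$, $\beta=\tfrac14\alpha^2$, $\beta>\tfrac14\alpha^2$ with $\beta\ne\tfrac12\alpha^2$, and $\beta<\tfrac14\alpha^2$, compute $D$ (e.g. $4(\gamma^2-\delta^2)((\gamma^2-\delta^2)\cosh(2\gamma x)+\gamma^2+\delta^2)$), the entries of $\m{A}$, and the boundary products, and then decide which degeneracy occurs first; this is what produces the transcendental equations, the thresholds $\alpha=2$, $\alpha=\pi$, $\alpha_0=2\sin\alpha_0$, and the $\sup/\inf$ structure stated in the proposition. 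As written, the proposal establishes the framework but not the statement.
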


%-----------------------------------------
\begin{remark} \label{rem-twist} \rm
(i) If $u^0$ is a weak minimizer of $\Phi$ with given $\IN_0,\IN_1$ (and the borderline function $g$),
then it remains a weak minimizer if we replace $\IN_x$ with any subset of $\IN_x$ for $x=0,1$,
since the set $C^1_\c{D}$ becomes smaller. Therefore the new borderline function $\tilde g$
has to satisfy $\tilde g\leq g$. In particular, $g_D\leq g\leq g_N$ for any borderline function $g$,
$g^{\c{N}\c{D}}_{\c{N}\c{D}}\leq \min(g^{\c{N}\c{N}}_{\c{N}\c{D}},g^{\c{N}\c{D}}_{\c{N}\c{N}})$,
and $g^{\c{N}\c{D}}_{\c{N}\c{D}}(\alpha)\geq g^{\c{D}\c{D}}_{\c{N}\c{D}}(\alpha)=\frac14(\alpha^2-\pi^2)$.
We also have $g_N(\alpha)\leq\alpha^2$ since the Cauchy inequality implies that 
the corresponding functional $\Psi$ is positive definite for $\beta>\alpha^2$.

%------------------------------------------------------------
\begin{figure}[ht]
\centering
\begin{picture}(300,385)(-15,-60)
\unitlength=1pt
\put(0,0){\line(1,0){280}}
\put(280,0){\vector(1,0){2}}
\put(300,-15){\makebox(0,0)[c]{$\alpha=\frac{2\pi CM}A$}} 
\put(50,-2){\line(0,1){4}}
\put(50,-10){\makebox(0,0)[c]{$\pi$}}
\put(100,-2){\line(0,1){4}}
\put(100,-10){\makebox(0,0)[c]{$2\pi$}}
\put(150,-2){\line(0,1){4}}
\put(150,-10){\makebox(0,0)[c]{$3\pi$}}
\put(200,-2){\line(0,1){4}}
\put(200,-10){\makebox(0,0)[c]{$4\pi$}}
\put(250,-2){\line(0,1){4}}
\put(250,-10){\makebox(0,0)[c]{$5\pi$}}
\put(0,-50){\line(0,1){360}}
\put(0,310){\vector(0,1){2}}
\put(35,310){\makebox(0,0)[c]{$\beta=-\frac{FL^2}{A}$}} 
\put(-10,-5){\makebox(0,0)[c]{$0$}}
\put(-2,50){\line(1,0){4}}
\put(-10,50){\makebox(0,0)[c]{$\pi^2$}}
\put(-13,-50){\makebox(0,0)[c]{$-\pi^2$}}
\put(-2,100){\line(1,0){4}}
\put(-12,100){\makebox(0,0)[c]{$2\pi^2$}}
\put(-2,150){\line(1,0){4}}
\put(-12,150){\makebox(0,0)[c]{$3\pi^2$}}
\put(-2,200){\line(1,0){4}}
\put(-12,200){\makebox(0,0)[c]{$4\pi^2$}}
\put(-2,250){\line(1,0){4}}
\put(-12,250){\makebox(0,0)[c]{$5\pi^2$}}
\put(-2,300){\line(1,0){4}}
\put(-12,300){\makebox(0,0)[c]{$6\pi^2$}}
\bezier100(0,0)(122,0)(250,310)
\put(25,32){\makebox(0,0){$\frac14\alpha^2$}} 
\put(25,23){\line(0,-1){18}}
\put(25,5){\vector(0,-1){2}}
\ecol
\bezier700(0,-50)(120,-50)(250,250)
\put(206,135){\makebox(0,0){$g_{D}$}} 
\refcol
\bezier700(0,-12)(128,-12)(250,300)
\put(138,80){\line(1,0){35}}
\put(139,80){\vector(-1,0){2}}
\put(184,80){\makebox(0,0){$g^{\c{D}\c{D}}_{\c{N}\c{D}}$}}
\blue
\bezier500(0,-12)(50,13)(100,38)
\bezier500(100,38)(150,113)(200,188)
\bezier500(200,188)(225,249)(250,310)
\put(116,60){\line(1,0){40}}
\put(117,60){\vector(-1,0){2}}
\put(169,60){\makebox(0,0){$g^{\c{D}\c{N}}_{\c{N}\c{D}}$}}
\put(50,13){\makebox(0,0){$\bullet$}}
\put(100,38){\makebox(0,0){$\bullet$}}
\put(150,113){\makebox(0,0){$\bullet$}}
\put(200,188){\makebox(0,0){$\bullet$}}
\put(250,310){\makebox(0,0){$\bullet$}}
\red
\put(0,0){\red\line(1,0){50}}
\bezier500(50,0)(100,50)(150,100)
\bezier500(150,100)(200,200)(250,300)
\put(280,295){\makebox(0,0){$g_M=g^{\c{D}\c{D}}_{\c{N}\c{N}}$}}
\put(50,0){\makebox(0,0){$\bullet$}}
\put(150,100){\makebox(0,0){$\bullet$}}
\put(250,300){\makebox(0,0){$\bullet$}}
\put(100,50){\makebox(0,0){$\bullet$}}
\put(200,200){\makebox(0,0){$\bullet$}}
\bezier100(50,0)(100,50)(125,263)
\put(115,250){\makebox(0,0){$\overline{g_M}$}}
\bezier20(0,-50)(10,-50)(25,-25)
\bezier20(25,-25)(40,0)(50,0)
\bezier45(50,0)(60,0)(100,26)
\bezier70(100,26)(120,39)(250,48)
\put(240,55){\makebox(0,0){$\underline{g_M}$}}
\put(33,-30){\makebox(0,0){$\underline{g_M}$}}
\black
\end{picture}
\kern-2mm
   \caption{The case $\ID_0\cap\{1,2\}\ne\emptyset\ne\ID_1\cap\{1,2\}$.}
   \label{fig0}
\end{figure}
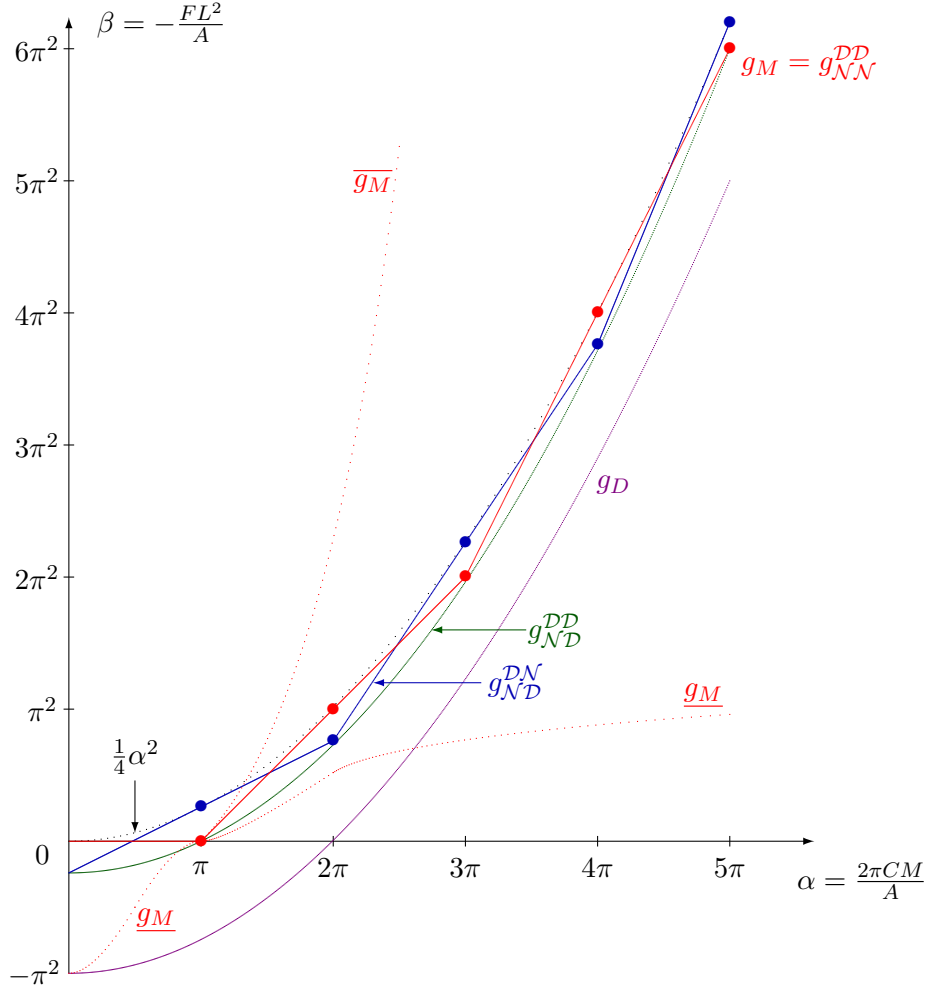

(ii) If $\alpha\in(0,\alpha_0)$ is fixed, then the function
$\Xi(\beta):=\xi_1^2\sin\xi_2\cos\xi_1-\xi_2^2\sin\xi_1\cos\xi_2$ appearing in the
formula for $g^{\c{N}\c{D}}_{\c{N}\c{N}}$ in Proposition~\ref{prop-twist}
has a unique root $\beta^*$ in in the interval
$[g^{\c{N}\c{D}}_{\c{N}\c{D}}(\alpha),\frac14\alpha^2)$:
This follows from our proof, since any root in that interval corresponds
to the case when the corresponding functional $\Psi$
is positive semidefinite but not positive definite,
and the form of $\Psi$ guarantees that, given $\alpha$, this can happen only for one $\beta$.
Consequently,
$$g^{\c{N}\c{D}}_{\c{N}\c{N}}(\alpha)=\sup\{\beta<\frac14\alpha^2:\xi_1^2\sin\xi_2\cos\xi_1=\xi_2^2\sin\xi_1\cos\xi_2\}
\quad\hbox{if }\alpha\in(0,\alpha_0).$$
In addition, our proof implies that if
$\beta^*>g^{\c{N}\c{D}}_{\c{N}\c{D}}(\alpha)$,
then $\Xi$ changes sign at $\beta^*$.
Similarly, if $\alpha>\alpha_0$ (or $\alpha>0$, resp.), 
then the function
$(\gamma^2-\delta^2)\sinh(2\gamma)+2\gamma\delta\sin(2\delta)$
(or 
$(\gamma^2-\delta^2)\sinh(2\gamma)-2\gamma\delta\sin(2\delta)$, resp.)
has a unique root $\beta^*$ in the interval $[g^{\c{N}\c{D}}_{\c{N}\c{D}}(\alpha),\infty)$
(or $[\beta_\alpha,\infty)$, resp.), and it changes sign at $\beta^*$ if $\beta^*>g^{\c{N}\c{D}}_{\c{N}\c{D}}(\alpha)$
(or $\beta^*>\beta_\alpha$, resp.). 
In addition, the estimates in (i) guarantee that that root $\beta^*$
satisfies $\beta^*\leq g_N(\alpha)\leq\alpha^2$.
Analogous statements are true in the case of $g_N$. 

(iii) 
Our definition of $\alpha$ and $\beta$ in \eqref{alphabeta} implies 
that the borderline function $g_M$ was estimated above and below in \cite[Proposition~6]{MR}
by functions
$$ \overline{g_M}(\alpha):=\max(0,\alpha^2-\pi^2)\quad\hbox{and}\quad
   \underline{g_M}(\alpha):=\pi^2(\alpha^2-\pi^2)/(\alpha^2+\pi^2), $$
respectively, see Figure~\ref{fig0}.
Let us also mention that the the upper bound
$\overline{g_N}(\alpha):=\frac14\alpha^2$ for $g_N(\alpha)$
in \cite[Proposition~5]{MR} is incorrect: The error is explained below.

%---------------------------------------------------------------------------
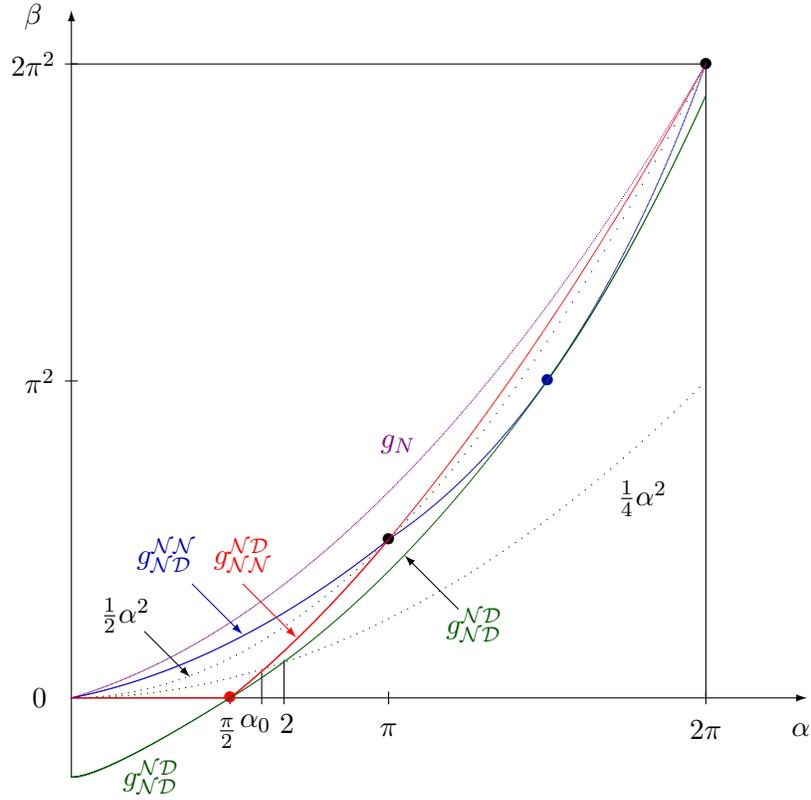
\begin{figure}[ht]
\centering
\begin{picture}(300,300)(-20,-30)
\unitlength=1.2pt 
\put(0,0){\line(1,0){230}}
\put(230,0){\vector(1,0){2}}
\put(230,-10){\makebox(0,0)[c]{$\alpha$}} 
\put(100,-2){\line(0,1){4}}
\put(100,-10){\makebox(0,0)[c]{$\pi$}}
\put(50,-2){\line(0,1){4}}
\put(49,-11){\makebox(0,0)[c]{$\frac\pi2$}}
\put(100,50){\makebox(0,0)[c]{$\bullet$}}
\put(200,200){\makebox(0,0)[c]{$\bullet$}}
\put(60,-2){\line(0,1){10}}
\put(58,-8){\makebox(0,0)[c]{$\alpha_0$}}
\put(67,-2){\line(0,1){14}}
\put(68,-8){\makebox(0,0)[c]{$2$}}
\put(200,-2){\line(0,1){4}}
\put(200,-10){\makebox(0,0)[c]{$2\pi$}}
\put(0,-25){\line(0,1){240}}
\put(0,215){\vector(0,1){2}}
\put(-12,215){\makebox(0,0)[c]{$\beta$}} 
\put(-10,0){\makebox(0,0)[c]{$0$}}
\put(-2,100){\line(1,0){4}}
\put(-10,100){\makebox(0,0)[c]{$\pi^2$}}
\put(-2,200){\line(1,0){4}}
\put(-12,200){\makebox(0,0)[c]{$2\pi^2$}}
\put(200,0){\line(0,1){200}}
\put(0,200){\line(1,0){200}}
\put(180,63){\makebox(0,0)[c]{$\frac14\alpha^2$}}
\put(122,28){\line(-1,1){15}}
\put(107,43){\vector(-1,1){2}}
\put(17,26){\makebox(0,0)[c]{$\frac12\alpha^2$}}
\put(23,21){\line(1,-1){13}}
\put(35.5,8.5){\vector(1,-1){2}}
\bezier100(0,0)(100,0)(200,100)
\bezier100(0,0)(100,0)(200,200)
\blue
\bezier500(0,0)(50,10)(100,50)
\bezier500(100,50)(160,90)(200,200)
\put(30,45){\makebox(0,0)[c]{$g^{\c{N}\c{N}}_{\c{N}\c{D}}$}}
\put(38,36){\line(1,-1){15}}
\put(52,22){\vector(1,-1){2}}
\put(150,100){\makebox(0,0)[c]{$\bullet$}}
\red
\put(0,0){\line(1,0){50}}
\bezier500(50,0)(75,20)(100,50)
\bezier500(100,50)(150,110)(200,200)
\put(54,45){\makebox(0,0)[c]{$g^{\c{N}\c{D}}_{\c{N}\c{N}}$}}
\put(54,36){\line(1,-1){15}}
\put(69,21){\vector(1,-1){2}}
\put(50,0){\makebox(0,0)[c]{$\bullet$}}
\refcol
\bezier500(0,-25)(10,-25)(50,0)
\bezier500(50,0)(100,30)(150,100)
\bezier500(150,100)(175,135)(200,190)
\put(25,-25){\makebox(0,0)[c]{$g^{\c{N}\c{D}}_{\c{N}\c{D}}$}}
\put(127,23){\makebox(0,0)[c]{$g^{\c{N}\c{D}}_{\c{N}\c{D}}$}}
\ecol
\bezier500(0,0)(100,30)(200,200)
\put(103,80){\makebox(0,0)[c]{$g_N$}}
\black
\end{picture}
   \caption{The case $\ID_0\cap\{1,2\}=\emptyset$.}
   \label{fig11}
\end{figure}

(iv)
The function $\hat g(\alpha):=\frac12\alpha^2$
is a good approximation of functions $g$ in Proposition~\ref{prop-twist}(ii) for $\alpha$ large,
see Table~\ref{tab1} and Figure~\ref{fig11}.
The functions $g^{\c{N}\c{D}}_{\c{N}\c{N}},g^{\c{N}\c{N}}_{\c{N}\c{D}}$ oscillate between $g_N$ and $g^{\c{N}\c{D}}_{\c{N}\c{D}}$,
they intersect each other whenever $\alpha=k\pi$, $k=1,2,\dots$,
and then their common values equal $\hat g(\alpha)$ (and also $g_N(\alpha)$ if $k$ is even).
Similarly,
$\min(g^{\c{N}\c{D}}_{\c{N}\c{N}}(\alpha),g^{\c{N}\c{N}}_{\c{N}\c{D}}(\alpha))=g^{\c{N}\c{D}}_{\c{N}\c{D}}(\alpha)$
if $\alpha=(k+\frac12)\pi$, $k=0,1,2,\dots$.
Similar behavior of functions $\tilde g(\alpha)=\frac14\alpha^2$ and 
$g_M$, $g^{\c{D}\c{N}}_{\c{N}\c{D}}$, $g^{\c{D}\c{D}}_{\c{N}\c{D}}$ 
can be observed in Figure~\ref{fig0}.
The formulas for functions $g$ in Proposition~\ref{prop-twist}(ii) 
can be used in the numerical computations of $g$, but they
also can be used in the study of the asymptotic or qualitative behavior
of $g$. For example,
they imply that 
$\lim_{\alpha\to0+}\frac{g_N(\alpha)}{\alpha^2}=1$,
$\lim_{\alpha\to\infty}(\hat g-g^{\c{N}\c{D}}_{\c{N}\c{D}})(\alpha)=0$,
$g^{\c{N}\c{D}}_{\c{N}\c{D}}$ is
$C^1\setminus C^2$ at $\alpha=2$, 
and $g_N$ is $C\setminus C^1$ at $\alpha=2k\pi$, $k=1,2\dots$.

\renewcommand{\arraystretch}{1} 
\begin{table}[ht]
\begin{center}
\begin{tabular}{|c|c|c|c|c|c|c|} 
\hline 
 \strut\lower2mm\vbox to 6.5mm{\vfill} $\alpha/\pi$ & $g_N(\alpha)/\pi^2$ &
 $g^{\c{N}\c{N}}_{\c{N}\c{D}}(\alpha)/\pi^2$ &
 $\hat g(\alpha)/\pi^2$ &
 $g^{\c{N}\c{D}}_{\c{N}\c{N}}(\alpha)/\pi^2$ &
 $g^{\c{N}\c{D}}_{\c{N}\c{D}}(\alpha)/\pi^2$ &
 $\Delta_{\hbox{\small max}}(\alpha)/\pi^2$
\\ 
\hline
\strut\vbox to 4mm{\vfill} 0 &  0 & 0 & 0 & 0 & -0.25 & 0.25\\
0.3& 0.0842& 0.0732& 0.045&  0.0000& -0.1222& 0.2064\\
0.5& 0.2137& 0.1679& 0.125&  0.0000&  0.0000& 0.2137\\
0.7& 0.3792& 0.2820& 0.245&  0.1826&  0.1533& 0.2258\\
1.0& 0.6717& 0.5000& 0.500&  0.5000&  0.4446& 0.2271\\
1.3& 1.0067& 0.8197& 0.845&  0.8663&  0.8129& 0.1938\\
1.5& 1.2549& 1.1032& 1.125&  1.1440&  1.1032& 0.1516\\
1.7& 1.5279& 1.4334& 1.445&  1.4558&  1.4305& 0.0973\\
2.0& 2.0000& 2.0000& 2.000&  2.0000&  1.9923& 0.0076\\
2.5& 3.2058& 3.1274& 3.125&  3.1225&  3.1225& 0.0832\\
3.0& 4.5759& 4.5000& 4.500&  4.5000&  4.4992& 0.0767\\  
3.5& 6.1596& 6.1248& 6.125&  6.1252&  6.1248& 0.0348\\
4.0& 8.0000& 8.0000& 8.000&  8.0000&  7.9999& 0.0001\\
      \hline
    \end{tabular}
  \end{center}
\kern2mm 
\setbox5=\hbox{Numerical values of functions $g$ and 
       $\Delta_{\hbox{\small max}}:=g_N-g^{\c{N}\c{D}}_{\c{N}\c{D}}$ if $\ID_0\cap\{1,2\}=\emptyset$.}
\caption{\box5}
\label{tab1}
\end{table}

(v)
Numerical computations determining the borderlines for stability
could be used also if we did not know the formulas for functions $g$ in Proposition~\ref{prop-twist}.
If $\beta_0<\beta_1$ and the problem
with parameters $(\alpha_0,\beta_0)$ or $(\alpha_0,\beta_1)$ is unstable or stable, respectively, 
then one can set $\beta_2:=(\beta_0+\beta_1)/2$ and
numerically solve the Jacobi equations with suitable initial conditions and
parameters $(\alpha_0,\beta_2)$ (by the Euler method, for example).
If that problem is stable or unstable, then one can
set $\beta_3:=(\beta_0+\beta_2)/2$ or $\beta_3:=(\beta_2+\beta_1)/2$, respectively, 
and solve the problem with parameters $(\alpha_0,\beta_3)$ etc.  
In fact, we used such general approach to compute the numerical values of functions $g_N$ and $g^{\c{N}\c{D}}_{\c{N}\c{D}}$ first,
and we verified a posteriori that the computed critical parameters correspond to the critical
values determined by Proposition~\ref{prop-twist}.

(vi) Let $u^0$ be a weak minimizer.
Then a straightfoward modification
of the proof of Proposition~\ref{prop-ws} shows that $u^0$ 
is also a strong minimizer.
In fact, assume first that there exist $v^k\in W^{1,2}_\c{D}$
such that $r_k:=\|v^k\|_{1,2}\to0$
and $\Phi(u^0+v^k)<\Phi(u^0)$. Since $\Phi\in C^1(W^{1,2})$
is weakly sequentially lower semicontinuous,
we can find a minimizer $u^k$ of $\Phi$ in $\{u\in u^0+W^{1,2}_\c{D}:\|u-u^0\|_{1,2}\leq r_k\}$
and Lagrange multipliers $\lambda_k\leq0$ such that 
$\Phi'(u^k)h=\lambda_k\Theta'(u^k)h$ for any $h\in W^{1,2}_\c{D}$,
where $\Theta(u)=\|u-u^0\|_{1,2}^2$.
The arguments in \cite[Section~2.6]{Ces} guarantee that $u^k\in C^2$ and $u^k$
satisfy the Euler equations
$(F^k_p(x))'=F^k_u(x)$, where $F^k_p(x):=F_p(\lambda_k,x,u^k(x),(u^k)'(x))$ (similarly $F^k_u$)
and $F(\lambda,x,u,p):=f(x,u,p)-\lambda(|p-(u^0)'(x)|^2+|u-u^0(x)|^2)$.
These equations, the particular form of $f,u^0$, the positive definiteness of $F^k_{pp}$
and the convergence $u^k\to u^0$ in $W^{1,2}$ guarantee 
that $\{u^k\}$ is a Cauchy sequence in $W^{2,1}$, hence in $C^1$, 
thus $u^k\to u^0$ in $C^1$. However, this contradicts our assumption
that $u^0$ is a weak minimizer.
Consequently, $u^0$ is a local minimizer in $u^0+W^{1,2}_\c{D}$.
Next assume that there exist $v^k\in C^1_\c{D}$ such that $\|v^k\|_C\to0$
and $\Phi(u^0+v^k)<\Phi(u^0)$. 
Then it is not difficult to show that there exists $c>0$ such that
$0>\Phi(u^0+v^k)-\Phi(u^0)\geq c\|v^k\|^2_{1,2}+o(1)$,
hence $\|v^k\|_{1,2}\to0$, which yields a contradiction
and concludes the proof.
\qed
\end{remark}

\begin{proof}[Proof of Proposition~\ref{prop-twist}]
Notice that $u^0$ is a critical point of $\Phi$ for any choice of $\IN_0,\IN_1\subset\{1,2\}$.
By Proposition~\ref{prop-psi}, 
we have to determine the positivity of functional $\Psi$ in $W^{1,2}_{\c{D}}$.
We have $\Psi(h)=\Psi_1(h_1,h_2)+\Psi_2(h_3)$, where
$$\Psi_1(h_1,h_2)=A\int_0^1\bigl((h'_1)^2+(h'_2)^2-2\alpha h_2'h_1+\beta(h_1^2+h_2^2)\bigr)\,dx,\quad
\Psi_2(h_3)=C\int_0^1(h'_3)^2\,dx.$$
Since the positivity of $\Psi$ does not change if we replace $\alpha$ by
$-\alpha$ (consider $-h_1$ instead of $h_1$), we may assume $\alpha\geq0$.
Since the case $\alpha=0$ is trivial, we assume $\alpha>0$.
Since $\Psi_2$ is positive definite in $W^{1,2}_0([0,1])$,
it is sufficient to study the positivity of the functional
\begin{equation} \label{tildePsi}
\tilde\Psi(h_1,h_2):=\frac1{2A}\Psi_1(h_1,h_2)
=\frac12\int_0^1\bigl((h'_1)^2+(h'_2)^2-2\alpha h_2'h_1+\beta(h_1^2+h_2^2)\bigr)\,dx
\end{equation}
in the space
\begin{equation} \label{tildeW}
\tilde W_\c{D}:=\{h\in W^{1,2}([0,1],\R^2): h_i(j)=0 \hbox{ for } i\in\ID_j,\ i=1,2,\ j=0,1\}.
\end{equation}
In fact, $\Psi$ is positive definite (or semidefinite, resp.) in $W^{1,2}_\c{D}$ 
if and only if $\tilde\Psi$ is positive definite (or semidefinite, resp.) in $\tilde W_\c{D}$.
Therefore, in what follows, we will apply the Jacobi theory from Section~\ref{sec-Jacobi} 
to the functional $\tilde\Psi$ with $\alpha>0$.
Notice that the assumptions in Theorem~\ref{thm-Jacobi} depend 
only on the corresponding functional $\Psi$, and the conclusions can also
be formulated in terms of $\Psi$, see  \eqref{ih0}, \eqref{iiPsi0}.
We will use Theorem~\ref{thm-Jacobi} in this way.
More precisely, we will use assertions \eqref{ih0}, \eqref{iiPsi0}
(with $\Psi$ and $W^{1,2}_\c{D}$ replaced by $\tilde\Psi$ and $\tilde W_\c{D}$, respectively)
to determine the positivity of $\tilde\Psi$ (hence the positivity of $\Psi$)
and then we will use Proposition~\ref{prop-psi} (with $\Psi(h)=\Psi(h_1,h_2,h_3)$) 
to conclude that $u^0$ is (or is not) a minimizer of $\Phi$.  

Notice that the index sets for functional $\tilde\Psi$ 
satisfy $\IDr_j=\ID_j\cap\{1,2\}$
and $\INr_j=\IN_j\cap\{1,2\}=\IN_j$ for $j=1,2$, hence we will use
the notation $\IN_j$ instead of $\INr_j$.
Similarly, the corresponding operators $\tilde{\c{B}}_i$, $i=1,2$ (cf.~\eqref{BA}),
satisfy $\tilde{\c{B}}_i(h_1,h_2)=\c{B}_i(h_1,h_2,0)$ for $i=1,2$, 
and --- without fearing confusion --- we will use the notation
$\c{B}_ih$ instead of $\tilde{\c{B}}_ih$ 
and $\c{B}h:=(\c{B}_1h,\c{B}_2h)$
if $h=(h_1,h_2)$ and $i=1,2$.
The same applies to operators $\c{C}_i$ and $\c{A}_i$.
Since 
\begin{equation} \label{BiCi}
 \c{B}_1h = h_1', \quad \c{B}_2h =-\alpha h_1+h_2', \quad
 \c{C}_1h =\beta h_1-\alpha h_2', \quad \c{C}_2h = \beta h_2,
\end{equation}
the corresponding system of Jacobi equations is
\begin{equation} \label{J2}
\left.\begin{aligned}
 h_1''+\alpha h_2'-\beta h_1 &=0, \\
 h_2''-\alpha h_1'-\beta h_2 &=0,
\end{aligned}\quad\right\}\quad \hbox{in}\ (0,1),
\end{equation}
and the initial conditions for $h^{(1)},h^{(2)}$ in Theorem~\ref{thm-Jacobi} (with $N=2$) are 
$h_i(0)=0$ if  $i\in\IDr_0$ and $i=1,2$, $h_1'(0)=0$ if $1\in\IN_0$, 
and $h_2'(0)=\alpha h_1(0)$ if $2\in\IN_0$.

The existence of continuous borderline functions $g$ follows from the form of $\tilde\Psi$.
Notice that if the index sets $\IDr_0$ and $\IDr_1$ are nonempty, then 
$h_1h_2(0)=h_1h_2(1)=0$ for any $h\in \tilde W_\c{D}$,
hence 
\begin{equation} \label{h1h2}
\int_0^1h_2'h_1\,dx=-\int_0^1h_1'h_2\,dx.
\end{equation}
Identity \eqref{h1h2} shows that the value of $\tilde\Psi$ does not change
if we replace $h_1$ with $h_2$ and $\alpha$ with $-\alpha$.
In general, the value of $\tilde\Psi$ does not change if we replace
$h_i$ with $\tilde h_i(x)=h_i(1-x)$ and $\alpha$ with $-\alpha$.
These two observations guarantee \eqref{cases1} and \eqref{cases2}. 

Let us first consider the cases in Proposition~\ref{prop-twist}(i),
i.e. $\IDr_0\ne\emptyset\ne\IDr_1$. 
Then 
\eqref{h1h2} guarantees
$\int_0^12h_2'h_1\,dx=\int_0^1(h_2'h_1-h_1'h_2)\,dx$
and the Cauchy inequality implies that 
\begin{equation}\label{Psipos1}
\tilde\Psi \hbox{ is positive definite if } \alpha^2<4\beta.
\end{equation}
Hence it is sufficient to study the case $\alpha^2\geq4\beta$. 

Case $\bigl(\strut^{\c{D}\c{D}}_{\c{D}\c{D}}\bigr)$ 
has already been solved in \cite[Proposition 3]{MR}, 
but Theorem~\ref{thm-Jacobi} enables us to 
to show $g_D(\alpha)=\frac{\alpha^2}{4}-\pi^2$ in a simpler way.
Assume $\alpha^2>4\beta$.
We can set
$h^{(1)}(x)=(\sin\xi_1x-\sin\xi_2x,\cos\xi_1x-\cos\xi_2x)$
and
$h^{(2)}(x)=(-\cos\xi_1x+\cos\xi_2x,\sin\xi_1x-\sin\xi_2x)$,
where
$\xi_{1,2}=-\frac12\alpha\pm\gamma$.
The function $D$ in Theorem~\ref{thm-Jacobi} satifies
$D(x)=2-2\cos(\xi_1-\xi_2)x$, hence $D\ne0$ in $(0,1]$
if and only if $|\xi_1-\xi_2|<2\pi$, i.e.~if $\beta>g_D(\alpha)$.
Consequently, if $\beta>g_D(\alpha)$, then $u^0$ is a strict weak minimizer
(this remains true also if $4\beta=\alpha^2$ due to
the monotonicity of $\tilde\Psi$ with respect to $\beta$),
and if $\beta<g_D(\alpha)$, then $u^0$ is not a weak minimizer.

The remaining cases in Proposition~\ref{prop-twist}(i) are
$\bigl(\strut^{\c{D}\c{D}}_{\c{N}\c{D}}\bigr)$,
$\bigl(\strut^{\c{D}\c{N}}_{\c{N}\c{D}}\bigr)$, and $\bigl(\strut^{\c{D}\c{D}}_{\c{N}\c{N}}\bigr)$.
Assume $\alpha^2>4\beta$.
Since $\IN_0=\{2\}$, 
the initial conditions for $h^{(1)},h^{(2)}$ in Theorem~\ref{thm-Jacobi} are
$h_1(0)=0$ and $h_2'(0)=0$.
One can easily check that we can set
$h^{(i)}(x):=(\sin\xi_ix,\cos\xi_ix)$, $i=1,2$, where
$\xi_{1,2}:=-\frac12\alpha\pm\gamma$.  
The function $D$ in Theorem~\ref{thm-Jacobi} satifies
$$D(x)=\sin(\xi_1-\xi_2)x =\sin2\gamma x=\sin\sqrt{\alpha^2-4\beta}\,x,$$
hence 
\begin{equation}\label{D0i}
 \hbox{if \ }\alpha^2-4\beta>\pi^2,\hbox{\ then }
D(x)=0 \hbox{\ for some }x\in(0,1),
\end{equation}
\begin{equation}\label{D0ii}
 \hbox{if \ }0<\alpha^2-4\beta<\pi^2,\hbox{\ then } 
D(x)\ne0 \hbox{\ in }(0,1].\hbox to26mm{\hfill}
\end{equation}
Theorem~\ref{thm-Jacobi}(i) 
(more precisely, assertion \eqref{ih0})
and \eqref{D0i} imply that 
\begin{equation}\label{Psineg1}
\tilde\Psi \hbox{\ is not positive semidefinite if
}\alpha^2-4\beta>\pi^2.
\end{equation}

Let $\IN_1=\emptyset$.
If $0<\alpha^2-4\beta<\pi^2$, then \eqref{D0ii} and
Theorem~\ref{thm-Jacobi}(ii) (more precisely, assertion \eqref{iiPsi0})
guarantee that $\tilde\Psi$ is positive definite.
If $0=\alpha^2-4\beta<\pi^2$ and we replace $\beta$ by $\tilde\beta:=\beta-\eps$
with $\eps>0$ small, then $0<\alpha^2-4\tilde\beta<\pi^2$,
hence the modified functional $\tilde\Psi^{\tilde\beta}$ (with $\beta$ replaced by $\tilde\beta$) 
is positive definite, and the monotonicity of $\tilde\Psi$ with respect to $\beta$
implies that $\tilde\Psi$ is positive definite as well.
These facts together with \eqref{Psipos1} and \eqref{Psineg1} imply
$g^{\c{D}\c{D}}_{\c{N}\c{D}}(\alpha)=\frac{\alpha^2}4-\frac{\pi^2}4$.

If $\IN_1=\{2\}$ and $\alpha^2>4\beta$, 
then $H_{\c{D},b}=\{\tilde h\in\hbox{span}(h^{(1)},h^{(2)}):\tilde h_1(1)=0\}$
is spanned by 
$h:=\sin\xi_2h^{(1)}-\sin\xi_1h^{(2)}$.
We have 
$$ 
B:=\c{B}h(1)\cdot h(1) 
=h'_2(1)h_2(1)=(\xi_2-\xi_1)\sin(\xi_2-\xi_1)\sin\xi_1\sin\xi_2$$
and, assuming $\alpha\in[(2k-1)\pi,(2k+1)\pi]$, $k=0,1,2,\dots$, $\alpha>0$, we have 
$B>0$ or $B<0$
if and only if 
$\beta$ is greater or less than $k\pi(\alpha-k\pi)$,
respectively. Notice that 
\begin{equation} \label{ineq1alphabeta}
\alpha^2/4\geq k\pi(\alpha-k\pi)\geq(\alpha^2-\pi^2)/4.
\end{equation}
These facts, Theorem~\ref{thm-Jacobi}(ii) and \eqref{Psipos1} 
imply that $\tilde\Psi$ is positive definite if $\beta>k\pi(\alpha-k\pi)$,
$\beta\ne\alpha^2/4$. The assumption $\beta\ne\alpha^2/4$ can be removed
by the same argument as above (by considering $\tilde\beta=\beta-\eps$).
If $\beta<k\pi(\alpha-k\pi)$, then    
$\alpha^2>4\beta$ due to \eqref{ineq1alphabeta},
hence $B<0$ and Theorem~\ref{thm-Jacobi}(i) imply that $\tilde\Psi$
is not positive semidefinite. 
Consequently, the formula for $g_M=g^{\c{D}\c{D}}_{\c{N}\c{N}}$ in \eqref{g4} is true.

If $\IN_1=\{1\}$, then we can use the same arguments as in the case $\IN_1=\{2\}$
to show that the formula for $g^{\c{D}\c{N}}_{\c{N}\c{D}}$ in \eqref{g4} is true.
In particular, if $\alpha^2>4\beta$, then
$H_{\c{D},b}=\{\tilde h\in\hbox{span}(h^{(1)},h^{(2)}):\tilde h_2(1)=0\}$
is spanned by 
$h:=\cos\xi_2h^{(1)}-\cos\xi_1h^{(2)}$
and we have 
$$ 
B:=\c{B}h(1)\cdot h(1) 
=h'_1(1)h_1(1)=(\xi_1-\xi_2)\sin(\xi_1-\xi_2)\cos\xi_1\cos\xi_2,$$
hence assuming $\alpha\in[2k\pi,2(k+1)\pi]$, $k=0,1,2,\dots$,
we obtain $B>0$ or $B<0$
if and only if $\beta$ is greater or less than $(k+\frac12)\pi(\alpha-(k+\frac12)\pi)$,
respectively. 

Next consider the cases in Proposition~\ref{prop-twist}(ii), i.e.
$\bigl(\strut^{\c{N}\c{D}}_{\c{N}\c{D}}\bigr)$,
$\bigl(\strut^{\c{N}\c{N}}_{\c{N}\c{D}}\bigr)$, 
$\bigl(\strut^{\c{N}\c{D}}_{\c{N}\c{N}}\bigr)$
and $\bigl(\strut^{\c{N}\c{N}}_{\c{N}\c{N}}\bigr)$.
Since $\IN_0=\{1,2\}$, 
the initial conditions for $h^{(1)},h^{(2)}$ in Theorem~\ref{thm-Jacobi} are
$h_1'(0)=0$ and $h_2'(0)=\alpha h_1(0)$.
We will distinguish the following four subcases:
\begin{itemize}
\item[(ii-1)] $\beta=\frac12\alpha^2$,
\item[(ii-2)] $\beta=\frac14\alpha^2$,
\item[(ii-3)] $\beta>\frac14\alpha^2$ and $\beta\ne\frac12\alpha^2$,
\item[(ii-4)] $\beta<\frac14\alpha^2$.
\end{itemize}

(ii-1) Assume that $\beta=\frac12\alpha^2$.
We will show that 
$\tilde\Psi$ is positive definite
(hence $u^0$ is a strict weak minimizer) 
in case $\bigl(\strut^{\c{N}\c{D}}_{\c{N}\c{D}}\bigr)$ and
$\tilde\Psi$ is not positive semidefinite (hence
$u^0$ is not a weak minimizer) 
in case $\bigl(\strut^{\c{N}\c{N}}_{\c{N}\c{N}}\bigr)$ if $\alpha\ne2k\pi$.
In addition, in case $\bigl(\strut^{\c{N}\c{N}}_{\c{N}\c{D}}\bigr)$,
$u^0$ is or is not a weak minimizer if $\alpha\in((2k-1)\pi,2k\pi)$ or $\alpha\in(2k\pi,(2k+1)\pi)$,
respectively, and the opposite is true in case $\bigl(\strut^{\c{N}\c{D}}_{\c{N}\c{N}}\bigr)$.

Recall that $\delta=\alpha/2$. If we set
$$\begin{aligned}
h^{(1)}(x)&:=(e^{\delta x}(\cos(\delta x)-\sin(\delta x)),e^{\delta x}(\cos(\delta x)+\sin(\delta x))),\\
h^{(2)}(x)&:=(e^{-\delta x}(\cos(\delta x)+\sin(\delta x)),e^{-\delta x}(-\cos(\delta x)+\sin(\delta x))),
\end{aligned}$$
then we obtain $D\equiv-2$, hence 
$\tilde\Psi$ is positive definite 
in case $\bigl(\strut^{\c{N}\c{D}}_{\c{N}\c{D}}\bigr)$ due to Theorem~\ref{thm-Jacobi}(ii).

Considering case $\bigl(\strut^{\c{N}\c{N}}_{\c{N}\c{N}}\bigr)$, 
one can check that the matrix $\m{A}=(a_{kl})$ in Remark~\ref{rem-Jacobi}(ii)
satisfies 
$$a_{11}=4\delta e^{2\delta}\sin^2\delta,\quad a_{22}=-4\delta e^{-2\delta}\sin^2\delta,\quad
a_{12}=a_{21}=-4\delta\sin\delta\cos\delta.$$
If $\delta\ne k\pi$, then 
choosing $\xi:=(0,1)$ and $h:=\sum_{k=1}^2\xi_kh^{(k)}=h^{(2)}\in H_{\c{D},1}=H$
we obtain $\c{B}h(1)\cdot h(1)=\m{A}\xi\cdot\xi=a_{22}<0$,
i.e.\ 
$\tilde\Psi$ is not positive semidefinite due to Theorem~\ref{thm-Jacobi}(i).
Notice also that $\c{B}h(0)=0$, hence
\begin{equation} \label{PsiNNNN}
 \tilde\Psi(h)= \c{B}h\cdot h\Bigl|_0^1\ <0.
\end{equation}
If $\delta=k\pi$, then $\m{A}=0$ (degenerate case).
Already these facts contradict \cite[Proposition 5]{MR} which claims the stability for
$\beta>\frac14\alpha^2$. In fact, the authors of \cite{MR} mention
in their proof that ``We have not used any integration by parts \dots'',
but they seem to use \cite[(35)--(37)]{MR}, and \cite[(35)]{MR}
does use an integration by parts requiring the boundary conditions $h_1h_2(0)=h_1h_2(1)$.

In case $\bigl(\strut^{\c{N}\c{D}}_{\c{N}\c{N}}\bigr)$ we set
$$ h:=e^{-\delta}(\cos\delta+\sin\delta)h^{(1)}-e^\delta(\cos\delta-\sin\delta)h^{(2)}. $$
Since at least one of the numbers $h^{(1)}_1(1)$ and $h^{(2)}_1(1)$ is non-zero,
we have  $\dim H_{\c{D},1}\leq1$.
Since $h_1(1)=0$, we obtain $H_{\c{D},1}=\hbox{\rm span}(h)$, and
$$\c{B}h(1)\cdot h(1)=\c{B_2}h(1)\cdot h_2(1)=(-\alpha h_1+h_2')(1)\cdot h_2(1)=2\alpha\sin\alpha$$
due to $h_2(1)=2$ and $h_2'(1)=\alpha\sin\alpha$.
Consequently, $\c{B}h(1)\cdot h(1)>0$ if $\alpha\in(2k\pi,(2k+1)\pi)$
and $\c{B}h(1)\cdot h(1)<0$ if $\alpha\in((2k-1)\pi,2k\pi)$,
so that our assertion follows from Theorem~\ref{thm-Jacobi}(ii) and
Theorem~\ref{thm-Jacobi}(i), respectively.  

Similarly, in case $\bigl(\strut^{\c{N}\c{N}}_{\c{N}\c{D}}\bigr)$ we set
$$ h:=e^{-\delta}(\cos\delta-\sin\delta)h^{(1)}+e^\delta(\cos\delta+\sin\delta)h^{(2)}. $$
Then $h_2(1)=0$ and $H_{\c{D},1}=\hbox{\rm span}(h)$;
\begin{equation}\label{BhhNNND}
\c{B}h(1)\cdot h(1)=\c{B}_1h(1)\cdot h(1)=h_1'(1)h_1(1)=-2\alpha\sin\alpha
\end{equation} 
due to $h_1(1)=2$ and $h_1'(1)=-\alpha\sin\alpha$.
The rest of the proof is the same as in case $\bigl(\strut^{\c{N}\c{D}}_{\c{N}\c{N}}\bigr)$.
Notice also that (similarly as in the case of \eqref{PsiNNNN}), \eqref{BhhNNND} implies 
\begin{equation} \label{PsiNNND}
 \tilde\Psi(h)= \c{B}h\cdot h\Bigl|_0^1\ <0
\end{equation}
provided $\alpha\in(2k\pi,(2k+1)\pi)$.

(ii-2)
Assume that $\beta=\frac14\alpha^2$.
Set $\xi:=-\frac12\alpha$ and
$$ \begin{aligned}
h^{(1)}(x)&:=(\sin(\xi x)-\xi x\cos(\xi x),\cos(\xi x)+\xi x\sin(\xi x)), \\
h^{(2)}(x)&:=(\cos(\xi x)-\xi x\sin(\xi x),-\sin(\xi x)-\xi x\cos(\xi x)). \\
\end{aligned} $$ 
Notice that the function $D$ in Theorem~\ref{thm-Jacobi} satisfies
$D(x)=\xi^2 x^2-1$, hence $D<0$ in $[0,1]$ if $\alpha<2$,
and $D(x)=0$ for some $x\in(0,1)$ if $\alpha>2$.
This shows that 
$\frac14\alpha^2<g^{\c{N}\c{D}}_{\c{N}\c{D}}(\alpha)
\leq\min\bigl(g^{\c{N}\c{N}}_{\c{N}\c{D}}(\alpha),g^{\c{N}\c{D}}_{\c{N}\c{N}}(\alpha),g_N(\alpha)\bigr)$
if $\alpha>2$,
i.e.~$u^0$ cannot be a weak minimizer in any case.

Let $\alpha<2$. Then $u^0$ is a strict weak minimizer in case $\bigl(\strut^{\c{N}\c{D}}_{\c{N}\c{D}}\bigr)$.
Next consider case $\bigl(\strut^{\c{N}\c{N}}_{\c{N}\c{N}}\bigr)$.
If $\beta=\alpha^2/2$, then \eqref{PsiNNNN} implies that $\tilde\Psi$
is not positive semidefinite. The monotonicity of $\tilde\Psi$ with respect to $\beta$
shows that $\tilde\Psi$ cannot be positive semidefinite if $\beta=\alpha^2/4$ either,
hence $u_0$ is not a weak minimizer.
The same arguments show that $u_0$ is not a weak minimizer 
in case $\bigl(\strut^{\c{N}\c{N}}_{\c{N}\c{D}}\bigr)$, see~\eqref{PsiNNND}.
It remains to consider case 
$\bigl(\strut^{\c{N}\c{D}}_{\c{N}\c{N}}\bigr)$.
Set
$$ h:=(\cos\xi-\xi\sin\xi)h^{(1)}-(\sin\xi-\xi\cos\xi)h^{(2)}, $$
so that $h_1(1)=0$.
Then the restriction $\alpha<2$ implies $h_2(1)=1-\xi^2>0$.
Since $h_2'(1)=-\xi^2+\xi\sin(2\xi)$, we see that $h_2'(1)h_2(1)>0$ only if $\alpha<\alpha_0$,
where $\alpha_0$ is defined by $\alpha_0=2\sin\alpha_0$ ($\alpha_0\approx0.6\pi$). 

(ii-3) 
Assume $\beta>\frac14\alpha^2$, $\beta\ne\frac12\alpha^2$, and set
$$
\vp(x):=e^{\gamma x}(\gamma^2-\delta^2),\ \ \psi_\pm(x):=e^{-\gamma x}(\gamma\pm\delta)^2. 
$$
Then we can take
$$ \begin{aligned}
h^{(1)}(x)&:=[(\vp(x)+\psi_+(x))(\cos(\delta x)+\sin(\delta x)),
             (\vp(x)+\psi_+(x))(-\cos(\delta x)+\sin(\delta x))], \\
h^{(2)}(x)&:=[(\vp(x)+\psi_-(x))(\cos(\delta x)-\sin(\delta x)),
             (\vp(x)+\psi_-(x))(\cos(\delta x)+\sin(\delta x))], 
\end{aligned}
$$
and an easy computation yields
\begin{equation} \label{Dplus}
D(x) = 4(\gamma^2-\delta^2)\Bigl((\gamma^2-\delta^2)\cosh(2\gamma x)+\gamma^2+\delta^2\Bigr).
\end{equation}
The function $D$ does not vanish in $(0,1]$ if and only if $\gamma>\delta$ (i.e.~$\beta>\frac12\alpha^2$), 
or $\gamma<\delta$ and $\cosh(2\gamma)<\frac{\gamma^2+\delta^2}{\delta^2-\gamma^2}$.
The last inequality 
can be written in the form
\begin{equation} \label{Dpos}
(\alpha^2-2\beta)\cosh(2\gamma)<2\beta.
\end{equation}
In case $\bigl(\strut^{\c{N}\c{N}}_{\c{N}\c{N}}\bigr)$, one has to consider
the numbers $a_{kl}$ in Remark~\ref{rem-Jacobi}(ii):
$$ \begin{aligned}
a_{11}&=2\gamma(\vp^2-\psi_+^2)(1)+2\delta(\vp+\psi_+)^2(1)\cos(2\delta), \\
a_{22}&=2\gamma(\vp^2-\psi_-^2)(1)-2\delta(\vp+\psi_-)^2(1)\cos(2\delta), \\
a_{12}=a_{21}&=-2\delta(\vp+\psi_+)(\vp+\psi_-)(1)\sin(2\delta).
\end{aligned} $$
If $\gamma>\delta$ (i.e.~$\beta>\frac12\alpha^2$), then
$$ a_{11}(\gamma+\delta)^{-2}+a_{22}(\gamma-\delta)^{-2}=8(\gamma^2+\delta^2)(\gamma-\theta\delta\cos(2\delta))\sinh(2\gamma)>0,$$
hence the matrix $\m{A}$ is positive definite if and only if $a_{11}a_{22}>a_{12}^2$, which is equivalent to
\begin{equation} \label{Apos}
(1-\theta^2)\cosh(2\gamma)+\theta^2\cos(2\delta)>1. 
\end{equation}
We used the assumption $\beta>\frac12\alpha^2$ in order to derive \eqref{Apos},
but this is not restrictive, since we know that $u^0$ can only be a weak minimizer 
of our problem in case $\bigl(\strut^{\c{N}\c{N}}_{\c{N}\c{N}}\bigr)$ when $\beta>\frac12\alpha^2$.
Hence in this case the condition \eqref{Apos} determines the domain of stability.

In cases
$\bigl(\strut^{\c{N}\c{N}}_{\c{N}\c{D}}\bigr)$ and $\bigl(\strut^{\c{N}\c{D}}_{\c{N}\c{N}}\bigr)$,
we set
$$ h:=(\vp(1)+\psi_-(1))(\cos\delta+\sin\delta)h^{(1)}
    +(\vp(1)+\psi_+(1))(\cos\delta-\sin\delta)h^{(2)} $$
and
$$ h:=(\vp(1)+\psi_-(1))(\cos\delta-\sin\delta)h^{(1)}
    -(\vp(1)+\psi_+(1))(\cos\delta+\sin\delta)h^{(2)}, $$
respectively.
Then $h_2(1)=0$, $h_1(1)=D(1)$,
$$ \c{B}h(1)\cdot h(1) = h_1'h_1(1)
=4\gamma(\gamma^2-\delta^2)D(1)\bigl((\gamma^2-\delta^2)\sinh(2\gamma)-2\gamma\delta\sin(2\delta)\bigr), $$
and $h_1(1)=0$, $h_2(1)=-D(1)$,
$$ \c{B}h(1)\cdot h(1) = h_2'h_2(1)
=4\gamma(\gamma^2-\delta^2)D(1)\bigl((\gamma^2-\delta^2)\sinh(2\gamma)+2\gamma\delta\sin(2\delta)\bigr), $$
respectively, where $D$ is as in \eqref{Dplus}.
Consequently, assuming that $D$ does not vanish in $[0,1]$ (i.e.~\eqref{Dpos} is true),
the stability conditions are
\begin{equation} \label{NNNDplus}
 (\gamma^2-\delta^2)\sinh(2\gamma)-2\gamma\delta\sin(2\delta)>0 
\end{equation}
and
\begin{equation} \label{NDNNplus} 
 (\gamma^2-\delta^2)\sinh(2\gamma)+2\gamma\delta\sin(2\delta)>0, 
\end{equation}
respectively.
Notice that if $\beta=\frac12\alpha^2$ (hence $\gamma=\delta$), 
then \eqref{NNNDplus} and \eqref{NDNNplus}
are equivalent to the corresponding stability conditions in case (ii-1).

(ii-4)
If $\beta<\frac14\alpha^2$, then we can set
$$\begin{aligned}
h^{(1)}(x)&:=(\xi_2\sin(\xi_1x)-\xi_1\sin(\xi_2x),\xi_2\cos(\xi_1x)-\xi_1\cos(\xi_2x)), \\
h^{(2)}(x)&:=(\xi_1\cos(\xi_1x)-\xi_2\cos(\xi_2x),-\xi_1\sin(\xi_1x)+\xi_2\sin(\xi_2x)),
\end{aligned}$$
where $\xi_{1,2}=-\frac12\alpha\pm\gamma$, 
and we obtain
\begin{equation} \label{Dminus}
 D(x)=-2\beta+(\alpha^2-2\beta)\cos(2\gamma x).
\end{equation}
If $\alpha^2-4\beta\geq\pi^2$, then $D$ changes sign in $[0,1]$. 
Hence the condition $D>0$ in $[0,1]$ is equivalent to
\begin{equation} \label{DDpos}
\alpha^2-4\beta<\pi^2 \quad\hbox{and}\quad
(\alpha^2-2\beta)\cos(2\gamma)>2\beta.
\end{equation} 
It is not difficult to check (cf.~case (ii-2))
that if $\alpha<2$ or $\alpha>2$, then 
\eqref{DDpos}
or \eqref{Dpos}, respectively, is the (essentially optimal) sufficient condition for the stability in our problem 
in case  $\bigl(\strut^{\c{N}\c{D}}_{\c{N}\c{D}}\bigr)$.
If $\alpha=2$, then that sufficient condition is $\beta>1$.

Case (ii-2) shows that it remains to
consider only case $\bigl(\strut^{\c{N}\c{D}}_{\c{N}\c{N}}\bigr)$ and $\alpha<\alpha_0$.
Take
$$ h:=(\xi_1\cos\xi_1-\xi_2\cos\xi_2)h^{(1)}-(\xi_2\sin\xi_1-\xi_1\sin\xi_2)h^{(2)}. $$
Then $h_1(1)=0$, $h_2(1)=-D(1)$ (where $D$ is as in \eqref{Dminus}), and
$$ h_2'(1) = (\xi_1^2\sin\xi_2\cos\xi_1-\xi_2^2\sin\xi_1\cos\xi_2)(\xi_2-\xi_1).$$
Assuming $D>0$ in $[0,1]$ (i.e.~\eqref{DDpos}), the condition $h_2'h_2(1)>0$
is equivalent to
\begin{equation} \label{NDNNplus2}
  \xi_1^2\sin\xi_2\cos\xi_1>\xi_2^2\sin\xi_1\cos\xi_2.
\end{equation}
Since $\xi_1=0$ if $\beta=0$, \eqref{NDNNplus2} can only be true if $\beta>0$.
It is not difficult to see that 
$g^{\c{N}\c{D}}_{\c{N}\c{N}}(\alpha)=0$ for $\alpha\leq\frac12\pi$
and $g^{\c{N}\c{D}}_{\c{N}\c{N}}(\alpha_0)=\frac14\alpha^2_0$.
If $\alpha>\alpha_0$, then \eqref{NDNNplus} determines $g^{\c{N}\c{D}}_{\c{N}\c{N}}(\alpha)$.

The formulas for functions $g$ in Proposition~\ref{prop-twist}(ii) follow from the stability conditions
\eqref{Dpos},\eqref{Apos},\allowbreak\eqref{NNNDplus},\eqref{NDNNplus},\eqref{DDpos},\eqref{NDNNplus2}.
\end{proof}

\begin{remark} \label{rem-iv} \rm
Consider case $\bigl(\strut^{\c{D}\c{D}}_{\c{N}\c{N}}\bigr)$.
We have $g^{\c{D}\c{D}}_{\c{N}\c{N}}(\alpha)=g_M(\alpha)
>g^{\c{D}\c{D}}_{\c{N}\c{D}}(\alpha)$ except for $\alpha=\alpha_k:=(2k-1)\pi$, $k=1,2,\dots$.
If $\alpha=\alpha_k$ and $\beta=g_M(\alpha)=g^{\c{D}\c{D}}_{\c{N}\c{D}}(\alpha)$, 
then the function $D$ in Theorem~\ref{thm-Jacobi} satisfies 
$D\ne 0$ in $(0,1)$, $D(1)=0$, 
hence condition \eqref{cond-not} cannot be satisfied
(otherwise \eqref{ih0} would imply $\tilde\Psi(\bar h)<0$ for some $\bar h\in\tilde W_\c{D}$,
so that $\tilde\Psi(\bar h)<0$ also if $\beta$ is slightly greater than
$g_M(\alpha)$, which is a contradiction).
For example, if $k=2$ (i.e.~$\alpha=3\pi$, $\beta=2\pi^2$), then our proof shows that
$H_0$ is spanned by $h(x):=(-\sin(\pi x)-\sin(2\pi x),\cos(\pi x)+\cos(2\pi x))$
and $\c{B}_2h(1)=h_2(1)=h_1(1)=0$ which violates \eqref{cond-not}.
This degeneracy seems to be also responsible for the non-smooth behavior of
$g_M$ at $\alpha=\alpha_k$.
\qed
\end{remark}

%-----------------------------
\section{Field of extremals}
\label{sec-field}

In this section we modify the Weierstrass theory
to provide necessary and sufficient conditions for weak, strong and global minimizers.
Recall that $B_\eps:=\{\xi\in\R^N:|\xi|<\eps\}$.

\begin{definition} \label{def-field} \rm
Let $f\in C^2$, $\tilde\eps>0$, and let $u^0\in C^2$ be an extremal.
The image $\c{P}$ of
a $C^1$-diffeomorphism $P:[a,b]\times B_{\tilde\eps}\to [a,b]\times\R^N:(x,\alpha)\mapsto(x,\vp(x,\alpha))$
is called a {\it field of extremals for $u^0$} if 
$\vp_x\in C^1$, $\vp(\cdot,\alpha)$
is an extremal for each $\alpha$, and $\vp(\cdot,0)=u^0$.
The {\it slope} of the field of extremals $\c{P}$ is defined as
$\psi:\c{P}\to\R^N:(x,v)\mapsto \vp_x(x,\alpha(x,v))$,
where $\alpha(x,v)$ is defined by 
$\vp(x,\alpha(x,v))=v$. 
\end{definition}

It is known that in the case of the Dirichlet boundary conditions,
the existence of a field of extremals 
$\vp(x,\alpha)$
satisfying the self-adjointness condition \eqref{self-adjoint},
and the nonnegativity of the excess function
$$ E(x,u,p,q):=f(x,u,q)-f(x,u,p)-(q-p)\cdot f_p(x,u,p) $$
for suitable $(x,u,p,q)$ imply that $u^0$ is a strong minimizer.
In addition, the existence of the field is guaranteed
by the sufficient condition for the weak minimizer in Theorem~\ref{thm-Jacobi}(ii).
In the general case we have the following analogue
(see Theorem~\ref{thm-field1} for a simpler version in the scalar case $N=1$):

%---------------------------------
\begin{theorem} \label{thm-field}
Let $f\in C^2$, $\eps>0$, and let $u^0\in C^2$ be an extremal satisfying \eqref{NBC}.

\strut\hbox{\rm(i)}
Let there exist a field of extremals $\c{P}$ 
for $u^0$ satisfying the conditions
\begin{equation} \label{self-adjoint}
 \frac{\partial f_{p_i}(a,v,\psi(a,v))}{\partial v_j}
=\frac{\partial f_{p_j}(a,v,\psi(a,v))}{\partial v_i} \quad\hbox{whenever }\ i,j\in I, \ v-u^0(a)\in B_\eps, 
\end{equation}
\begin{equation} \label{field-a}
f_p(a,v,\psi(a,v))\cdot(v-u^0(a))\leq0,\quad\quad\hbox{whenever }\ v-u^0(a)\in\R^N_{\c{D},a}\cap B_\eps, 
\end{equation}
\begin{equation} \label{field-b}
f_p(b,v,\psi(b,v))\cdot(v-u^0(b))\geq0,\quad\quad\hbox{whenever }\ v-u^0(b)\in\R^N_{\c{D},b}\cap B_\eps, 
\end{equation}
where $\psi$ denotes the slope of the field.
Assume also
\begin{equation} \label{Epos}
 E(x,v,\psi(x,v),q)\geq0 \quad\hbox{for all }\ ((x,v),q)\in\c{P}\times\R^N.
\end{equation}
Then $u^0$ is a strong minimizer.

If \eqref{Epos} is only true for all $(x,v)\in\c{P}$ and $q=q(x,v)$ satisfying 
$|q-\psi(x,v)|\leq\eta$ for some $\eta>0$, then $u^0$ is a weak minimizer.

If the field is global (i.e.~$\c{P}=[a,b]\times\R^N$) and
\eqref{self-adjoint}, \eqref{field-a}, \eqref{field-b} are true with $B_\eps$ replaced by $\R^N$, 
then $u^0$ is a global minimizer. 

\strut\hbox{\rm(ii)}
Assume $\ID_a=\emptyset$ and let
there exist a field of extremals satisfying  \eqref{self-adjoint}. 
If the reversed inequality ``$\geq$'' is true in \eqref{field-a}, 
and the reversed strict inequality ``$<$'' is true in \eqref{field-b} for 
$v=u^0(b)+tw^0$, where $t\in(0,1)$ and $w^0\in\R^N_{\c{D},b}$ is fixed,
then $u_0$ is not a weak minimizer.

\strut\hbox{\rm(iii)}
Assume \eqref{ass1} 
and let the sufficient conditions for a weak minimizer
in Theorem~\ref{thm-Jacobi}(ii) be satisfied.
If $\ID_a=\emptyset$ or $\IN_a=\emptyset$ or
\begin{equation} \label{special}
\left.\begin{aligned}
&\hbox{$f_{p_i}(a,u,p)$ for $i\in\ID_a$ does not depend on $u_j,p_j$ with $j\notin\ID_a$,}\\ 
&\hbox{$f_{p_iu_j}=f_{p_ju_i}$ for $i,j\in\ID_a$,} 
\end{aligned} \quad\right\}
\end{equation}
then a field of extremals satisfying \eqref{self-adjoint},\eqref{field-a},\eqref{field-b}
exists.
\end{theorem}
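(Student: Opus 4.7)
The strategy is classical Weierstrass field theory adapted to the mixed endpoint setting. Introduce the Hilbert $1$-form on $\c{P}$,
\[
\omega := (f-\psi\cdot f_p)\,dx+\sum_i f_{p_i}\,dv_i,
\]
with $f,f_p$ evaluated at $(x,v,\psi(x,v))$, and the Hilbert integral $I^*(u):=\int_\gamma\omega$ along the graph $\gamma$ of $u$, which works out to $\int_a^b[f(x,u,\psi(x,u))+f_p(x,u,\psi(x,u))\cdot(u'-\psi(x,u))]\,dx$. Since $(u^0)'=\psi(\cdot,u^0)$ one has $\Phi(u^0)=I^*(u^0)$, and in general
\[
\Phi(u)-\Phi(u^0)=\int_a^b E(x,u,\psi(x,u),u'(x))\,dx+\bigl(I^*(u)-I^*(u^0)\bigr).
\]
For part~(i) it therefore suffices to prove (a) closedness $d\omega=0$ on $\c{P}$, so $I^*$ depends only on the endpoints of $\gamma$; (b) a correct sign on the resulting endpoint contribution; (c) nonnegativity of the excess integrand. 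Item (c) is exactly \eqref{Epos}, used in three flavours: for the strong version the whole range $q\in\R^N$ is needed because only $\|u-u^0\|_C<\eps$ is controlled; for the weak version, $\|u-u^0\|_{C^1}<\eps$ together with continuity of $\psi$ forces $|u'(x)-\psi(x,u(x))|<\eta$ uniformly; the global version runs identically on $[a,b]\times\R^N$.

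The main technical point is (a). The $dv_j\wedge dv_i$-part of $d\omega$ is exactly the self-adjointness \eqref{self-adjoint}, hypothesized only at $x=a$; I propagate it to all $x$ by pulling back to $\alpha$-coordinates. Setting $L(x,\alpha):=f(x,\vp(x,\alpha),\vp_x(x,\alpha))$, the Euler--Lagrange equations along the field give $L_{\alpha_j}=\frac{d}{dx}(f_p\cdot\vp_{\alpha_j})$, so Clairaut's symmetry $L_{\alpha_j\alpha_k}=L_{\alpha_k\alpha_j}$ makes the skew quantity $Q_{jk}:=\partial_{\alpha_k}(f_p\cdot\vp_{\alpha_j})-\partial_{\alpha_j}(f_p\cdot\vp_{\alpha_k})$ constant in $x$; since $Q_{jk}$ is the $\alpha$-expression of the self-adjointness defect and vanishes at $x=a$ by \eqref{self-adjoint}, it vanishes everywhere. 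A direct calculation then shows that the $dx\wedge dv_i$-part of $d\omega$ vanishes too, using once more the Euler--Lagrange equations along field trajectories. Closedness of $\omega$ and a vertical homotopy at each endpoint yield
\[
I^*(u)-I^*(u^0)=\int_0^1\bigl[f_p(b,u^0(b)+t\Delta_b,\psi)\cdot\Delta_b-f_p(a,u^0(a)+t\Delta_a,\psi)\cdot\Delta_a\bigr]\,dt,
\]
with $\Delta_x:=u(x)-u^0(x)\in\R^N_{\c{D},x}$; inserting $v=u^0(x)+t\Delta_x$ into \eqref{field-a}, \eqref{field-b} and dividing by $t>0$ gives $f_p(b,\ldots)\cdot\Delta_b\geq 0$ and $f_p(a,\ldots)\cdot\Delta_a\leq 0$, so the bracket is $\geq 0$ and $I^*(u)\geq I^*(u^0)$, completing part~(i).

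For part~(ii) I use the field itself to build an admissible variation of $u^0$ with $\Phi<\Phi(u^0)$. By the field property $\vp_\alpha(b,0)$ is invertible, so the implicit function theorem provides $\alpha(s)$ with $\vp(b,\alpha(s))=u^0(b)+sw^0$; set $u_s:=\vp(\cdot,\alpha(s))$. The assumption $\ID_a=\emptyset$ makes $u_s\in\c{M}$, and $u_s\to u^0$ in $C^1$ as $s\to 0$. Along the extremal $u_s$ the excess function vanishes identically, hence $\Phi(u_s)-\Phi(u^0)=I^*(u_s)-I^*(u^0)$; the reversed ``$\geq$'' in \eqref{field-a} contributes $\leq 0$ at $a$ while the strictly reversed ``$<$'' in \eqref{field-b} on $v=u^0(b)+tw^0$ contributes $<0$ at $b$, so $\Phi(u_s)<\Phi(u^0)$, contradicting weak minimality.

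For part~(iii) I lift the linear Jacobi data of Theorem~\ref{thm-Jacobi}(ii) to a nonlinear extremal family that forms a field satisfying the three conditions of~(i). When $\ID_a=\emptyset$ I parametrize $\vp(a,\alpha):=u^0(a)+\alpha$ and determine $\vp_x(a,\alpha)$ from the natural boundary conditions $f_p(a,\vp(a,\alpha),\vp_x(a,\alpha))=0$ by the implicit function theorem (the Jacobian is $f_{pp}^0(a)$, invertible by \eqref{f-conv}); then $\vp_\alpha(a,0)=\mathrm{Id}$ gives a diffeomorphism near $x=a$, extended to $[a,b]\times B_\eps$ by the nonvanishing $D\neq 0$ on $(a,b]$. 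In this setup \eqref{self-adjoint} holds automatically because $f_p(a,v,\psi(a,v))\equiv 0$, \eqref{field-a} holds with equality, and \eqref{field-b} follows from $\c{B}h(b)\cdot h(b)>0$ on $H_{\c{D},b}\setminus\{0\}$ via quadratic expansion near $u^0(b)$. When $\IN_a=\emptyset$ I use the classical central-field construction from \cite{GH}, shifting the focal point slightly outside $[a,b]$ to obtain a genuine diffeomorphism; here \eqref{field-a} is trivial since $\Delta_a=0$ for admissible $u$. Hypothesis~\eqref{special} is tailored precisely so that \eqref{self-adjoint} can still be arranged at $x=a$ in the mixed case by a concrete choice of parametrization respecting the Dirichlet constraints on $\ID_a$, after which the previous scheme applies. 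The main obstacle throughout~(iii) is this simultaneous arrangement of self-adjointness with boundary compatibility, which is the very reason the theorem splits into the three listed sub-cases rather than being stated uniformly.
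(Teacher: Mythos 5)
Your parts (i) and (ii), and the first two cases of (iii), follow the paper's own route: the Hilbert invariant integral with its closedness obtained by propagating \eqref{self-adjoint} from $x=a$ along the extremals (the paper's Proposition~\ref{prop-HII} carries out exactly your Clairaut/Euler computation for the skew defect), the endpoint sign analysis via vertical homotopies and $S_v=f_p(\cdot,\cdot,\psi)$, the three readings of the excess hypothesis for strong/weak/global, the competitor family $\vp(\cdot,\alpha(s))$ through $u^0(b)+sw^0$ for (ii), the natural-boundary-condition parametrization at $x=a$ when $\ID_a=\emptyset$ (with \eqref{self-adjoint} trivial because $f_p(a,v,\psi(a,v))\equiv0$), and the classical construction with the base point pushed to $a-\eps$ when $\IN_a=\emptyset$. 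All of this is correct and matches the paper, modulo the usual sketching of the quadratic expansion that turns $\c{B}h(b)\cdot h(b)>0$ on $H_{\c{D},b}\setminus\{0\}$ into \eqref{field-b}.

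The genuine gap is the third case of (iii), under hypothesis \eqref{special}. You assert that \eqref{special} is ``tailored precisely so that \eqref{self-adjoint} can still be arranged \dots by a concrete choice of parametrization,'' but you never produce that parametrization, and producing it is the only nontrivial content of this case. The paper's construction prescribes Jacobi initial data $h^{(k)}_i(a)=\eta\delta_{ik}$ for $k\in\ID_a$, $h^{(k)}_i(a)=\delta_{ik}$ for $k\in\IN_a$, $(h^{(k)}_i)'(a)=\delta_{ik}$ for $i\in\ID_a$, $\c{B}_ih^{(k)}(a)=0$ for $i\in\IN_a$, with a small parameter $\eta>0$ chosen by perturbation off the degenerate value $\eta=0$ so that $D>0$ on all of $[a,b]$; the nonlinear family then takes $v_i(\alpha)=u^0_i(a)+\eta\alpha_i$ on $\ID_a$, $v_i(\alpha)=u^0_i(a)+\alpha_i$ on $\IN_a$, $w_i(\alpha)=(u^0_i)'(a)+\alpha_i$ on $\ID_a$, and solves $f_{p_i}(a,v(\alpha),w(\alpha))=0$ only for $i\in\IN_a$. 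Verifying \eqref{self-adjoint} then genuinely uses both lines of \eqref{special}: for $i,j\in\IN_a$ it is trivial from $f_{p_i}=f_{p_j}=0$; for $i\in\ID_a$, $j\in\IN_a$ one needs $f_{p_iu_j}=f_{p_ip_j}=0$ from the first line; and for $i,j\in\ID_a$ one needs $f_{p_iu_j}=f_{p_ju_i}$ together with $\psi_{k,v_j}(a,v)=\frac1\eta\delta_{kj}$ for $k\in\ID_a$, which is exactly what the $\eta$-scaling of the parametrization delivers. Without this construction your case \eqref{special} remains an assertion, not a proof.
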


\begin{remark} \label{rem-field3} \rm
The well known Weierstrass necessary condition for minimizers asserts that
the inequality $E(x,u^0(x),(u^0)'(x),q)\geq0$
for all $q\in\R^N$ or $q=q(x)$ satisfying $|q-(u^0)'(x)|\leq\eta$
is necessary for $u^0$ to be a strong or weak minimizer, respectively,
hence the nonnegativity conditions on $E$ in Theorem~\ref{thm-field}
are not far from optimal.
Similarly, Theorem~\ref{thm-field}(ii) shows that the sufficient conditions 
\eqref{field-a}--\eqref{field-b}
in Theorem~\ref{thm-field}(i) are also necessary in some sense,
at least if $\ID_a=\emptyset$.
\qed
\end{remark}

The proof of part (iii) of Theorem~\ref{thm-field} is quite technical
and, in addition, we will not need that part
in our examples (since we will prove the existence of the field required by
Theorem~\ref{thm-field}(i)--(ii) by other arguments).
Therefore the proof of part (iii) is postponed to the Appendix.

%---------------------------------
In what follows we assume that 
\begin{equation} \label{ass3}
\begin{aligned}
&\hbox{$f\in C^2$, $u^0\in C^2$ is an extremal,}\\
&\hbox{$\c{P}$ is a field of extremals for $u^0$ with slope $\psi$,
and \eqref{self-adjoint} is true.}
\end{aligned}
\end{equation}
Given $v\in C^1([a,b],\R^N)$ such that $\hbox{graph}(v):=\{(x,v(x)):x\in[a,b]\}\subset\c{P}$,
we define the Hilbert invariant integral 
$$ I(v):=\int_a^b\bigl[f\bigl(x,v(x),\psi\bigl(x,v(x)\bigr)\bigr)+
   \bigl(v'(x)-\psi\bigl(x,v(x)\bigr)\bigr)\cdot f_p\bigl(x,v(x),\psi\bigl(x,v(x)\bigr)\bigr)\bigr]\,dx.$$
The following proposition is well known, but for the reader's convenience
we provide its proof in the Appendix.

\begin{proposition} \label{prop-HII}
Assume \eqref{ass3}.
Then there exists $S\in C^2(\c{P})$ 
such that 
\begin{equation} \label{HII}
\begin{aligned}
&I(v)=S(b,v(b))-S(a,v(a))\quad\hbox{for any }\ v\in C^1([a,b],\R^N)\ \hbox{ with }\ \hbox{\rm graph}(v)\subset\c{P},\\ 
&S_v(x,v)=f_p(x,v,\psi(x,v))\quad\hbox{for any }\ (x,v)\in\c{P}.
\end{aligned}
\end{equation}
\end{proposition}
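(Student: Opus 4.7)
The plan is to construct $S$ on $\c{P}$ as a potential for the 1-form obtained from the Hilbert integrand, following the classical Mayer-field construction. On $\c{P}$, set formally $\omega := (f-\psi\cdot f_p)\,dx + f_p\cdot dv$, with $f,f_p$ evaluated at $(x,v,\psi(x,v))$. Rather than work directly on $\c{P}$, I would pull back through the diffeomorphism $P$ and work with $\tilde\omega := \omega_0(x,\alpha)\,dx + \sum_i\omega_i(x,\alpha)\,d\alpha_i$ on the convex domain $[a,b]\times B_{\tilde\eps}$, where $\omega_0 := f(x,\vp,\vp_x)$ and $\omega_i := f_p(x,\vp,\vp_x)\cdot\vp_{\alpha_i}$. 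Simple connectedness of $[a,b]\times B_{\tilde\eps}$ lets me integrate a closed form to obtain a $C^2$ primitive $\tilde S$, and then $S(x,v) := \tilde S(x,\alpha(x,v))$ with $\alpha(x,v)$ from $P^{-1}$.

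The core task is to verify that $\tilde\omega$ is closed. For the mixed identity $\partial_{\alpha_i}\omega_0 = \partial_x\omega_i$, direct differentiation gives
\begin{equation*}
\partial_{\alpha_i}\omega_0 - \partial_x\omega_i = \bigl(f_{v_k}(x,\vp,\vp_x) - \tfrac{d}{dx}f_{p_k}(x,\vp,\vp_x)\bigr)\vp_{k,\alpha_i}(x,\alpha) = 0
\end{equation*}
since $\vp(\cdot,\alpha)$ is an extremal and the $\vp_{k,x\alpha_i}$ terms cancel. The harder condition is the antisymmetric $\alpha$-$\alpha$ closedness $\Omega_{ij}:=\partial_{\alpha_j}\omega_i-\partial_{\alpha_i}\omega_j\equiv0$; hypothesis \eqref{self-adjoint} only controls the slice $x=a$. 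The decisive observation is that, using the already-proved mixed identity and Clairaut's theorem applied to the scalar $\omega_0$,
\begin{equation*}
\partial_x\Omega_{ij} = \partial_{\alpha_j}\partial_x\omega_i - \partial_{\alpha_i}\partial_x\omega_j = \partial_{\alpha_j}\partial_{\alpha_i}\omega_0 - \partial_{\alpha_i}\partial_{\alpha_j}\omega_0 = 0,
\end{equation*}
so $\Omega_{ij}$ is independent of $x$. It therefore suffices to check $\Omega_{ij}(a,\alpha)=0$. A chain-rule computation at $x=a$ (where $\vp_x(a,\alpha)=\psi(a,\vp(a,\alpha))$) gives
\begin{equation*}
\Omega_{ij}(a,\alpha) = \bigl[G_{k,v_l}(a,v) - G_{l,v_k}(a,v)\bigr]\vp_{l,\alpha_j}(a,\alpha)\vp_{k,\alpha_i}(a,\alpha),
\end{equation*}
with $G_k(x,v):=f_{p_k}(x,v,\psi(x,v))$ and $v=\vp(a,\alpha)$; the bracket vanishes by \eqref{self-adjoint}.

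Having a $C^2$ primitive $\tilde S$ of $\tilde\omega$, I would prove the Hilbert identity as follows. Given $v\in C^1$ with $\hbox{graph}(v)\subset\c{P}$, set $\beta(x):=\alpha(x,v(x))$; then $v(x)=\vp(x,\beta(x))$, so differentiating yields $v'(x)=\psi(x,v(x))+\vp_\alpha(x,\beta(x))\beta'(x)$. Integrating $\tilde\omega$ along the lift $x\mapsto(x,\beta(x))$ gives
\begin{equation*}
\int_a^b\bigl[f(x,v,\psi) + f_p(x,v,\psi)\cdot(v'-\psi)\bigr]dx = I(v),
\end{equation*}
and the fundamental theorem of calculus applied to $\tilde S$ identifies this with $\tilde S(b,\beta(b))-\tilde S(a,\beta(a)) = S(b,v(b))-S(a,v(a))$. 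Finally, the identity $S_{v_i}=f_{p_i}(x,v,\psi)$ follows from the chain rule $S_{v_i} = \tilde S_{\alpha_j}\alpha_{j,v_i} = f_{p_k}(x,v,\psi)\vp_{k,\alpha_j}\alpha_{j,v_i} = f_{p_i}(x,v,\psi)$, where the last step uses $\vp_{k,\alpha_j}\alpha_{j,v_i}=\delta_{ki}$ from $P\circ P^{-1}=\mathrm{id}$.

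The principal obstacle is the propagation of the Mayer (self-adjointness) condition from $x=a$ to all of $\c{P}$. The whole argument hinges on the identity $\partial_x\Omega_{ij}=0$, which lets the boundary hypothesis \eqref{self-adjoint} propagate globally; without it one would have to impose self-adjointness on every slice, an assumption not available in the variable-endpoint setting.
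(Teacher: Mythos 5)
Your proof is correct and takes essentially the same route as the paper: both construct $S$ as a primitive of the closed Hilbert form, using the Euler equations to obtain the mixed exactness condition and propagating the self-adjointness hypothesis \eqref{self-adjoint} from the slice $x=a$ to all of $\c{P}$ by showing that the Lagrange brackets $\Omega_{ij}=\partial_{\alpha_j}\omega_i-\partial_{\alpha_i}\omega_j$ are independent of $x$; your computation in the $(x,\alpha)$-chart is precisely the pullback under $P$ of the paper's verification of $V_v=W_x$ and $W_{i,v_j}=W_{j,v_i}$ in the $(x,v)$-chart. The only caveat --- one the paper itself flags --- is that the formal interchange of second derivatives in $\partial_x\Omega_{ij}=\partial_{\alpha_j}\partial_{\alpha_i}\omega_0-\partial_{\alpha_i}\partial_{\alpha_j}\omega_0=0$ presupposes $f,\vp\in C^3$, whereas under the regularity actually assumed in \eqref{ass3} one must argue as in the proof of \cite[Proposition 6.1.1.4]{GH}.
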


%-----------------------------------
\begin{proof}[Proof of Theorem~\ref{thm-field}]
(i) Let $u-u^0\in C^1_{\c{D}}$, $\hbox{graph}(u)\subset\c{P}$,
and let $S$ be the function from Proposition~\ref{prop-HII}.
If $u$ is close to $u^0$ in the sup-norm, then the assumptions
\eqref{field-a}--\eqref{field-b} guarantee
$$S(a,u(a))-S(a,u^0(a))=\int_0^1S_v(a,u^0(a)+t(u(a)-u^0(a)))\cdot(u(a)-u^0(a))\,dt\leq0,$$ 
and similarly $S(b,u(b))-S(b,u^0(b))\geq0$,
hence $I(u^0)\leq I(u)$ due to Proposition~\ref{prop-HII}.
This fact and assumption \eqref{Epos} imply
$$ \Phi(u)-\Phi(u^0)=\Phi(u)-I(u^0)\geq\Phi(u)-I(u)
  =\int_a^b E(x,u(x),\psi(x,u(x)),u'(x))\,dx\geq0,$$ 
hence $u^0$ is a strong minimizer.
The remaining assertions in (i) are obvious.

(ii) Choose $t_k\to0+$ and let $\alpha_k$ be such that $\vp(b,\alpha_k)=u^0(b)+t_kw^0$.
Then $u^k:=\vp(\cdot,\alpha_k)\to u^0$ in $C^1$, 
$u^k-u^0\in C^1_{\c{D}}$ due to $\ID_a=\emptyset$ and $w^0\in\R^N_{\c{D},b}$, 
and, similarly as in (i), we obtain
$$\Phi(u^k)=I(u^k)=S(b,u^k(b))-S(a,u^k(a))<
S(b,u^0(b))-S(a,u^0(a))=I(u^0)=\Phi(u^0),$$
hence $u^0$ is not a minimizer.
\end{proof}

%--------------------------------
\section{Scalar examples with variable endpoints}
\label{sec-se}

Throughout this section (except for Remark~\ref{rem-Jin}) we assume $N=1$ and $\ID_a=\ID_b=\emptyset$.
Since we will often use Theorem~\ref{thm-field}, let us first reformulate it in this special case.
Notice that the extremals in the field of extremals satisfy $\vp_\alpha(x,\alpha)\ne0$,
hence we can assume $\vp_\alpha>0$ without loss of generality.

\begin{theorem} \label{thm-field1}
Let $N=1$, $\ID_a=\ID_b=\emptyset$,
$f\in C^2$ and let $u^0\in C^2$ be an extremal satisfying \eqref{NBC}.

\strut\hbox{\rm(i)}
Let there exist a field of extremals $\c{P}=\{(x,\vp(x,\alpha)):x\in[a,b],\ \alpha\in(-\eps,\eps)\}$ 
for $u^0$ satisfying the conditions $\vp_\alpha>0$ and
\begin{equation} \label{field1}
 f^\alpha_p(a)\alpha\leq0\leq f^\alpha_p(b)\alpha,\qquad\alpha\in(-\eps,\eps),
\end{equation}
where $f_p^\alpha(x):=f_p(x,\vp(x,\alpha),\vp_x(x,\alpha))$.
Assume also
\begin{equation} \label{Epos1}
 E(x,v,\psi(x,v),q)\geq0 \quad\hbox{for all }\ ((x,v),q)\in\c{P}\times\R.
\end{equation}
Then $u^0$ is a strong minimizer.

If \eqref{Epos1} is only true for all $(x,v)\in\c{P}$ and $q=q(x,v)$ satisfying 
$|q-\psi(x,v)|\leq\eta$ for some $\eta>0$, then $u^0$ is a weak minimizer.

If $\c{P}=[a,b]\times\R$,
then $u^0$ is a global minimizer. 

\strut\hbox{\rm(ii)}
Let there exist a field of extremals satisfying  $\vp_\alpha>0$.
If, for $\alpha>0$ or $\alpha<0$, the reversed inequalities in \eqref{field1}
are true and one of them is strict (for example,
if $f^\alpha_p(a)\geq0>f^\alpha_p(b)$ for $\alpha>0$),  
then $u_0$ is not a weak minimizer.

\strut\hbox{\rm(iii)}
Assume \eqref{ass1} 
and let the sufficient conditions for a weak minimizer
in Theorem~\ref{thm-Jacobi1}(ii) be satisfied.
Then a field of extremals satisfying $\vp_\alpha>0$ and \eqref{field1} exists.
\end{theorem}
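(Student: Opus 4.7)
The plan is to derive parts (i) and (ii) by specialization of Theorem~\ref{thm-field}, and to prove part (iii) by constructing the field explicitly as a one-parameter family of extremals anchored at the natural boundary condition at $x=a$. For (i), note first that when $N=1$ the self-adjointness condition \eqref{self-adjoint} is automatically satisfied (it is vacuous). Since $\ID_a=\ID_b=\emptyset$, the sets $\R^N_{\c{D},x}$ equal all of $\R$, so \eqref{field-a} and \eqref{field-b} must hold for every $v$ in an $\eps$-neighborhood of $u^0(x)$. Writing $v=\vp(x,\alpha)$ and using $\vp_\alpha>0$, the difference $v-u^0(x)=\int_0^\alpha \vp_\alpha(x,s)\,ds$ has the same sign as $\alpha$; therefore \eqref{field-a} and \eqref{field-b} become exactly \eqref{field1}, and Theorem~\ref{thm-field}(i) applies. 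Part (ii) is obtained analogously: the reversed inequalities in \eqref{field1} translate to the reversed \eqref{field-a} and reversed strict \eqref{field-b} upon choosing $w^0\in\R$ with the appropriate sign.

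For (iii), let $h$ be the Jacobi solution furnished by Theorem~\ref{thm-Jacobi1}(ii), normalized by $h(a)=1$; then $h>0$ on $[a,b]$ (by the assumption $h(y)\ne0$ for $y\in(a,b]$ together with continuity) and $\c{B}h(b)>0$ (from $\c{B}h(b)h(b)>0$). For $\alpha$ near $0$, define $\vp(\cdot,\alpha)$ as the solution of the Euler equation $\frac{d}{dx}f_p(x,\vp,\vp_x)=f_u(x,\vp,\vp_x)$ with initial data
\begin{equation*}
 \vp(a,\alpha)=u^0(a)+\alpha,\qquad f_p(a,\vp(a,\alpha),\vp_x(a,\alpha))=0.
\end{equation*}
The implicit function theorem applies to the second equation at $\alpha=0$ because $f^0_{pp}(a)>0$ by the strengthened Legendre condition \eqref{f-conv}, producing a $C^1$ value of $\vp_x(a,\alpha)$; together with smooth dependence of ODE solutions on initial data (recall $f\in C^3$ from \eqref{ass1}), this yields $\vp,\vp_x\in C^1([a,b]\times(-\eps_0,\eps_0))$.

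It remains to verify $\vp_\alpha>0$ and \eqref{field1}. Differentiating the Euler equation in $\alpha$ at $\alpha=0$ shows that $\eta:=\vp_\alpha(\cdot,0)$ satisfies the Jacobi equation $\c{A}\eta=0$, with $\eta(a)=1$; differentiating the natural boundary condition gives $f^0_{pu}(a)+f^0_{pp}(a)\eta'(a)=0$, i.e. $\c{B}\eta(a)=0$. Uniqueness of the initial value problem forces $\eta\equiv h$ on $[a,b]$, so $\vp_\alpha(\cdot,0)=h>0$; by continuity $\vp_\alpha>0$ on $[a,b]\times(-\eps,\eps)$ for $\eps\in(0,\eps_0]$ small enough, hence $(x,\alpha)\mapsto(x,\vp(x,\alpha))$ is a $C^1$-diffeomorphism onto a field $\c{P}$. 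The left half of \eqref{field1} holds with equality by construction. For the right half, $\alpha\mapsto f^\alpha_p(b)$ is $C^1$ with $f^0_p(b)=0$ (the natural BC satisfied by $u^0$) and $\frac{d}{d\alpha}\big|_{\alpha=0}f^\alpha_p(b)=\c{B}\eta(b)=\c{B}h(b)>0$; therefore $f^\alpha_p(b)\alpha=\c{B}h(b)\alpha^2+O(\alpha^3)\ge0$ on $(-\eps,\eps)$ after a further shrinking of $\eps$.

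The only substantive step is part (iii); within it, the key observation is that anchoring the field at the natural boundary condition at $x=a$ makes the $a$-half of \eqref{field1} automatic and reduces the $b$-half to the sign of $\c{B}h(b)$, which is precisely what Theorem~\ref{thm-Jacobi1}(ii) provides. Given this structural choice, the rest amounts to a routine implicit-function-theorem construction and the identification $\vp_\alpha(\cdot,0)=h$ via linearization of the IVP.
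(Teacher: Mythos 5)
Your proposal is correct and follows essentially the same route as the paper: the paper states Theorem~\ref{thm-field1} as a direct reformulation of Theorem~\ref{thm-field} (parts (i)--(ii) via the observation that $\vp_\alpha>0$ makes $\vp(x,\alpha)-u^0(x)$ have the sign of $\alpha$, so \eqref{field-a}--\eqref{field-b} reduce to \eqref{field1}, with \eqref{self-adjoint} vacuous for $N=1$), and its Appendix proof of Theorem~\ref{thm-field}(iii) in the case $\ID_a=\emptyset$ is exactly your construction — anchoring the extremals at $f_p(a,v(\alpha),w(\alpha))=0$ via the implicit function theorem and identifying $\vp_\alpha(\cdot,0)$ with the Jacobi solution $h$ satisfying $\c{B}h(a)=0$. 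The only cosmetic point is that $f^\alpha_p(b)\alpha=\c{B}h(b)\,\alpha^2+o(\alpha^2)$ (rather than $O(\alpha^3)$) is what the $C^1$ dependence gives, which still suffices.
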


\begin{remark} \label{remPQ}\rm 
If $f^0_{up}=0$ and we set $P:=f^0_{pp}$, $Q:=f^0_{uu}$, 
then $\Psi(h)=\int_a^b(P(h')^2+Qh^2)\,dx$
and the Jacobi equation has the form $-\frac{d}{dx}(Ph')+Qh=0$.
Notice also that if $P,Q>0$, then $\Psi$ is positive definite in $W^{1,2}$.
Consequently, Remark~\ref{rem-Jacobi}(iii) implies that
the sufficient conditions for a weak minimizer in Theorem~\ref{thm-Jacobi1}(ii) are satisfied
and Theorem~\ref{thm-field1}(iii) implies the existence of a field
of extremals satisfying $\vp_\alpha>0$ and \eqref{field1}.
\qed
\end{remark}

In the following examples we will consider Lagrangians $f=f(u,p)$ and we will use
the phase plane analysis for the Du Bois-Reymond equation $f^0-(u^0)'f^0_p=C$.

%----------------------------------------
\begin{example} \label{ex-LG} \rm
The study of the deformation of a planar weightless inextensible
and unshearable rod (satisfying suitable boundary conditions)
leads to the minimization of the functional
\begin{equation} \label{Phi-LG}
 \Phi(u) =\int_0^1\bigl(\frac12(u'-K)^2+M\cos u\bigr)\,dx,\qquad u\in C^1([0,1]), 
\end{equation}
where $K\in\R$, $M>0$, and $u$ denotes the angle between the tangent to the rod 
and a suitable vertical, see \cite[(97)]{LG} and cf.~also \cite{B20}.
Functional $\Phi$ possesses multiple critical points, i.e.\
extremals satisfying the natural boundary
conditions $u'(0)=u'(1)=K$; see \cite{LG} for their detailed analysis.
Their stability was also analyzed in \cite{LG}, but that analysis
based on the approach from \cite{Man} is unnecessarily complicated.
Somewhat simpler arguments were used in \cite{B20}, but those arguments
cannot be used for all critical points.
We will show that Theorems~\ref{thm-Jacobi1} and~\ref{thm-field1} yield a very simple
way to determine the stability of any critical point.

Proposition~\ref{prop-ws} implies that
$u^0$ is a weak minimizer of $\Phi$ if and only if it is a strong minimizer.
Therefore we will only speak about minimizers.
Notice also that $f_{pp}=1$ and
the excess function satisfies $E(x,u,p,q)=\frac12(q-p)^2\geq0$.
Proposition~\ref{thm-Euler} guarantees that any critical point of $\Phi$ is $C^\infty$ 
and satisfies the Du Bois-Reymond equation
$(u')^2=2M\cos u+C$, where $C$ is a constant.
Conversely, any non-constant solution of the Du Bois-Reymond equation
is an extremal.

We consider the phase plane $(u,v)$, where $v=u'$, and set
$$\phi_C:=\{(u,v): v^2=2M\cos u+C\},\qquad C\in(-2M,\infty)$$ 
(see Figure~\ref{fig1}). 
The considerations above show that
given any non-constant critical point $u^0$, there exists $C^0>-2M$ such that $(u^0(x),(u^0)'(x))\in\phi_{C^0}$ for $x\in[0,1]$,
$(u^0)'(0)=(u^0)'(1)=K$. On the other hand, if $(A_0,K),(A_1,K)\in\Phi_{C^0}$ for some $C^0\in(2M,\infty)$, $A_0\ne A_1$, 
and $u^0\in C^1$ satisfies
$(u^0(x),(u^0)'(x))\in\phi_{C^0}$ for $x\in[0,1]$,
$(u^0(0),(u^0)'(0))=(A_0,K)$ and $(u^0(b),(u^0)'(b))=(A_1,K)$ for some $b>0$, then
$u^0$ is a critical point if and only if $b=1$ (the value of $b$ is uniquely determined in this case
since $(u^0)'\ne0$). 
Similar assertion is true if $C^0\in(-2M,2M]$
($K\ne0$ if $C^0=2M$),
 but this time 
one can have $(u^0(b),(u^0)'(b))=(A_1,K)$ for multiple values of $b$ (since $u^0$ need not be monotone),
and one has to allow $A_1=A_0$. 

The phase plane analysis can be used to find critical points of $\Phi$ 
(see \cite{Bed} for a particular case), but since those critical points
are known (see \cite{LG}, for example), we will restrict ourselves
to the determination of their stability.
More precisely, considering the case $K\geq0$ (the case $K\leq0$ being
symmetric), we will show the following:
A critical point of $\Phi$ is a minimizer if and only if
either $u^0(x)\equiv(2k+1)\pi$ for some integer $k$
or $u^0$ is a part of curve $\phi_{C^0}$ with $C^0>2M$
and $(u^0)''(0)<0<(u^0)''(1)$.

%----------
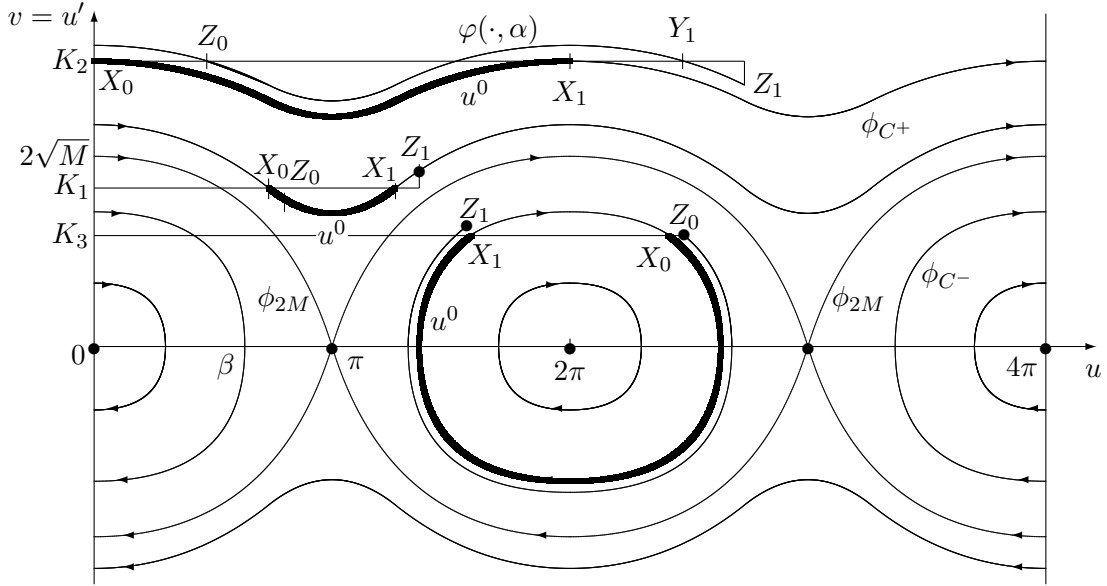
\begin{figure}[ht]
\centering
\begin{picture}(390,240)(-20,-100) 
\unitlength=.6pt
% 1x:
\put(0,0){\line(1,0){630}}
\put(630,0){\vector(1,0){2}}
\put(630,-15){\makebox(0,0)[c]{$u$}}
\put(300,-15){\makebox(0,0)[c]{$2\pi$}}
\put(300,-5){\line(0,1){10}}
\put(165,-10){\makebox(0,0)[c]{$\pi$}}
\put(83,-13){\makebox(0,0)[c]{$\beta$}}
\put(585,-13){\makebox(0,0)[c]{$4\pi$}}
\put(0,-150){\line(0,1){360}}   
\put(600,-150){\line(0,1){360}}
\put(0,210){\vector(0,1){2}}
\put(-30,210){\makebox(0,0)[c]{$v=u'$}}
\put(-10,-5){\makebox(0,0)[c]{$0$}}
\put(0,100){\line(1,0){205}} 
\put(205,100){\line(0,1){15}}
\put(190,93){\line(0,1){10}}
\put(203,125){\makebox(0,0)[c]{$Z_1$}}
\put(180,112){\makebox(0,0)[c]{$X_1$}}
\put(205,110){\makebox(0,0)[c]{$\bullet$}}
\put(133,110){\makebox(0,0)[c]{$Z_0$}}
\put(112,114){\makebox(0,0)[c]{$X_0$}}
\put(110,93){\line(0,1){13}}
\put(120,85){\line(0,1){12}}
\put(0,180){\line(1,0){410}}  
\put(255,200){\makebox(0,0)[c]{$\vp(\cdot,\alpha)$}}
\put(371,175){\line(0,1){10}}
\put(371,200){\makebox(0,0)[c]{$Y_1$}} 
\put(300,175){\line(0,1){10}}
\put(300,160){\makebox(0,0)[c]{$X_1$}} 
\put(75,195){\makebox(0,0)[c]{$Z_0$}}
\put(425,165){\makebox(0,0)[c]{$Z_1$}}
\put(410,165){\line(0,1){15}}
\put(71,175){\line(0,1){10}}
\put(-14,101){\makebox(0,0)[c]{$K_1$}}
\put(-15,180){\makebox(0,0)[c]{$K_2$}}
\put(14,167){\makebox(0,0)[c]{$X_0$}}
\put(-15,70){\makebox(0,0)[c]{$K_3$}}
\put(0,70){\line(1,0){140}} 
\put(160,70){\line(1,0){212}}
\put(-25,122){\makebox(0,0)[c]{$2\sqrt{M}$}}
\put(482,30){\makebox(0,0)[c]{$\phi_{2M}$}}
\put(120,30){\makebox(0,0)[c]{$\phi_{2M}$}}
\put(538,45){\makebox(0,0)[c]{$\phi_{C^-}$}}
\put(500,140){\makebox(0,0)[c]{$\phi_{C^+}$}}
\put(150,72){\makebox(0,0)[c]{$u^0$}}
\put(240,160){\makebox(0,0)[c]{$u^0$}}
\put(0,-2){\makebox(0,0)[c]{$\bullet$}}
\put(150,-2){\makebox(0,0)[c]{$\bullet$}}
\put(300,-2){\makebox(0,0)[c]{$\bullet$}}
\put(450,-2){\makebox(0,0)[c]{$\bullet$}}
\put(600,-2){\makebox(0,0)[c]{$\bullet$}}
\bezier500(0,190)(60,190)(110,165)
\bezier500(71,180)(86,175)(110,165)
\bezier500(110,165)(150,145)(190,165)
\bezier500(300,190)(240,190)(190,165)
\bezier500(300,190)(360,190)(410,165)
% 4x:
\bezier500(0,120)(113,120)(150,0)
\bezier500(300,120)(187,120)(150,0)
\put(20,119){\vector(1,0){2}}
\put(280,119){\vector(1,0){2}}
\bezier500(0,85)(95,85)(95,0)
\put(16,84){\vector(1,0){2}}
\bezier500(0,40)(45,40)(45,0)
\put(10,39){\vector(1,0){2}}
\bezier500(300,85)(205,85)(205,0)
\put(284,84){\vector(1,0){2}}
\bezier500(300,40)(255,40)(255,0)
\put(290,39){\vector(1,0){2}}
\bezier500(0,140)(60,140)(110,100)
\put(20,139){\vector(1,0){2}}
{\linethickness{2pt}
\bezier500(110,100)(150,68)(190,100)
}
\bezier500(300,140)(240,140)(190,100)
{\linethickness{2pt}
\bezier500(0,180)(60,180)(110,155)
\bezier500(110,155)(150,135)(190,155)
\bezier500(300,180)(240,180)(190,155)
}
\bezier500(300,120)(413,120)(450,0)
\bezier500(600,120)(487,120)(450,0)
\put(580,119){\vector(1,0){2}}
\bezier500(300,85)(395,85)(395,0)
{\linethickness{2pt}
\bezier500(362,70)(395,45)(395,0)
\bezier500(395,0)(395,-85)(300,-85)
\bezier500(300,-85)(205,-85)(205,0)
\bezier500(205,0)(205,45)(238,70)
}
\put(221,18){\makebox(0,0)[c]{$u^0$}}
\put(353,55){\makebox(0,0)[c]{$X_0$}} 
\put(372,82){\makebox(0,0)[c]{$Z_0$}}
\put(372,70){\makebox(0,0)[c]{$\bullet$}}
\bezier500(372,70)(402,45)(402,0)
\bezier500(402,0)(402,-92)(300,-92)
\bezier500(300,-92)(198,-92)(198,0)
\bezier500(198,0)(198,47)(235,76)
\put(235,76){\makebox(0,0)[c]{$\bullet$}}
\put(240,86){\makebox(0,0)[c]{$Z_1$}}
\put(247,57){\makebox(0,0)[c]{$X_1$}}
\put(584,84){\vector(1,0){2}}
\bezier500(300,40)(345,40)(345,0)
\put(590,39){\vector(1,0){2}}
\bezier500(600,85)(505,85)(505,0)
\bezier500(600,40)(555,40)(555,0)
\bezier500(300,140)(360,140)(410,100)
\put(580,139){\vector(1,0){2}}
\bezier500(410,100)(450,68)(490,100)
\bezier500(600,140)(540,140)(490,100)
\bezier500(300,180)(360,180)(410,155)
\bezier500(410,155)(450,135)(490,155)
\bezier500(600,180)(540,180)(490,155)
\put(580,179){\vector(1,0){2}}
\bezier500(0,-120)(113,-120)(150,0)
\bezier500(300,-120)(187,-120)(150,0)
\put(20,-119){\vector(-1,0){2}}
\put(280,-119){\vector(-1,0){2}}
\bezier500(0,-85)(95,-85)(95,0)
\put(16,-84){\vector(-1,0){2}}
\bezier500(0,-40)(45,-40)(45,0)
\put(10,-39){\vector(-1,0){2}}
\bezier500(300,-85)(205,-85)(205,0)
\put(284,-84){\vector(-1,0){2}}
\bezier500(300,-40)(255,-40)(255,0)
\put(290,-39){\vector(-1,0){2}}
\bezier500(0,-140)(60,-140)(110,-100)
\put(20,-139){\vector(-1,0){2}}
\bezier500(110,-100)(150,-68)(190,-100)
\bezier500(300,-140)(240,-140)(190,-100)
\bezier500(300,-120)(413,-120)(450,0)
\bezier500(600,-120)(487,-120)(450,0)
\put(580,-119){\vector(-1,0){2}}
\bezier500(300,-85)(395,-85)(395,0)
\put(584,-84){\vector(-1,0){2}}
\bezier500(300,-40)(345,-40)(345,0)
\put(590,-39){\vector(-1,0){2}}
\bezier500(600,-85)(505,-85)(505,0)
\bezier500(600,-40)(555,-40)(555,0)
\bezier500(300,-140)(360,-140)(410,-100)
\put(580,-139){\vector(-1,0){2}}
\bezier500(410,-100)(450,-68)(490,-100)
\bezier500(600,-140)(540,-140)(490,-100)
\end{picture}
\kern-2mm
   \caption{Phase plane and extremals for Example~\ref{ex-LG} and $0\leq u\leq4\pi$;
  $C^-<2M<C^+$,
   $Z_i=(\vp(i,\alpha),\vp_x(i,\alpha))$, $i=0,1$, $Y_1=(A_1+\alpha,K)$,
    $X_i=(A_i,K)=(u^0(i),(u^0)'(i))$, $i=0,1$.}
   \label{fig1}
\end{figure}

%----------

Let us first consider a critical point $u^0$ being a part of curve $\phi_{C^0}$ with $C^0>2M$,
and let $(A_i,K)$ be as above.
For symmetry reasons we may assume $K>0$. 
Notice that $u''=-M\sin u$, $|(u^0)''(0)|=|(u^0)''(1)|$,
and that $u^0(x)$ can also be defined
(as an extremal, hence a part of $\phi_{C^0}$) for $x\notin[0,1]$. 

If $(u^0)''(0)<0<(u^0)''(1)$ 
(i.e.~$u^0(0)\in(2k\pi,(2k+1)\pi)$ and $u^0(1)\in((2m+1)\pi,(2m+2)\pi)$ for some $m\geq k$;
see the extremal $u^0$ with $(u^0)'(0)=K_1$ in Figure~\ref{fig1}), 
then $\vp(x,\alpha):=u^0(x+\alpha)$, $x\in[0,1]$, $\alpha\in(-\eps,\eps)$, is a field of extremals for $u^0$
satisfying \eqref{field1}, hence Theorem~\ref{thm-field1}(i) guarantees that $u^0$ is a minimizer.
If $(u^0)''(0)>0>(u^0)''(1)$, then the same argument and Theorem~\ref{thm-field1}(ii)
show that $u^0$ is not a minimizer.

Next assume that $(u^0)''(0)\cdot(u^0)''(1)\geq0$. 
We will show that $u^0$ is not a minimizer.

Assume $(u^0)''(0)<0$, or $(u^0)''(0)=0$ and $(u^0)'''(0)<0$
(the cases $(u^0)''(0)>0$, or $(u^0)''(0)=0$ and $(u^0)'''(0)>0$ are analogous).
We necessarily have $A_1=A_0+2k_0\pi$ for some $k_0\in\{1,2,\dots\}$.
Let $\vp(\cdot,\alpha)$ (with $|\alpha|$ being small) 
be the extremal with initial values $Z_0:=(\vp(0,\alpha),\vp_x(0,\alpha))=(A_0+\alpha,K)$
(see the extremal $u^0$ with $(u^0)'(0)=K_2$ in Figure~\ref{fig1}). 
Then $\vp$ is a field of extremals for $u^0$, and
$\vp(\cdot,\alpha)$ is a part of the curve $\phi_{C^\alpha}$, where $C^\alpha$ is close to $C^0$,
$C^\alpha>C^0$ if $\alpha>0$.

Let $\alpha>0$ be small.
If $u^1$ and $u^2$ are extremals in $\phi_{C^0}$ and $\phi_{C^\alpha}$, respectively,
and $u^1(0)=u^2(0)=0$, then $u^1(b_1)=u^2(b_2)=2\pi$ for some $0<b_1<b_2$
(due to $(u^2)'>(u^1)'$ whenever $u^2=u^1$).
This fact and the $2\pi$-periodicity of the problem guarantee that
$\vp(b,\alpha)=A_1+\alpha$ for some $b<1$, hence $\vp_x(1,\alpha)<(u^0)'(1)$,
and Theorem~\ref{thm-field1}(ii) implies that $u^0$ is not a minimizer.

Next consider the case $C^0\in(-2M,2M]$ and $K\geq0$; $K\ne0$ if $C^0=2M$.
If $K>0$ and
 $(u^0)''(0)>0>(u^0)''(1)$, then the same arguments as above guarantee that $u^0$ is not a minimizer.
If $K=0$ or $(u^0)''(0)<0<(u^0)''(1)$ (hence $A_1<A_0$) or
$(u^0)''(0)\cdot(u^0)''(1)\geq0$ (hence $A_0=A_1=2k\pi$),
then choosing $\vp(\cdot,\alpha)$ to be an extremal satisfying initial conditions 
$(\vp(0,\alpha),\vp_x(0,\alpha))=(A_0+\alpha,K)$
we see from the phase plane that $\vp(\cdot,\alpha)$ and $u^0$
intersect in $(0,1)$ for any $\alpha\ne0$ small
(if, for example, 
$(u^0)''(0)<0<(u^0)''(1)$ and $\alpha>0$ is small, then
there exists $y\in(0,1)$ such that $\vp(y,\alpha)=\min\vp(\cdot,\alpha)<\min u^0$,
and the inequalities 
$\vp(0,\alpha)>u^0(0)$, $\vp(y,\alpha)<u^0(y)$ imply that
$\vp(\cdot,\alpha)$ and $u^0$ intersect in $(0,y)$; 
see the extremal $u^0$ with $(u^0)'(0)=K_3$ in Figure~\ref{fig1}). 
Consequently, $h:=\vp_x(\cdot,0)$
is a solution of the Jacobi equation satisfying $h(0)=1$, $h'(0)=0$,
$h(y)=0$ for some $y\in(0,1]$, and Theorem~\ref{thm-Jacobi1}
guarantees that $u^0$ is not a minimizer.

Similar considerations as above can be used in the case of constant extremals
$k\pi$, but we will use a different argument:
If $u^0\equiv(2k+1)\pi$, then $P=1$, $Q=-M\cos u^0=M$, 
and the solution $h(x)=e^{\sqrt{M}x}+e^{-\sqrt{M}x}$ of the Jacobi equation
satisfies $h>0$, $h'(0)=0$, $h'(1)>0$, hence $u^0$ is a minimizer.
If $u^0\equiv2k\pi$, then $P=1$, $Q=-M$
and the solution $h(x)=\cos(\sqrt{M}x)$ of the Jacobi equation satisfies
$h(0)>0$, $h'(0)=0$ and
 either
$h(x)=0$ for some $x\in(0,1]$ or $h'(1)<0$, hence $u^0$ is not a minimizer.  
\qed
\end{example}

%-------------------------------------------
\begin{remark} \label{rem-Jin} \rm
The author of \cite{Jin} considers the functional $\Phi$ in \eqref{Phi-LG}
with $K=0$, $[a,b]=[-1/2,1/2]$ (instead of $[a,b]=[0,1]$), and 
the Dirichlet boundary conditions $u(-1/2)=u(1/2)=0$, see \cite[(6)]{Jin}.
He considers the extremal $u^0$ satisfying $u^0(0)=\beta$ and $(u^0)'(0)=0$,
i.e.~the extremal passing through the point $(\beta,0)$ in Figure~\ref{fig1},
and he provides explicit formulas for this extremal, its field of extremals $\vp$
and the derivative $\vp_\alpha$ (see \cite[(8),(9),(13),(14) and (16)]{Jin};
functions $u^0,\vp$ and $\vp_\alpha$ are denoted by $\theta,y$ and $\partial y/\partial\gamma$,
respectively).  
The nonnegativity of the excess function then implies
that $u^0$ is a strong minimizer.
In \cite[Introduction]{Jin}, the author claims that
``Based on the Jacobian test, potential energy of Euler elasticas \dots\ 
was proved to hold a weak minimum value\dots'', but
``\dots it is an open problem to find sufficient conditions for the potential energy
for these Euler elasticas to hold a strong minimum.''
However, Proposition~\ref{prop-ws} shows that weak and strong minimizers of 
functional $\Phi$ in \eqref{Phi-LG} are equivalent.
In addition, Theorem~\ref{thm-field}(iii) implies that the positive
definiteness of the second variation $\psi$ in $W^{1,2}_0(-1/2,1/2)$ 
(i.e.~the sufficient condition for a weak minimizer) 
guarantees the existence of the required field $\vp$,
hence the technical construction of the field in \cite{Jin} is not necessary
even if we do not consider Proposition~\ref{prop-ws}.
\qed
\end{remark}

%-----------------------------------------------
\begin{example} \label{ex-well} \rm 
Consider the functional $\Phi(u)=\int_a^b f(u,u')\,dx$ in $C^1([a,b])$,
where $f(u,p)=g(p)+u^2$ and $g$ is a double-well function.
More precisely, we will consider the following two cases (see Figure~\ref{fig6}): 

%----------
\begin{figure}[ht]
\centering
\begin{picture}(300,100)(0,-15)
\unitlength=1pt
\put(0,0){\line(1,0){100}}
\put(50,0){\line(0,1){80}}
\put(100,0){\vector(1,0){2}}
\put(50,-10){\makebox(0,0)[c]{$0$}}
\put(21,-10){\makebox(0,0)[c]{$-1$}}
\put(75,-10){\makebox(0,0)[c]{$1$}}
\put(100,-10){\makebox(0,0)[c]{$p$}}
\put(12,42){\makebox(0,0)[c]{$g$}}
\put(30,70){\makebox(0,0)[c]{(a)}} 
\bezier500(0,45)(10,0)(25,0)
\bezier500(25,0)(30,0)(37,12)
\bezier500(37,12)(45,25)(50,25)
\bezier500(100,45)(90,0)(75,0)
\bezier500(75,0)(70,0)(63,12)
\bezier500(63,12)(55,25)(50,25)
\put(150,0){\line(1,0){125}}
\put(200,0){\line(0,1){80}}
\put(175,0){\line(0,1){50}}
\put(275,0){\vector(1,0){2}}
\put(200,-10){\makebox(0,0)[c]{$0$}}
\put(171,-10){\makebox(0,0)[c]{$-1$}}
\put(250,-10){\makebox(0,0)[c]{$2$}}
\put(275,-10){\makebox(0,0)[c]{$p$}}
\put(168,62){\makebox(0,0)[c]{$g$}}
\put(230,70){\makebox(0,0)[c]{(b)}} 
\bezier500(150,70)(160,50)(175,50)
\bezier500(175,50)(180,50)(185,52)
\bezier500(185,52)(190,55)(200,55)
\bezier500(200,55)(210,55)(220,30)
\bezier500(220,30)(232,0)(250,0)
\bezier500(250,0)(260,0)(275,50)
\end{picture}
\kern-2mm
   \caption{Graphs of $g$ in the symmetric and non-symetric cases.}
   \label{fig6}
\end{figure}
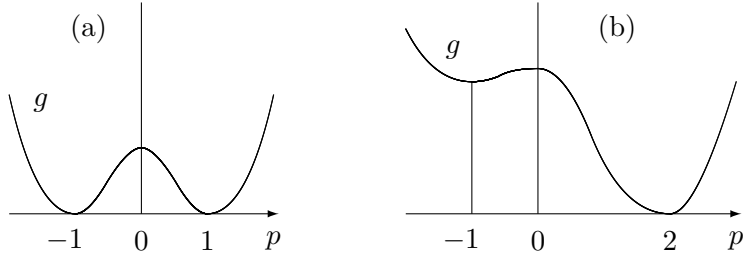
%----------
 
(a) $g(p)=(p^2-1)^2$ (hence $g'(p)=4p(p^2-1)$, $g''(p)=4(3p^2-1)$),
 
(b) $g(p)=\frac14 p^4-\frac13 p^3-p^2+\frac83$
(hence $g'(p)=(p+1)p(p-2)$, $g''(p)=3p^2-2p-2$).

Let us consider the symmetric case (a) first.
The Du Bois-Reymond equation has the form
$$ u^2=C+h(u'), \qquad\hbox{where}\quad h(p):=3p^4-2p^2, $$
see Figures~\ref{fig7} and~\ref{fig8} 
for the graph of $h$ and the phase plane $(u,u')$, respectively.
All minimizers have to satisfy $u'(a),u'(b)\in\{0,\pm1\}$;
the only constant extremal
is $u\equiv0$.
Functional $\Phi$ does not possess local maximizers since $\Phi''(u^0)(1,1)>0$ for any $u^0$.

%----------
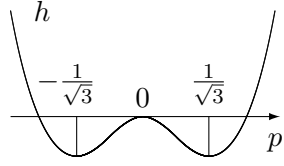
\begin{figure}[ht]
\centering
\begin{picture}(100,60)(0,-15)
\unitlength=1pt
\put(0,0){\line(1,0){100}}
\put(100,0){\vector(1,0){2}}
\put(50,7){\makebox(0,0)[c]{$0$}}
\put(21,12){\makebox(0,0)[c]{$-\frac1{\sqrt3}$}}
\put(75,12){\makebox(0,0)[c]{$\frac1{\sqrt3}$}}
\put(100,-10){\makebox(0,0)[c]{$p$}}
\put(12,40){\makebox(0,0)[c]{$h$}}
\put(25,0){\line(0,-1){15}}
\put(75,0){\line(0,-1){15}}
\bezier500(0,40)(10,-15)(25,-15)
\bezier500(25,-15)(30,-15)(37,-8)
\bezier500(37,-8)(45,0)(50,0)
\bezier500(100,40)(90,-15)(75,-15)
\bezier500(75,-15)(70,-15)(63,-8)
\bezier500(63,-8)(55,0)(50,0)
\end{picture}
\kern-2mm
   \caption{Graph of $h$ in the symmetric case.}
   \label{fig7}
\end{figure}
%----------

%----------
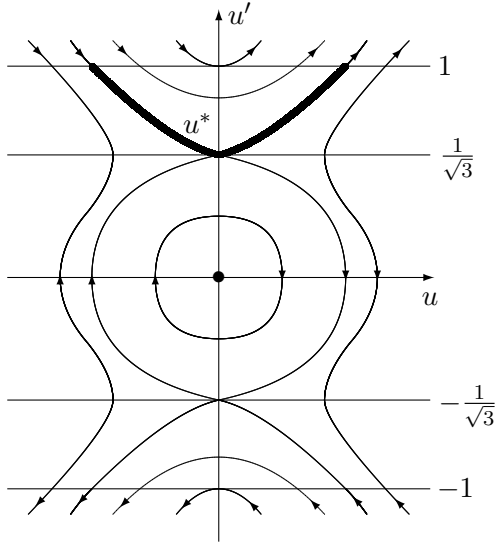
\begin{figure}[ht]
\centering
\begin{picture}(160,200)(-80,-100) 
\unitlength=0.8pt
\put(-100,0){\line(1,0){200}}
\put(100,0){\vector(1,0){2}}
\put(100,-10){\makebox(0,0)[c]{$u$}}
\put(-100,100){\line(1,0){200}}
\put(107,100){\makebox(0,0)[c]{$1$}}
\put(112,-100){\makebox(0,0)[c]{$-1$}}
\put(-100,58){\line(1,0){200}}
\put(113,56){\makebox(0,0)[c]{$\frac1{\sqrt3}$}}
\put(118,-61){\makebox(0,0)[c]{$-\frac1{\sqrt3}$}}
\put(-100,-58){\line(1,0){200}}
\put(-100,-100){\line(1,0){200}}
\put(0,-125){\line(0,1){250}}
\put(0,125){\vector(0,1){2}}
\put(10,125){\makebox(0,0)[c]{$u'$}}
\put(-10,73){\makebox(0,0)[c]{$u^*$}}
\put(0,0){\makebox(0,0)[c]{$\bullet$}}
\bezier300(-70,112)(-65,105)(-60,100)
{\linethickness{2pt}
\bezier500(-60,100)(-23,63)(0,58)
\bezier500(0,58)(23,63)(60,100)
}
\bezier300(70,112)(65,105)(60,100)
\bezier300(-70,-112)(-65,-105)(-60,-100)
\bezier300(70,-112)(65,-105)(60,-100)
\put(65,-105.5){\vector(-1,1){2}}
\put(-65,-105.5){\vector(-1,-1){2}}
\put(-65,105.5){\vector(1,-1){2}}
\put(65,105.5){\vector(1,1){2}}
\bezier500(-60,-100)(-23,-63)(0,-58)
\bezier500(0,-58)(23,-63)(60,-100)
\bezier300(-50,112)(0,58)(50,112)
\put(-45,107){\vector(1,-1){2}}
\put(45,107){\vector(1,1){2}}
\bezier300(-20,112)(0,88)(20,112)
\put(-16,107.7){\vector(1,-1){2}}
\put(16,107.7){\vector(1,1){2}}
\bezier300(-50,-112)(0,-58)(50,-112)
\put(-45,-107){\vector(-1,-1){2}}
\put(45,-107){\vector(-1,1){2}}
\bezier300(-20,-112)(0,-88)(20,-112)
\put(-16,-107.7){\vector(-1,-1){2}}
\put(16,-107.7){\vector(-1,1){2}}
\bezier500(-60,0)(-60,45)(0,58)
\put(-60,0){\vector(0,1){2}}
\put(60,0){\vector(0,-1){2}}
\bezier500(-60,0)(-60,-45)(0,-58)
\bezier500(60,0)(60,45)(0,58)
\bezier500(60,0)(60,-45)(0,-58)
\bezier500(-30,0)(-30,29)(0,29)
\bezier500(30,0)(30,29)(0,29)
\bezier500(-30,0)(-30,-29)(0,-29)
\bezier500(30,0)(30,-29)(0,-29)
\put(-30,0){\vector(0,1){2}}
\put(30,0){\vector(0,-1){2}}
\bezier500(-90,112)(-50,70)(-50,58)
\put(-85,106.5){\vector(1,-1){2}}
\bezier500(-50,58)(-50,46)(-62,31)
\bezier500(-62,31)(-75,16)(-75,0)
\put(-75,0){\vector(0,1){2}}
\put(75,0){\vector(0,-1){2}}
\bezier500(90,112)(50,70)(50,58)
\put(85,106.5){\vector(1,1){2}}
\bezier500(50,58)(50,46)(62,31)
\bezier500(62,31)(75,16)(75,0)
\bezier500(-90,-112)(-50,-70)(-50,-58)
\put(-85,-106.5){\vector(-1,-1){2}}
\bezier500(-50,-58)(-50,-46)(-62,-31)
\bezier500(-62,-31)(-75,-16)(-75,0)
\bezier500(90,-112)(50,-70)(50,-58)
\put(85,-106.5){\vector(-1,1){2}}
\bezier500(50,-58)(50,-46)(62,-31)
\bezier500(62,-31)(75,-16)(75,0)
\end{picture}
\kern-2mm
   \caption{Phase plane in the symmetric case.}
   \label{fig8}
\end{figure}
%----------

Since $f_{pp}(u,p)=4(3p^2-1)$, the extremals in the region
$|u'|\leq1/\sqrt3$ (satisfying $(u^0)'(a)=(u^0)'(b)=0$)
cannot be local minimizers.
The extremal $u^*$ with $(u^*)'(a)=1$ and $\min(u^*)'=1/\sqrt3$ (see Figure~\ref{fig8}) 
satifies $u^*(b^*)=1$ for some $b^*>a$.
If $b\in(a,b^*)$, then there exists a unique extremal $u^0$
satisfying $(u^0)'(a)=(u^0)'(b)=1$
(and a unique extremal $u^1$ satisfying $(u^1)'(a)=(u^1)'(b)=-1$);
in addition $(u^0)'>1/\sqrt3$ (and $(u^1)'<-1/\sqrt3$).
Since $P,Q>0$ and the excess function $E=(q-p)^2((q+p)^2+2(p^2-1))$ 
considered as a function of $q$
changes sign if $|p|<1$,
Remarks~\ref{remPQ} and \ref{rem-field3} show that 
the extremals $u^0,u^1$ are weak but not strong minimizers.
(Remark~\ref{remPQ} also guarantees the existence
of a field of extremals, but this fact is not needed here:
The Weierstrass necessary condition for strong minimizers
in Remark~\ref{rem-field3}
does not require the existence of a field of extremals.)
Notice also that $\inf\Phi=0$ is not attained (neither in $C^1$, nor in $W^{1,4}$):
A minimizing sequence in $C^1$ can be obtained by suitable smooth approximation of 
piecewise $C^1$-functions $u_\eps$ satisfying $|u_\eps'|=1$ a.e. and
$|u_\eps|\leq\eps$.

Next consider the nonsymmetric case (b).
The Du Bois-Reymond equation has the form
$$ u^2=C+h(u'), \qquad\hbox{where}\quad h(p):=\frac34 p^4-\frac23 p^3-p^2,$$
see Figures~\ref{fig9} and~\ref{fig10} 
for the graph of $h$ and the phase plane $(u,u')$, respectively. 
All minimizers have to satisfy $u'(a),u'(b)\in\{0,-1,2\}$;
the only constant extremal 
is $u\equiv0$.

%----------
\begin{figure}[ht]
\centering
\begin{picture}(150,90)(0,-40)
\unitlength=1pt
\put(0,0){\line(1,0){150}}
\put(150,0){\vector(1,0){2}}
\put(50,7){\makebox(0,0)[c]{$0$}}
\put(23,12){\makebox(0,0)[c]{$\frac{1-\sqrt7}3$}}
\put(104,12){\makebox(0,0)[c]{$\frac{1+\sqrt7}3$}}
\put(150,-10){\makebox(0,0)[c]{$p$}}
\put(9,40){\makebox(0,0)[c]{$h$}}
\put(25,0){\line(0,-1){5}}
\put(104,0){\line(0,-1){40}}
\bezier500(0,40)(10,-5)(25,-5)
\bezier500(25,-5)(30,-5)(40,-2)
\bezier500(40,-2)(45,0)(50,0)
\bezier500(150,40)(130,-40)(104,-40)
\bezier500(104,-40)(80,-40)(73,-20)
\bezier500(73,-20)(66,0)(50,0)
\end{picture}
\kern-2mm
   \caption{Graph of $h$ in the non-symmetric case.}
   \label{fig9}
\end{figure}
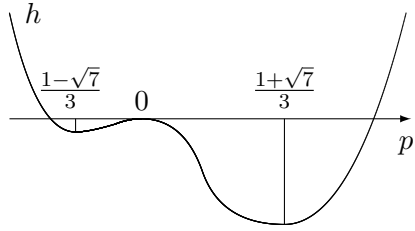
%----------
\begin{figure}[ht]
\centering
\begin{picture}(200,290)(-100,-100) 
\unitlength=0.8pt
\put(-100,0){\line(1,0){200}}
\put(100,0){\vector(1,0){2}}
\put(100,-10){\makebox(0,0)[c]{$u$}}
\put(-160,200){\line(1,0){320}}
\put(167,200){\makebox(0,0)[c]{$2$}}
\put(112,-100){\makebox(0,0)[c]{$-1$}}
\put(-100,121){\line(1,0){200}}
\put(120,122){\makebox(0,0)[c]{$\frac{1+\sqrt7}3$}}
\put(120,-58){\makebox(0,0)[c]{$\frac{1-\sqrt7}3$}} 
\put(-100,-58){\line(1,0){200}}
\put(-100,-100){\line(1,0){200}}
\put(0,-125){\line(0,1){350}}
\put(0,225){\vector(0,1){2}}
\put(10,225){\makebox(0,0)[c]{$u'$}}
\put(-25,140){\makebox(0,0)[c]{$u^*$}}
\put(0,0){\makebox(0,0)[c]{$\bullet$}}

\bezier300(-70,-112)(-65,-105)(-60,-100)
\bezier300(70,-112)(65,-105)(60,-100)
\put(65,-105.5){\vector(-1,1){2}}
\put(-65,-105.5){\vector(-1,-1){2}}
\bezier300(-140,224)(-130,210)(-120,200)
{\linethickness{2pt}
\bezier500(-120,200)(-46,126)(0,121)
\bezier500(0,121)(46,126)(120,200)
}
\bezier300(140,224)(130,210)(120,200)
\put(-130,211){\vector(1,-1){2}}
\put(130,211){\vector(1,1){2}}
\bezier300(-50,-112)(0,-58)(50,-112)
\put(-45,-107){\vector(-1,-1){2}}
\put(45,-107){\vector(-1,1){2}}
\bezier300(-20,-112)(0,-88)(20,-112)
\put(-16,-107.7){\vector(-1,-1){2}}
\put(16,-107.7){\vector(-1,1){2}}
\bezier500(-30,0)(-30,-29)(0,-29)
\bezier500(30,0)(30,-29)(0,-29)
\put(-30,0){\vector(0,1){2}}
\put(30,0){\vector(0,-1){2}}
\bezier500(-90,-112)(-50,-70)(-50,-58)
\put(-85,-106.5){\vector(-1,-1){2}}
\bezier500(-50,-58)(-50,-48)(-62,-40)
\bezier500(-62,-40)(-90,-18)(-90,0)
\bezier500(90,-112)(50,-70)(50,-58)
\put(85,-106.5){\vector(-1,1){2}}
\bezier500(50,-58)(50,-48)(62,-40)
\bezier500(62,-40)(90,-16)(90,0)
\bezier500(-60,-100)(-23,-63)(0,-58)
\bezier500(0,-58)(23,-63)(60,-100)
\put(-60,0){\vector(0,1){2}}
\put(60,0){\vector(0,-1){2}}
\bezier500(-60,0)(-60,-45)(0,-58)
\bezier500(60,0)(60,-45)(0,-58)
\put(-90,0){\vector(0,1){2}}
\put(90,0){\vector(0,-1){2}}
\bezier300(-100,224)(0,116)(100,224)
\put(-90,214){\vector(1,-1){2}}
\put(90,214){\vector(1,1){2}}
\bezier300(-40,224)(0,176)(40,224)
\put(-32,215.4){\vector(1,-1){2}}
\put(32,215.4){\vector(1,1){2}}
\bezier500(60,0)(60,105)(0,121)
\bezier500(-60,0)(-60,105)(0,121)
\bezier500(-30,0)(-30,60)(0,60)
\bezier500(30,0)(30,60)(0,60)
\bezier500(-170,224)(-70,140)(-70,121)
\put(-157,213){\vector(1,-1){2}}
\bezier500(-70,121)(-70,110)(-80,70)
\bezier500(-80,70)(-90,30)(-90,0)
%
%---------
\bezier500(170,224)(70,140)(70,121)
\put(157,213){\vector(1,1){2}}
\bezier500(70,121)(70,110)(80,70)
\bezier500(80,70)(90,30)(90,0)
\end{picture}
\kern-2mm
   \caption{Phase plane in the non-symmetric case.}
   \label{fig10}
\end{figure}
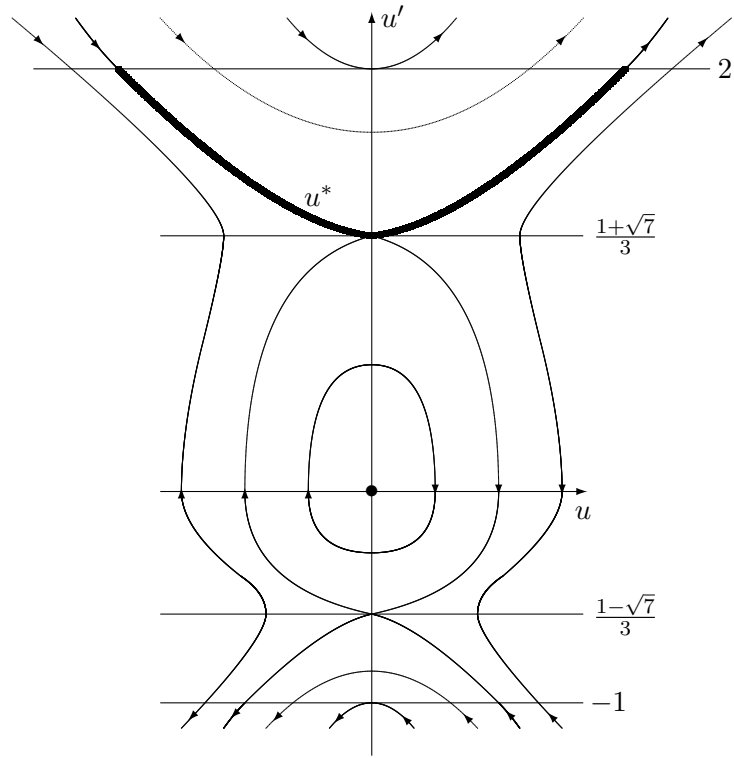
%----------

Since $f_{pp}(u,p)=3p^2-2p-2$, similarly as in case (a) we see that
the extremals in the region $u'\in[\frac{1-\sqrt7}3,\frac{1+\sqrt7}3]$
are neither local minimizers nor local maximizers.
The extremal $u^*$ with $(u^*)'(a)=2$ and $\min(u^*)'=\frac{1+\sqrt7}3$ (see Figure~\ref{fig10})  
satifies $u^*(b^*)=2$ for some $b^*>a$.
If $b\in(a,b^*)$, then there exists a a unique extremal $u^0$
satisfying $(u^0)'(a)=(u^0)'(b)=2$ and, as above, this extremal 
is a weak local minimizer.
However, now 
$E=\frac1{12}(q-p)^2((\sqrt3(q+p)-\frac2{\sqrt3})^2+6p^2-4p-13\frac13)\geq0$ for
all $q$
if $p\leq p_1$ or $p\geq p_2$, where
if $p_1=\frac13(1-\sqrt{21})<-1$, $p_2=\frac13(1+\sqrt{21})\in(\frac13(1+\sqrt7),2)$,
and Remark~\ref{remPQ} guarantees the existence of a field of
extremals satisfying $\vp_\alpha>0$ and \eqref{field1},   
hence $u^0$ is a strong local minimizer
provided $\min (u^0)'>p_2$ 
(and it is not if $\min (u^0)'<p_2$).
In fact, if $\min (u^0)'>p_2$, then Proposition~\ref{prop-global} below shows
the existence of a global field of extremals for $u^0$
satisfying the assumptions of Theorem~\ref{thm-field1}(i), with slope 
$\psi>p_2$, hence $u^0$ is a global minimizer.

An analogous analysis as in the case $u'>\frac{1+\sqrt7}3$ shows that the extremals
in the region $u'<\frac{1-\sqrt7}3$ are weak but not strong local minimizers.
\qed
\end{example}

\begin{proposition} \label{prop-global}
Let $\Phi$ and $p_2$ be as in Example~\ref{ex-well}(b),
and let $u^0$ be a critical point of $\Phi$ satisfying $\min (u^0)'>p_2$.
Then there exists a global field of extremals for $u^0$
satisfying the assumptions of Theorem~\ref{thm-field1}(i), with slope 
$\psi>p_2$.
\end{proposition}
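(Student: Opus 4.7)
The plan is to construct the global field by solving, for each $\alpha\in\R$, the two-point boundary value problem
\begin{equation*}
 \vp(a,\alpha)=u_a+\alpha,\qquad \vp(b,\alpha)=u_b+\alpha,\qquad \vp_x>p_2 \text{ on }[a,b],
\end{equation*}
where $u_a:=u^0(a)$ and $u_b:=u^0(b)$. A key preliminary is the antisymmetry $u^0(x)=-u^0(a+b-x)$, which implies $u_b=-u_a$: the Euler equation $u''=2u/g''(u')$ is invariant under $(u,x)\mapsto(-u,a+b-x)$, and uniqueness of the extremal satisfying the natural boundary conditions with $\min(u^0)'>p_2$ established in Example~\ref{ex-well}(b) forces $u^0$ to coincide with its reflection. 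The same symmetry propagates to the whole family via uniqueness of the BVP solution, giving $\vp(a+b-x,-\alpha)=-\vp(x,\alpha)$ and in particular $\vp_x(b,\alpha)=\vp_x(a,-\alpha)$, so the transversality condition at $b$ will reduce to the one at $a$.

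For existence and uniqueness of $\vp(\cdot,\alpha)$ I would start from $\vp(\cdot,0)=u^0$ and continue in $\alpha$. Local well-posedness comes from the implicit function theorem applied to the shooting map sending the initial slope $\vp_x(a,\alpha)$ to the terminal value $\vp(b,\alpha)$; its linearization is $h_2(b;\alpha)$, which is positive thanks to $g''(\vp_x)>0$ on $\{u'>p_2\}$ and $f_{uu}=2>0$. Along the branch, the Du Bois-Reymond relation
\begin{equation*}
 h(\vp_x(b,\alpha))-h(\vp_x(a,\alpha))=(u_b+\alpha)^2-(u_a+\alpha)^2=4u_b\alpha
\end{equation*}
provides an algebraic constraint between the endpoint slopes, while the traversal-time condition determines the orbit uniquely; monotone dependence of the phase-plane flow on the initial slope then extends the branch to all $\alpha\in\R$.

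Given the family, I would verify the hypotheses of Theorem~\ref{thm-field1}(i) by differentiating the BVP in $\alpha$. The function $w:=\vp_\alpha(\cdot,\alpha)$ solves the Jacobi equation $(Pw')'=Qw$ with $P:=g''(\vp_x)>0$, $Q:=f_{uu}=2>0$, and boundary values $w(a)=w(b)=1$; since $Q>0$ excludes any nonpositive interior minimum, $w>0$ on $[a,b]$, i.e.\ $\vp_\alpha>0$. For the transversality at $a$ we have $\partial_\alpha\vp_x(a,\alpha)=(1-h_1(b;\alpha))/h_2(b;\alpha)$, where $h_1,h_2$ are the Jacobi fundamental solutions along $\vp(\cdot,\alpha)$ with initial data $(1,0)$ and $(0,1)$ at $a$; since $(Ph_1')'(a)=Q(a)=2>0$, standard Sturm-type arguments using $P,Q>0$ give $h_1>1$ and $h_2>0$ on $(a,b]$, hence $\partial_\alpha\vp_x(a,\alpha)<0$ for every $\alpha$. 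Combined with $\vp_x(a,0)=2$, the strict monotonicity yields $g'(\vp_x(a,\alpha))\,\alpha\le0$ for all $\alpha$, and the antisymmetry gives the analogous condition at $b$.

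The hard part will be establishing the global existence in the BVP: the continuation from $\alpha=0$ must be shown to extend to all $\alpha\in\R$ without the extremal leaving $\{u'>p_2\}$ or the shooting map developing a singularity, which requires uniform phase-plane control of the orbits, with separate treatment of the small-$|\alpha|$ regime (orbit close to that of $u^0$) and the large-$|\alpha|$ regime (extremal nearly linear with average slope $2|u_a|/(b-a)\in(p_2,2)$).
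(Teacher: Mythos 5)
Your local analysis is mostly sound (the Dirichlet--Jacobi nondegeneracy from $P,Q>0$, the maximum-principle argument for $\vp_\alpha>0$, the shooting formula $\partial_\alpha\vp_x(a,\alpha)=(1-h_1(b))/h_2(b)<0$, and the sign bookkeeping for $g'$ on $(p_2,2)$ and $(2,\infty)$), but the construction you build it on --- the two-point problem $\vp(a,\alpha)=u_a+\alpha$, $\vp(b,\alpha)=u_b+\alpha$ with $\vp_x>p_2$ --- does not admit solutions for large $|\alpha|$, so the step you defer as ``the hard part'' is not merely hard, it fails. Quantitatively: any extremal with slope above the inflection values lies on an orbit $u^2=C+h(u')$ with $h$ increasing there, so your own Du Bois-Reymond identity gives $h(\vp_x(b,\alpha))-h(\vp_x(a,\alpha))=4u_b\alpha\to\infty$; if $\vp_x(a,\alpha)\geq p_2$ this forces $\vp_x(b,\alpha)\to\infty$ and, since $u''=2u/g''(u')$ with $u\approx\alpha$ makes $(\vp_x)^3$ grow roughly linearly at rate $2\alpha$, one gets $\int_a^b\vp_x\,dx\gtrsim\alpha^{1/3}(b-a)^{4/3}\to\infty$, contradicting the fixed rise $u_b-u_a=2u_b$. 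Equivalently, the traversal time of the orbit segment between heights $u_a+\alpha$ and $u_b+\alpha$ tends to $0$ uniformly in $C$ as $\alpha\to\infty$, so no orbit takes time $b-a$. Your picture of the large-$\alpha$ extremal as ``nearly linear with average slope in $(p_2,2)$'' is exactly backwards: large $u$ forces large curvature, and the only way to keep the average slope fixed is to leave the region $\{u'>p_2\}$, which destroys both the slope condition of the Proposition and the nonnegativity of the excess function.

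The paper avoids this by not prescribing vertically translated endpoint data at all. For $0\leq\alpha\leq b-a$ it takes \emph{horizontal} translates $\vp(x,\alpha)=u^0(x+\alpha)$ (extending $u^0$ as an extremal past $b$, where its slope rises above $2$), and for $\alpha>b-a$ it launches new extremals from $x=a$ at height $u^0(b)+\alpha-(b-a)$ with initial slope exactly $2$; there $u>0$ and $g''>0$ give $u''>0$, so the slope stays above $2>p_2$ automatically, and the transversality at $a$ holds with equality because $g'(2)=0$ (the two regimes are glued smoothly over a small $\alpha$-interval, and $\alpha<0$ is handled symmetrically from the endpoint $b$). The condition $\vp_\alpha>0$ is then verified directly from the differentiated Du Bois-Reymond relation $w_x=(h^{-1})'(\vp^2-C(\alpha))[2\vp w-C'(\alpha)]$ rather than from a boundary value problem. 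If you want to salvage your approach, you would have to abandon the vertical-translation ansatz for the endpoint data and adopt some such hybrid parametrization; as stated, the continuation to all $\alpha\in\R$ cannot be carried out.
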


\begin{proof}
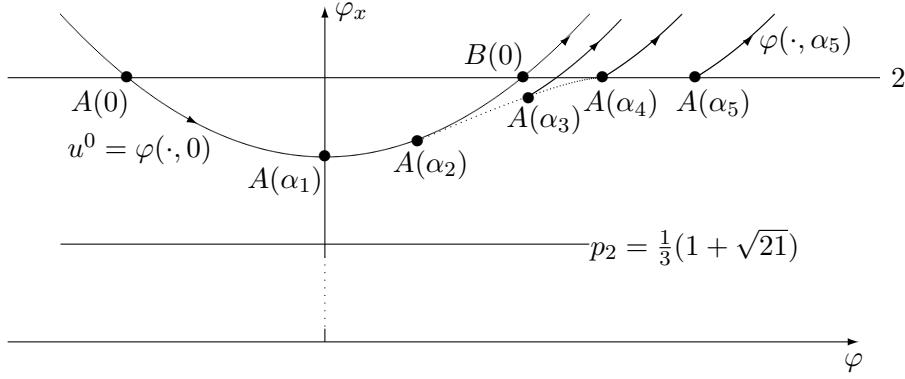
\begin{figure}[ht]
\centering
\begin{picture}(240,163)(-70,90)  
\unitlength=1pt
\put(0,132){\line(0,1){93}}
\put(-120,200){\line(1,0){330}}
\put(217,200){\makebox(0,0)[c]{$2$}}
\put(-100,137){\line(1,0){200}}
\put(140,136){\makebox(0,0)[c]{$p_2=\frac13(1+\sqrt{21})$}}  
\put(0,225){\vector(0,1){2}}
\put(10,225){\makebox(0,0)[c]{$\vp_x$}}
\put(-120,100){\line(1,0){320}}
\put(200,100){\vector(1,0){2}}
\put(200,92){\makebox(0,0)[c]{$\vp$}}
\put(0,100){\line(0,1){5}}
\bezier9(0,106)(0,119)(0,132) 
\bezier500(-100,224)(0,116)(100,224)
\put(-50,183.5){\vector(2,-1){2}}
\put(-75,200){\makebox(0,0)[c]{$\bullet$}}
\put(-85,190){\makebox(0,0)[c]{$A(0)$}}
\put(-70,173){\makebox(0,0)[c]{$u^0=\vp(\cdot,0)$}}
\put(75,200){\makebox(0,0)[c]{$\bullet$}}
\put(64,208){\makebox(0,0)[c]{$B(0)$}}
\put(0,170){\makebox(0,0)[c]{$\bullet$}}  
\put(-15,161){\makebox(0,0)[c]{$A(\alpha_1)$}}
\put(35,176){\makebox(0,0)[c]{$\bullet$}}
\put(40,167){\makebox(0,0)[c]{$A(\alpha_2)$}}
\bezier50(35,176)(90,200)(105,200)
\put(77,192){\makebox(0,0)[c]{$\bullet$}} 
\bezier150(77,193)(94,203)(112,222)
\put(102,212){\vector(1,1){2}}
\put(83,184){\makebox(0,0)[c]{$A(\alpha_3)$}}
\put(105,200){\makebox(0,0)[c]{$\bullet$}}  
\bezier150(105,200)(120,209)(135,224)
\put(125,214){\vector(1,1){2}}
\put(112,190){\makebox(0,0)[c]{$A(\alpha_4)$}}
\put(140,200){\makebox(0,0)[c]{$\bullet$}}
\bezier150(140,200)(155,209)(170,224)
\put(160,214){\vector(1,1){2}}
\put(182,214){\makebox(0,0)[c]{$\vp(\cdot,\alpha_5)$}}
\put(147,190){\makebox(0,0)[c]{$A(\alpha_5)$}}
\put(91,214){\vector(1,1){2}}
\end{picture}
   \caption{Global field of extremals: $A(\alpha)=(\vp(a,\alpha),\vp_x(a,\alpha))$,
   $B(\alpha)=(\vp(b,\alpha),\vp_x(b,\alpha))$,
  $(b-a)/2=\alpha_1<b-a-\eps=\alpha_2<\alpha_3<\alpha_4=b-a+\eps<\alpha_5$.}
   \label{fig10a}
\end{figure}
%----------

%------------------------------------------------------------
Assume first $\alpha\geq0$.
Then we choose the extremals $u^\alpha:=\vp(\cdot,\alpha)$ in the global field 
such that $\vp(\cdot,\alpha)$ is the solution of the Du Bois-Reymond equation
%with $(\vp(a,\alpha),\vp_x(a,\alpha))=A(\alpha)$, where
satisfying $$(\vp(a,\alpha),\vp_x(a,\alpha))=A(\alpha),$$ where
$A(\alpha)=(A_1(\alpha),A_2(\alpha)):(0,\infty)\to\R^2$ is smooth,
\begin{equation} \label{Z1}
A(\alpha)=\begin{cases}
 (u^0(a+\alpha),(u^0)'(a+\alpha)) &\hbox{ if }\alpha\leq b-a-\eps,\\
 (u^0(b)+\alpha-(b-a),2) &\hbox{ if }\alpha\geq b-a+\eps,
\end{cases}\end{equation}
\begin{equation} \label{Z2}
A_1'(\alpha)\geq1,\quad A_2'(\alpha)>0
\quad\hbox{for }\ \alpha\in(b-a-\eps,b-a+\eps),\ \hbox{ where }\ \eps\in(0,(b-a)/2),
\end{equation} 
see Figure~\ref{fig10a}.
Notice that $A_1'(b-a-\eps)=(u^0)'(b-\eps)>p_2>1$,
$A_2'(b-a-\eps)=(u^0)''(b-\eps)>0$,
$A_1'(b-a+\eps)=1$,
$A_2'(b-a+\eps)=0$,
$A_1(b-a+\eps)-A_1(b-a-\eps)>2\eps$
(since $A_1(b-a+\eps)=u^0(b)+\eps$, $A_1(b-a-\eps)=u^0(b-\eps)$,
$u^0(b)-u^0(b-\eps)=(u^0)'(b-\theta\eps)\eps>p_2\eps$),
$A_2(b-a+\eps)>A_2(b-a-\eps)$, so that 
\eqref{Z2} can be satisfied.
 
Let us show that $\vp_\alpha>0$. 
Since $\vp(x,\alpha)=u^0(x+\alpha)$ for $\alpha\leq b-a-\eps$
and $(u^0)'>0$, we may assume $\alpha>b-a-\eps$, hence $\vp>0$. 
Set $w(x,\alpha)=\vp_\alpha(x,\alpha)$. 
Then \eqref{Z1}--\eqref{Z2} imply $w(a,\alpha)\geq1$.
Let $h^{-1}$ denote the inverse of the increasing function
$h|_{(p_2,\infty)}$. Since $\vp(\cdot,\alpha)$ solves the Du Bois-Reymond
equation, there exists $C(\alpha)$
such that $\vp(x,\alpha)^2=C(\alpha)+h(\vp_x(x,\alpha))$.
Consequently,
\begin{equation} \label{eq-w}
 w_x=\frac{\partial}{\partial x}(\vp_\alpha)
    =\frac{\partial}{\partial\alpha}(\vp_x)
  =\frac{\partial}{\partial\alpha}(h^{-1}(\vp^2-C(\alpha))
  =\underbrace{(h^{-1})'(\vp^2-C(\alpha))}_{>0}[2\vp w-C'(\alpha)]. 
\end{equation}
If $w_x(a,\alpha)>0$ (which is true for $\alpha<b-a+\eps$ due to
\eqref{Z1}--\eqref{Z2}), then 
$\varphi_x>0$ and \eqref{eq-w} guarantee $w_x(x,\alpha)>0$ for $x>a$,
hence $w(x,\alpha)\geq w(a,\alpha)\geq1$.
If $w_x(a,\alpha)=0$ (which is true for $\alpha\geq b-a+\eps$ due to
\eqref{Z1}), then $(2\vp w)(a,\alpha)=C'(\alpha)$ and
$$\frac{d}{dx}(2\vp w-C'(\alpha))(a,\alpha)=2\vp_x w+2\vp w_x=2\vp_x w>2p_2>0 $$
hence $w_x(x,\alpha)>0$ for $x>a$ close to $a$,
and \eqref{eq-w} implies $w_x(x,\alpha)>0$ for all $x>a$.
As before, this implies $w(x,\alpha)\geq1$.

If $\alpha<0$, then the choice of $\vp(\cdot,\alpha)$ is symmetric:
The extremal $\vp(\cdot,\alpha)$ solves the Du Bois-Reymond equation in $[a,b]$
and $(\vp(b,\alpha),\vp_x(b,\alpha))=B(\alpha):=(-A_1(-\alpha),A_2(-\alpha))$.
%--------------------------------------------------

As an alternative to the technical construction of the global field above,
we could also set $(\vp(a,\alpha),\vp_x(a,\alpha))=A(\alpha)$, where
$$A(\alpha)=\begin{cases}
 (u^0(a+\alpha),(u^0)'(a+\alpha)) &\hbox{ if }0\leq\alpha\leq b-a,\\
 (u^0(b)+\alpha-(b-a),2) &\hbox{ if }\alpha>b-a,
\end{cases}$$
and analogously for $\alpha<0$.
Then the field $\vp(\cdot,\alpha)$ is not sufficiently smooth if
$|\alpha|=b-a$, but a simple generalization of Theorem~\ref{thm-field1}
shows that this does not matter. 
In fact, denote $v^\pm:=\vp(\cdot,\pm(b-a))$.
Let $u\in C^1([a,b])$; we want to show $\Phi(u)\geq\Phi(u^0)$.
Approximating $u$ suitably, we may assume that the set
$\{x\in[a,b]:u(x)=v^+(x)\hbox{ or }u(x)=v^-(x)\}$ 
is finite. Set $\tilde u:=\max(v^-,\min(v^+,u))$
and approximate $\tilde u$ by a sequence of $C^1$-functions $u^k$
such that $\hbox{graph}(u^k)\subset\c{P}_1:=\{(x,\vp(x,\alpha)):x\in[a,b],\ |\alpha|\leq b-a\}$
and $u^k\to\tilde u$ in $W^{1,4}$. Then Theorem~\ref{thm-field1}
shows that $\Phi(u^k)\geq\Phi(u^0)$, hence $\Phi(\tilde u)\geq\Phi(u^0)$
due to the continuity of $\Phi$ in $W^{1,4}$.
Let $[x_1,x_2]$ be any maximal interval where $\tilde u=v^+$ (i.e.~$u\geq v^+$)
or $\tilde u=v^-$. Notice that either $x_1=a$ or $u(x_1)=v^\pm(x_1)$,
and either $x_2=b$ or $u(x_2)=v^\pm(x_2)$.
Denote $\Phi_{x_1}^{x_2}(u)=\int_{x_1}^{x_2}f(x,u(x),u'(x))\,dx$.
Then the proof of Theorem~\ref{thm-field1} shows 
$\Phi_{x_1}^{x_2}(u)\geq\Phi_{x_1}^{x_2}(v^+)$ (if $u\geq v^+$ in $[x_1,x_2]$) or 
$\Phi_{x_1}^{x_2}(u)\geq\Phi_{x_1}^{x_2}(v^-)$,
hence $\Phi(u)\geq\Phi(\tilde u)\geq\Phi(u^0)$.
\end{proof}
%---------------------------------------------------

\section{Appendix} \label{sec-app}

\begin{proof}[Proof of Proposition~\ref{prop-NBC}]
We will consider only the special case $N=1$, $\ID_a=\emptyset$, $\IN_b=\emptyset$,
but the arguments in our proof can also be used in the general case.
 
If $h\in C^1_\c{D}=\{\varphi\in C^1([a,b]):\varphi(b)=0\}$, then
integration by parts yields
\begin{equation} \label{eq-Euler} 
 \begin{aligned} 
0 &=\Phi'(u^0)h=\int_a^b (f^0_p(x)h'(x)+f^0_u(x)h(x))\,dx \\
  &= gh\Big|_a^b + \int_a^b (f^0_p(x)-g(x))h'(x)\,dx,
\end{aligned}
\end{equation}
where $g(x):=\int_a^x f^0_u(\xi)\,d\xi$ is $C^1$.
Considering test functions $h$ with compact support in $(a,b)$,
the Du Bois-Reymond Lemma and \eqref{eq-Euler} yield the existence of 
a constant $C$ such that
$f^0_p(x)=g(x)+C$, hence $f^0_p\in C^1$ and the Euler equation
$\frac{d}{dx}(f^0_p)=f^0_u$ is satisfied.
This equation and the choice of $h$ with $h(a)=1$ in \eqref{eq-Euler} imply
$$  \begin{aligned} 
0 &=\Phi'(u^0)h=\int_a^b (f^0_p(x)h'(x)+f^0_u(x)h(x))\,dx \\
 &= f^0_ph\Big|_a^b +\int_a^b\Bigl(-\frac{d}{dx}(f^0_p(x))+f^0_u(x)\Bigr)h(x)\,dx =
-f^0_p(a), 
\end{aligned} $$ 
which concludes the proof of the first part.
If $f_p\in C^1$ and $f^0_{pp}\geq c^0>0$, then the function
$F(x,p):=f_p(x,u^0(x),p)-g(x)-C$ is $C^1$, 
$F(x,(u^0)'(x))=0$, $F_p(x,(u^0)'(x))>0$,
hence the Implicit Function Theorem implies $u^0\in C^2$.
\end{proof}

\begin{proof}[Proof of Proposition~\ref{prop-ws}]
The proof is based on an idea due to \cite{BN}.

Let $u^0\in C^1$ be a weak minimizer of $\Phi$ in $u^0+C^1_\c{D}$.
Assume first that there exist
$v^k\in W^{1,2}_\c{D}$, $k=1,2,\dots$, such that $r_k:=\|v^k\|_{1,2}\to0$
and $\Phi(u^0+v^k)<\Phi(u^0)$. Since $\Phi$ is weakly lower semicontinuous in
$W^{1,2}$,
there exists a minimizer $u^k$ of $\Phi$
in the set $\{u\in u^0+W^{1,2}_\c{D}:\|u-u^0\|_{1,2}\leq r_k\}$, hence
$\Phi(u^k)\leq\Phi(u^0+v^k)<\Phi(u^0)$.
Set $\Theta(u):=\|u-u^0\|_{1,2}^2$. Then there exists a Lagrange multiplier
$\lambda_k$ such that $\Phi'(u^k)h=\lambda_k\Theta'(u^k)h$
for any $h\in W^{1,2}_\c{D}$
(where the derivatives are considered in $W^{1,2}$).
Since $\Phi'(u^k)(u^k-u^0)\leq0$, we have $\lambda_k\leq0$.
Standard theory 
implies that $u^0,u^k\in C^2$ solve the Euler equation
$$ 2(1-\lambda_k)(u^k)''=g'(u^k)-2\lambda_k((u^0)''+u^k-u^0), $$
which shows that the sequence $u^k$ is bounded in $C^2$.
Since $u^k\to u^0$ in $W^{1,2}$, the boundedness in $C^2$ implies $u^k\to u^0$ in $C^1$
which contradicts the fact, that $u^0$ is a weak minimizer.
Consequently, $u^0$ is a local minimizer in $u^0+W^{1,2}_\c{D}$.

Next assume that there exist $v^k\in C^1_\c{D}$ such that $\|v^k\|_C\to0$
and $\Phi(u^0+v_k)<\Phi(u^0)$. 
Since $\Phi'(u^0)h=\int_a^b(2((u^0)'-K)h'+g'(u^0)h)\,dx=0$ for $h\in C^1_\c{D}$,
we have
$$ \begin{aligned}
0 <\Phi(u^0)-\Phi(u^0+v^k) &= \int_a^b[((u^0)'-K)^2-((u^0)'+(v^k)'-K)^2]\,dx + o(1) \\
&=-\int_a^b(v^k)'[(v^k)'+2((u^0)'-K)]\,dx +o(1) \\
&=-\|v^k\|_{1,2}^2+\int_a^b g'(u^0)v^k\,dx + o(1) = -\|v^k\|_{1,2}^2+o(1),
\end{aligned}
$$
hence $v^k\to 0$ in $W^{1,2}$, which yields a contradiction.
Consequently, $u^0$ is a strong minimizer.
\end{proof}

\begin{proof}[Proof of Proposition~\ref{prop-psi}]
Assume that $\Psi(h)\geq c\|h\|_{1,2}^2$ for some $c>0$ and all $h\in W^{1,2}_{\c{D}}$
and recall that $\Psi(h)=\Phi''(u^0)(h,h)$ if $h\in C^1$. 
If $u^1$ is close $u^0$ in $C^1$
and $\Psi^1$ denotes the functional $\Psi$ with $u^0$ replaced by $u^1$,
then one can easily check that $\Psi^1(h)=\Phi''(u^1)(h,h)\geq\frac c2\|h\|_{1,2}^2$ 
for $h\in C^1_{\c{D}}$, and the Mean Value Theorem implies
the existence of $\theta\in(0,1)$ such that
$$\Phi(u^0+h)-\Phi(u^0)=\frac12\Phi''(u^0+\theta h)(h,h)\geq \frac c4\|h\|_{1,2}^2$$
whenever $h\in C^1_{\c{D}}$ is small enough. Consequently, $u^0$ is a strict weak minimizer
in $u^0+C^1_\c{D}$.
 
If $\Psi(h)<0$ for some $h\in W^{1,2}_{\c{D}}$, then
the density of $C^1_{\c{D}}$ in $W^{1,2}_{\c{D}}$
and the continuity of $\Psi$ in $W^{1,2}_{\c{D}}$
guarantee the existence of $\tilde h\in C^1_{\c{D}}$ such that
$0>\Psi(\tilde h)=\Phi''(u^0)(\tilde h,\tilde h)$, which shows that $u^0$ is not a
weak minimizer $u^0+C^1_\c{D}$.
\end{proof}

%-----------------------------------
\begin{proof}[Proof of Theorem~\ref{thm-field}(iii)]
First assume that $\IN_a=\emptyset$. 
If $\IN_b=\emptyset$, then the assertion is well known (see \cite{GF} or \cite{GH}, for
example), hence we may assume $\IN_b\ne\emptyset$. Our assumptions imply
$D\ne 0$ in $(a,b]$
and $\c{B}h(b)\cdot h(b)>0$ for any $h\in H_{\c{D},b}\setminus\{0\}$. 
We may also assume that $f$ is defined and of class $C^3$ 
in an open neighbourhood of  $\{(x,u^0(x),(u^0)'(x)):x\in[a,b]\}$ in $\R\times\R^N\times\R^N$
(see \cite{Bed} for a detailed proof if $N=1$). Consequently, there exists $\eps>0$ small such that
$u^0$ can be extended (as an extremal) for $x\in[a-\eps,a]$, $f^0$ satisfies \eqref{f-conv} in $[a-\eps,b]$,
and the solutions $h^{(k)}$, $k=1,2,\dots,N$ of the Jacobi equation in $[a-\eps,b]$
with initial conditions $h^{(k)}(a-\eps)=0$, $(h_i^{(k)})'(a-\eps)=\delta_{ik}$, satisfy $D>0$ in $(a-\eps,b]$
and $\c{B}h(b)\cdot h(b)>0$ for any $h\in H_{\c{D},b}\setminus\{0\}$ 
due to the continuous dependence of solutions of ODEs on initial values. 
Let $\vp(\cdot,\alpha)$ be the extremal satifying the initial conditions
$\vp(a-\eps,\alpha)=u^0(a-\eps)$, $\vp_x(a,\alpha)=(u^0)'(a-\eps)+\alpha$.
The arguments in \cite{GF, GH} guarantee that such extremals define a field of extremals for $u^0$
(in $[a,b]$) satisfying \eqref{self-adjoint}.
Condition \eqref{field-a} is empty, hence we only have to show that \eqref{field-b} is true.
Thus assume that $v-u^0(b)\in\R^N_{\c{D},b}\cap B_\eps\setminus\{0\}$.
We have $v=\vp(b,\alpha)$ for some $\alpha$ small.
Set $h^\alpha:=\sum_k\alpha_kh^{(k)}$.
If $i\in\ID_b$, then 
$0=\vp_i(b,\alpha)-u^0_i(b)=h^\alpha_i(b)+o(\alpha)$, hence $h^\alpha=h^{\tilde\alpha}+o(\alpha)$
for some $h^{\tilde\alpha}\in H_{\c{D},b}\setminus\{0\}$ and $\tilde\alpha=\alpha+o(\alpha)$.
Since our assumptions imply 
$\c{B}h^{\tilde\alpha}(b)\cdot h^{\tilde\alpha}(b)=\sum_{i\in\IN_b}\c{B}_ih^{\tilde\alpha}(b)h_i^{\tilde\alpha}(b)>0$,
we also have
$$\begin{aligned}
f_p(b,v,\psi(b,v))\cdot(v-u^0(b)) 
&=\sum_{i\in\IN_b}f_{p_i}(b,\vp(b,\alpha),\vp_x(b,\alpha))(\vp_i(b,\alpha)-u^0_i(b)) \\
&=\sum_{i\in\IN_b}\bigl(\c{B}_ih^\alpha(b)+o(\alpha)\bigr)\bigl(h^\alpha_i(b)+o(\alpha)\bigr) \\
&=\sum_{i\in\IN_b}\bigl(\c{B}_ih^{\tilde\alpha}(b)+o(\tilde\alpha)\bigr)\bigl(h^{\tilde\alpha}_i(b)+o(\tilde\alpha)\bigr)>0.  
\end{aligned}$$

Next assume $\ID_a=\emptyset$.
Since our proof in this case uses similar arguments as in the case $\IN_a=\emptyset$
(and a very detailed proof in the case $N=1$ can be found in \cite{Bed}),
we will be brief. 
Given $\alpha\in\R^N$ small and $v=v(\alpha):=u^0(a)+\alpha$, the Implicit Function Theorem implies
the existence of a unique $w=w(\alpha)\in\R^N$ close to $(u^0)'(a)$ such $f_p(a,v(\alpha),w(\alpha))=0$.
Let $\vp(\cdot,\alpha)$ be the extremal satifying the initial conditions
$\vp(a,\alpha)=v(\alpha)$, $\vp_x(a,\alpha)=w(\alpha)$.
We claim that such extremals $\vp(\cdot,\alpha)$ define the required field.
In fact, the function $P$ in Definition~\ref{def-field} is a $C^1$-diffeomorphism and $\vp_x\in C^1$
due to the differentiablity of solutions of ODEs on initial values    
and the fact that $h^{(k)}:=\frac{\partial\vp}{\partial\alpha_k}(\cdot,0)$, $k=1,\dots,N$, are linearly
independent solutions of the Jacobi equation $\c{A}h=0$ satisfying the initial conditions $\c{B}h(a)=0$,
hence $\det(h^{(1)},\dots,h^{(N)})\ne0$ in $[a,b]$ due to our assumptions.
Properties \eqref{self-adjoint} and \eqref{field-a} follow from $f_{p}(a,v,\psi(a,v))=0$
and the proof of \eqref{field-b} is the same as in the case $\IN_a=\emptyset$.

Finally assume \eqref{special}.
Let $h^{(1)},\dots,h^{(N)}$ be solutions
of the Jacobi equation $\c{A}h=0$ in $[a,b]$
satisfying the initial conditions
$$\begin{aligned}
h^{(k)}_i(a)&=\eta\delta_{ik}&\quad&\hbox{for $k\in \ID_a$, $i\in I$,}\\
h^{(k)}_i(a)&=\delta_{ik}&\quad&\hbox{for $k\in \IN_a$, $i\in I$,}
\end{aligned}\qquad
\begin{aligned}
(h^{(k)}_i)'(a)&=\delta_{ik}&\quad&\hbox{for $k\in I$, $i\in \ID_a$,}\\
\c{B}_ih^{(k)}(a)&=0&\quad&\hbox{for $k\in I$, $i\in \IN_a$,}
\end{aligned} $$
where $\eta\in[0,1]$.
If $\zeta\geq0$ is small, then 
$$ \begin{aligned}
  h^{(k)}_i(a+\zeta)&=(\eta+\zeta)\delta_{ik}+o(\zeta) &\quad&\hbox{if }k,i\in\ID_a,\\
  h^{(k)}_i(a+\zeta)&=\delta_{ik}+O(\zeta) &\quad&\hbox{otherwise},
\end{aligned}$$
hence $D(x):=\det(h^{(1)}(x),\dots,h^{(N)}(x))>0$ for $x\in[a,a+\zeta]$
and $\eta\in(0,1]$. 
If $\eta=0$, then
our assumptions imply $D(x)>0$ for $x\in[a+\zeta,b]$ and
$\c{B}h(b)\cdot h(b)>0$ for any $h:=\sum_k\beta_k h^{(k)}$ satisfying
$h_i(b)=0$ for $i\in\ID_b$ and $h\not\equiv0$.
Those properties remain true for $\eta>0$ small and we fix such $\eta>0$.
Set
$v_i(\alpha)=u^0_i(a)+\eta\alpha_i$ if $i\in\ID_a$, 
$v_i(\alpha)=u^0_i(a)+\alpha_i$ if $i\in\IN_a$
and $w_i(\alpha)=(u^0_i)'(a)+\alpha_i$ if $i\in\ID_a$.
The Implicit Function Theorem gurantees that there exist unique $w_i(\alpha)$ for $i\in\IN_a$
(close to $(u^0_i)'(a)$)
such that $f_{p_i}(a,v(\alpha),w(\alpha))=0$ for $i\in\IN_a$ and $\alpha$ small.
Let $\vp(\cdot,\alpha)$ be extremals satisfying the initial conditions
$\vp(a,\alpha)=v(\alpha)$, $\vp_x(a,\alpha)=w(\alpha)$.
Then $\vp_{\alpha_k}(a,0)=h^{(k)}(a)$
and $\vp_{x\alpha_k}(a,0)=(h^{(k)})'(a)$, 
which shows that these extremals define a field of extremals for $\alpha$ small.
The same arguments as above guarantee that
properties \eqref{field-a},\eqref{field-b} are satisfied.
Let us show that \eqref{self-adjoint} is true.
If $i,j\in\IN_a$, then this follows from $f_{p_i}(a,v,\psi(a,v))=f_{p_j}(a,v,\psi(a,v))=0$.
Let $i\in\ID_a$. If $j\in\IN_a$, then the left-hand side in \eqref{self-adjoint} is zero
due to $f_{p_iu_j}=f_{p_ip_j}=0$.
If $j\in\ID_a$, then that left-hand side equals
$f_{p_iu_j}(a,v,\psi(a,v))+\sum_{k\in I}f_{p_ip_k}(a,v,\psi(a,v))\psi_{k,v_j}(a,v)$.
Since $f_{p_iu_j}=f_{p_ju_i}$, $f_{p_ip_k}(a,v,\psi(a,v))=0$ for $k\in\IN_a$
and $\psi_{k,v_j}(a,v)=\frac1\eta\delta_{kj}$ if $k\in\ID_a$,
we see that that left-hand side equals to the right-hand side.
\end{proof}

\begin{proof}[Proof of Proposition~\ref{prop-HII}]
If $w=(w_1,\dots,w_N)$ depends on $\theta$, then we denote $w_{i,\theta}:=\frac{\partial w_i}{\partial\theta}$.
By differentiating the identity $\vp_x(x,\alpha)=\psi\bigl(x,\vp(x,\alpha)\bigr)$ we obtain
$$ \vp_{j,xx}=\psi_{j,x}+\sum_k\psi_{j,v_k}\vp_{k,x}=\psi_{j,x}+\sum_k\psi_{j,v_k}\psi_k.$$
If we substitute this relation into the Euler equations 
$$ \sum_j (f_{p_ip_j}\vp_{j,xx}+f_{p_iu_j}\vp_{j,x})+f_{p_ix}-f_{u_i}=0,$$
(where the arguments of the derivatives of $f$ and $\vp$ are $\bigl(x,\vp(x,\alpha),\vp_x(x,\alpha)\bigr)$ and $(x,\alpha)$, respectively),
then we obtain
\begin{equation} \label{Hilb1}
 \sum_j (f_{p_ip_j}(\psi_{j,x}+\sum_k\psi_{j,v_k}\psi_k)+f_{p_iu_j}\psi_j)+f_{p_ix}-f_{u_i}=0,
\end{equation}
where the arguments of the derivatives of $f$ and $\psi$ are $\bigl(x,v,\psi(x,v)\bigr)$ and $(x,v)$, respectively.
For $(x,v)\in\c{P}$ we set
\begin{equation} \label{M1M2}
\begin{aligned}
 V(x,v) &:= f\bigl(x,v,\psi(x,v)\bigr)-f_p\bigl(x,v,\psi(x,v)\bigr)\cdot\psi(x,v), \\
 W(x,v) &:= f_p\bigl(x,v,\psi(x,v)\bigr).
\end{aligned}
\end{equation}
We claim that
\begin{equation} \label{self-adjoint-x}
(W_{i,v_j}-W_{j,v_i})(x,v)
=\frac{\partial f_{p_i}(x,v,\psi(x,v))}{\partial v_j}
-\frac{\partial f_{p_j}(x,v,\psi(x,v))}{\partial v_i}=0, \quad i,j\in I. 
\end{equation}
In fact, if $f$ and $\vp$ are of class $C^3$, then setting $v=\vp(x,\alpha)$
and $\psi(x,v)=\vp_x(x,\alpha)$ in \eqref{self-adjoint-x},
the Euler equations imply that
the $d/dx$-derivative of the resulting expression vanishes,
hence the conclusion follows from \eqref{self-adjoint}.
Such argument can also be used without the additional smoothness assumptions on $f,\vp$,
see the proof of \cite[Proposition 6.1.1.4]{GH}.

Now \eqref{self-adjoint-x} and \eqref{Hilb1} imply 
$V_v=W_x$.
This fact and \eqref{self-adjoint-x} guarantee the existence of
$S\in C^2(\c{P})$ such that $S_x=V$ and $S_v=W$.
Finally,
$$ \begin{aligned}
I(v)&=\int_a^b(V+W\cdot v')\,dx=\int_a^b(S_x+S_v\cdot v')\,dx= 
  \int_a^b\frac{d}{dx}S\bigl(x,v(x)\bigr)\,dx \\
&= S\bigl(b,v(b)\bigr)-S\bigl(a,v(a)\bigr).\end{aligned}$$
\end{proof}

\begin{remark}\label{rem-ZZ}\rm
Necessary and sufficient conditions for weak minimizers in
\cite{Zei, Zez93} are formulated in terms of (semi-)coupled points
and seem to be more complicated than our conditions.
In order to compare them, let us consider the scalar case with
variable endpoints (i.e.~$\ID_a=\ID_b=\emptyset$),
and let $h$ be the solution of the Jacobi equation
satisfying the initial conditions $h(a)=1$, $\c{B}h(a)=0$.
Let us also denote $Q:=f^0_{uu}$.
Then our sufficient condition for a weak minimizer in Theorem~\ref{thm-Jacobi1}
is equivalent to
\begin{equation} \label{our}
h(y)\ne0\ \hbox{ for }\ y\in(a,b]\quad \hbox{ and }\quad \c{B}h(b)>0,
\end{equation}
while the sufficient condition for a weak minimizer in \cite{Zei, Zez93}
is equivalent to
\begin{equation} \label{their}
-\c{B}h(y)\ne\Bigl(\int_y^b Q\Bigr)h(y)\ \hbox{ for }\ y\in(a,b]\quad \hbox{ and }\quad \int_a^b Q>0.
\end{equation}
The proofs of the sufficiency guarantee that \eqref{our} is equivalent to \eqref{their}.
Let us show this equivalence directly: For simplicity, consider just
Lagrangians of the form $2f(x,u,p)=p^2+Q(x)u^2$. Then $\c{B}h=h'$ and the Jacobi equation has the form
$h''=Qh$. Let $h$ be the solution of this equation with initial conditions $h(a)=1$, $h'(a)=0$.

First assume that \eqref{our} is true.
Then integration by parts yields
\begin{equation} \label{Q1}
\int_a^b Q=\int_a^b\frac{h''}h=\frac{h'}h\Big|_a^b+\int_a^b\frac{(h')^2}{h^2}>0.
\end{equation}
Assume to the contrary that $-h'(y)=(\int_y^b Q)h(y)$ for some $y\in(a,b]$.
Then
\begin{equation} \label{Q2}
 -\int_y^b Q =\frac{h'(y)}{h(y)}= \frac{h'}h\Big|_a^y=\int_a^y\Bigl(\frac{h''}h-\frac{(h')^2}{h^2}\Bigr)
 =\int_a^y\Bigl(Q-\frac{(h')^2}{h^2}\Bigr).
\end{equation}
Now \eqref{Q2} and \eqref{Q1} imply
$$\int_a^b Q=\int_a^y\frac{(h')^2}{h^2}<\frac{h'}h\Big|_a^b+\int_a^b\frac{(h')^2}{h^2}=\int_a^b Q,$$
which yields a contradiction.

Next assume that \eqref{our} fails, i.e. either $h(y)=0$ for some $y\in(a,b]$
or $h'(b)\leq0$, and assume also to the contrary \eqref{their} is true.
If $h(y)=0$ for some $y\in(a,b]$ and $h>0$ on $[a,y]$, then $h'(y)<0$, hence 
$$ \begin{aligned}
 -h'(a) &=0<\Bigl(\int_a^b Q\Bigr)h(a), \\
 -h'(y) &>0=\Bigl(\int_y^b Q\Bigr)h(y),
\end{aligned}$$
so that there exists $z\in(a,y)$ such that $-h'(z)=\bigl(\int_z^b Q\bigr)h(z)$,
which yields a contradiction.
If $h>0$ and $h'(b)\leq0$, then
$$ \begin{aligned}
 -h'(a) &=0<\Bigl(\int_a^b Q\Bigr)h(a), \\
 -h'(b) &\geq 0=\Bigl(\int_b^b Q\Bigr)h(b),
\end{aligned}$$
so that there exists $z\in(a,b]$ such that $-h'(z)=\bigl(\int_z^b Q\bigr)h(z)$
and we arrive at contradiction again.

The proof above shows that if $y_1$ is the first (= smallest) zero of $h$, 
then the smallest solution $z_1$ of the equation
$-h'(z)=\bigl(\int_z^b Q\bigr)h(z)$ satisfies $z_1<y_1$.
The inequality $z_1\leq y_1$ also follows from the proof of Theorem~\ref{thm-Jacobi}
and the corresponding proof in \cite{Zez93}.
In fact, those proofs show that $y_1$ and $z_1$ correspond to the zeroes
of the continuous nonincreasing functions 
$\lambda_1(y)=\inf_{S_y}\Psi$
and
$\tilde\lambda_1(z)=\inf_{\tilde S_z}\Psi$, respectively,
where $S_y$ is the unit sphere in $X_y$ (see \eqref{lambda1}) and
$\tilde S_z$ is the unit sphere in
$\tilde X_z=\{h\in W^{1,2}([a,b]): h(x)=h(z)\hbox{ for }x\geq z\}$.
Since $X_y\subset \tilde X_y$ and the norm in $X_y$ is equivalent to the norm in $W^{1,2}$, 
we have 
$\tilde\lambda_1\leq \max\{C\lambda_1,0\}$.
\qed
\end{remark}

The following proposition is motivated by \cite{MR} and Section~\ref{sec-twist}.
Given $u^0\in C^1([a,b],\R^N)$, we will use the following notation (cf.~\eqref{M}):
$$ \begin{aligned}
 \c{M} &:=u^0+C^1_\c{D}=\{u\in C^1([a,b]): (u_i-u^0_i)(x)=0 \hbox{ for }i\in\ID_x\hbox{ and }x\in\{a,b\}\}, \\ 
 \c{M}_\c{N} &:= \{u\in\c{M}: u_i'(x)=0\hbox{ for }i\in\IN_x\hbox{ and }x\in\{a,b\}\}.
\end{aligned}$$

\begin{proposition} \label{prop-Neumann}
Let $f\in C^1$ and let $u^0$ be a weak minimizer of $\Phi$ in $\c{M}_\c{N}$.
Then $u^0$ is a weak minimizer in $\c{M}$.
Conversely, if $u$ is a weak minimizer in $\c{M}$
and $u^0\in\c{M}_\c{N}$, then
$u^0$ is a weak minimizer in $\c{M}_\c{N}$.
\end{proposition}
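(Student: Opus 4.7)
The converse holds trivially since $\c{M}_\c{N}\subset\c{M}$: a weak minimizer on the larger set remains one on the subset, provided it lies in it.

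For the main direction, I would approximate each perturbation $v\in C^1_\c{D}$ of $u^0$ (with $\|v\|_{C^1}$ small) by a nearby perturbation $\tilde v_\delta$ inside the test space of $\c{M}_\c{N}$ via a narrow boundary-supported correction. For each small $\delta>0$, fix $C^1$ cut-offs $\phi_{a,\delta},\phi_{b,\delta}$ supported in $[a,a+\delta]$ and $[b-\delta,b]$ respectively, each vanishing at both endpoints of its support, with $\phi_{a,\delta}'(a)=\phi_{b,\delta}'(b)=1$, and with $C^1$-norms bounded uniformly in $\delta$ (take e.g.~$\phi_{a,\delta}(x)=(x-a)(1-(x-a)/\delta)^2$ on $[a,a+\delta]$). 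Define
\[
 r_\delta:=\sum_{i\in\IN_a}v_i'(a)\phi_{a,\delta}e_i+\sum_{i\in\IN_b}v_i'(b)\phi_{b,\delta}e_i,\qquad \tilde v_\delta:=v-r_\delta.
\]
Because $\phi_{a,\delta},\phi_{b,\delta}$ vanish at $\{a,b\}$, $\tilde v_\delta$ still satisfies the Dirichlet conditions inherited from $v$, and by design $(\tilde v_\delta)_i'(x)=0$ for $i\in\IN_x$, so $u^0+\tilde v_\delta\in\c{M}_\c{N}$. Moreover $\|\tilde v_\delta\|_{C^1}\leq C\|v\|_{C^1}$ uniformly in $\delta$, so for $\|v\|_{C^1}$ small enough the hypothesis yields $\Phi(u^0+\tilde v_\delta)\geq\Phi(u^0)$ for every such $\delta$.

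Applying the fundamental theorem of calculus to $s\mapsto\Phi(u^0+\tilde v_\delta+sr_\delta)$ on $[0,1]$,
\[
 \Phi(u^0+v)-\Phi(u^0+\tilde v_\delta)=\int_0^1\!\int_a^b\bigl[f_u(x,u^s,(u^s)')\cdot r_\delta+f_p(x,u^s,(u^s)')\cdot r_\delta'\bigr]\,dx\,ds
\]
with $u^s:=u^0+\tilde v_\delta+sr_\delta$. Since $\|u^s-u^0\|_{C^1}\leq C\|v\|_{C^1}$ is small, $(x,u^s,(u^s)')$ stays in a fixed compact set on which $f_u,f_p$ are bounded by some $M<\infty$, and the right-hand side is bounded in modulus by $M\int(|r_\delta|+|r_\delta'|)\,dx=O(\delta\|v\|_{C^1})$ (the cut-offs' narrow support absorbs their $O(1)$ amplitude). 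Combining with $\Phi(u^0+\tilde v_\delta)\geq\Phi(u^0)$, we get $\Phi(u^0+v)\geq\Phi(u^0)-c(v)\delta$ for every small $\delta>0$ with $c(v)$ independent of $\delta$; letting $\delta\to 0^+$ yields $\Phi(u^0+v)\geq\Phi(u^0)$.

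The sole delicacy is this scale-balance in the construction: $\|\phi_{\cdot,\delta}\|_{C^1}=O(1)$ uniformly in $\delta$ (so $\tilde v_\delta$ stays in the minimizer's $C^1$-neighborhood regardless of $\delta$), whereas the support has length $O(\delta)$ (so the residual integrals vanish with $\delta$). Notably, no appeal to the Euler equation or the natural boundary conditions for $u^0$ is needed; the argument works for $f\in C^1$ as stated.
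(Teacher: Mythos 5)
Your proof is correct and follows essentially the same strategy as the paper's: both enforce the Neumann conditions by modifying the competitor on a boundary layer of width $\delta\to0$ while keeping the $C^1$-distance to $u^0$ bounded by a fixed multiple of the original (so the modified function stays in the minimizer's $C^1$-neighborhood), and then pass to the limit using that the energy change over the shrinking layer vanishes. The only cosmetic differences are that you use an explicit additive cutoff correction and a fundamental-theorem-of-calculus estimate, treating the general vector-valued case directly, whereas the paper redefines the derivative by interpolation in a scalar special case and invokes the convergence $\Phi(u^k)\to\Phi(u)$ along the approximating sequence.
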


\begin{proof}
For simplicity,
we will prove the assertion only in the special case $N=1$, $\ID_a=\emptyset$, $\IN_b=\emptyset$,
but it will be clear from the proof that our arguments can also be used in the general case.

Hence assume first that $u^0$ is a weak minimizer of $\Phi$ in 
$$\c{M}_\c{N}=\{u\in C^1([a,b]): (u-u^0)(b)=0,\ u'(a)=0\}.$$
Then there exists $\eps>0$ such that $u^0$ is a (global) minimizer of $\Phi$ in the set 
$$\c{M}_\c{N}^\eps:=\{u\in\c{M}_\c{N}:\|u-u^0\|_{C^1}<\eps\}.$$
We will show that $u^0$ is a (global) minimizer in the set $\c{M}^{\eps/4}$, where
$$\c{M}^\eps:=\{u\in\c{M}:\|u-u^0\|_{C^1}<\eps\},$$
hence $u^0$ is a weak minimizer of $\Phi$ in 
$\c{M}=\{u\in C^1([a,b]): (u-u^0)(b)=0\}$.
 
Fix $u\in\c{M}^{\eps/4}$.
Since $(u^0)'(a)=0$,
given $k\in\N$, there exists
$\delta_k\in(0,1/k)$ such that 
$$|(u^0)'(x)|<1/k\ \hbox{ for }\ x\in J_k:=[a,a+\delta_k].$$
Since $\|u-u^0\|_{C^1}<\eps/4$, we also have 
$|u'(x)|<\eps/4+1/k$ for $x\in J_k$.
Consequently, we can modify the function $u$ in $J_k$
such that the modified function $u^k\in C^1([a,b])$ satisfies $u^k=u$ on $[a+\delta_k,b]$,
$(u^k)'(a)=0$ and $|(u^k)'(x)|<\eps/4+1/k$ for $x\in J_k$
(for example, we can choose $(u^k)'(x)=u'(\delta_k)(x-a)/(\delta_k-a)$ for $x\in J_k$).
Then 
$$|(u^k)'-(u^0)'|\leq|(u^k)'|+|(u^0)'|<\eps/4+2/k \ \hbox{ on }\ J_k$$ 
and the Mean Value Theorem implies
$$|u^k-u^0|\leq|u^k-u|+|u-u^0|<\max_{J_k}|(u^k-u)'|\delta_k+\eps/4<(\eps/2+2/k)/k+\eps/4\ \hbox{ on }\ J_k,$$
hence $u^k\in \c{M}_\c{N}^\eps$ for $k$ large, which implies $\Phi(u^k)\geq\Phi(u^0)$.
Since $\Phi(u^k)\to\Phi(u)$,  we have $\Phi(u)\geq\Phi(u^0)$. 

The converse assertion is trivial.
\end{proof}

\begin{remark} \label{rem-Neumann} \rm
In \cite[Propositions~5 and~6]{MR}
the authors consider the function $u^0$ and the functional $\Phi$
from our Section~\ref{sec-twist}, and they
provide conditions guaranteeing that $u^0$
is a weak minimizer subject to the Neumann boundary conditions
for some of its components (see \eqref{MR-N} and \eqref{MR-ND} above).  
Proposition~\ref{prop-Neumann} shows that the Neumann boundary conditions
do not play any role in such assertions, i.e. 
$u^0$ remains a weak minimizer if we replace ``the Neumann boundary conditions''
with ``no boundary conditions''.
Consequently (see Proposition~\ref{prop-NBC}), $u^0$
then has to satisfy the corresponding natural boundary conditions
(instead of the Neumann boundary conditions).
The Neumann boundary conditions are different from
the natural boundary coditions in general,
but the first two components of the function $u^0$ in Section~\ref{sec-twist}
satisfy both the Neumann and the natural boundary conditions.
\qed
\end{remark}

%---------------------------------------------------
\section*{Acknowledgements}
This work was supported in part by VEGA grant 1/0245/24.
The author thanks the anonymous referee for many helpful comments.

%-------------------------------------------------

\end{document}